\def\today{\number\day\space\ifcase\month\or   January\or February\or
   March\or April\or May\or June\or   July\or August\or September\or
   October\or November\or December\fi\   \number\year}
\theoremstyle{plain}
\newtheorem{lma}{Lemma}[section]
\newaliascnt{thmCt}{lma}
\newtheorem{thm}[thmCt]{Theorem}
\newaliascnt{corCt}{lma}
\newtheorem{cor}[corCt]{Corollary}
\newaliascnt{propCt}{lma}
\newtheorem{prop}[propCt]{Proposition}
\newtheorem*{thm*}{Theorem}
\newtheorem*{cor*}{Corollary}
\newtheorem*{prop*}{Proposition}
\newcounter{theoremintro}
\newtheorem{thmintro}[theoremintro]{Theorem}
\newtheorem{corintro}[theoremintro]{Corollary}
\theoremstyle{definition}
\newtheorem{quesintro}[theoremintro]{Question}
\newaliascnt{pgrCt}{lma}
\newaliascnt{dfCt}{lma}
\newtheorem{df}[dfCt]{Definition}
\newaliascnt{remCt}{lma}
\newtheorem{rem}[remCt]{Remark}
\newaliascnt{remsCt}{lma}
\newaliascnt{egCt}{lma}
\newaliascnt{egsCt}{lma}
\newaliascnt{qstCt}{lma}
\newaliascnt{pbmCt}{lma}
\newaliascnt{notaCt}{lma}
\theoremstyle{theorem}
\newcommand{\beq}{\begin{equation}}
\newcommand{\eeq}{\end{equation}}
\newcommand{\beqa}{\begin{eqnarray*}}
\newcommand{\eeqa}{\end{eqnarray*}}
\newcommand{\bal}{\begin{align*}}
\newcommand{\eal}{\end{align*}}
\newcommand{\bi}{\begin{itemize}}
\newcommand{\ei}{\end{itemize}}
\newcommand{\be}{\begin{enumerate}}
\newcommand{\ee}{\end{enumerate}}
\newcommand{\cU}{\mathcal{U}}
\newcommand{\ep}{\varepsilon}
\newcommand{\N}{{\mathbb{N}}}
\newcommand{\K}{{\mathcal{K}}}
\newcommand{\U}{{\mathcal{U}}}
\numberwithin{equation}{section}
\newcommand{\Aut}{{\mathrm{Aut}}}
\newcommand{\rk}{\mathrm{rk}}
\newcommand{\cstar}{$C^*$}
\newcommand{\I}{\infty}
\title[Dynamical comparison and $\mathcal{Z}$-stability for crossed products]{Dynamical comparison and $\mathcal{Z}$-stability for crossed products of simple $C^*$-algebras}
\thanks{
The first named author was supported by a starting grant of 
the Swedish Research Council.
The second named author was supported by the project G085020N 
funded by the Research Foundation Flanders (FWO), and by the generosity of Eric and Wendy Schmidt by recommendation of the Schmidt Futures program. The third named author was partially supported by the Deutsche Forschungsgemeinschaft (DFG, German Research Foundation) under Germany's Excellence Strategy ``EXC 2044'' 390685587, Mathematics M\"unster ``Dynamics, Geometry and Structure'', and under Project-ID 427320536, SFB 1442; and ERC Advanced Grant 834267 - AMAREC.
The fourth named author is supported by European Union's Horizon 2020 research and innovation programme under the Marie Sk\l{}odowska-Curie grant agreement No.~891709. 
}
\author[Gardella]{Eusebio Gardella}
\address[Eusebio Gardella]
{Department of Mathematical Sciences, Chalmers University of
Technology and University of Gothenburg, Gothenburg SE-412 96, Sweden.}
\email{gardella@chalmers.se}
\urladdr{www.math.chalmers.se/~gardella}
\author[Geffen]{Shirly Geffen}
\address[Shirly Geffen]
{Department of Mathematics, KU Leuven, Celestijnenlaan 200B, 3001 Leuven, Belgium.}
\email{shirly.geffen@kuleuven.be}
\urladdr{https://shirlygeffen.com/}
\author[Naryshkin]{Petr Naryshkin}
\address[Petr Naryshkin]
{Mathematisches Institut, Fachbereich Mathematik und Informatik der
Universit\"at M\"unster, Einsteinstrasse 62, 48149 M\"unster, Germany.}
\email{pnaryshk@uni-muenster.de}
\urladdr{http://petrnaryshkin.wordpress.com}
\author[Vaccaro]{Andrea Vaccaro}
\address[Andrea Vaccaro]
{Institut de Math\'ematiques de Jussieu (IMJ-PRG)\\
Universit\'e de Paris\\
B\^atiment Sophie Germain\\
8 Place Aur\'elie Nemours \\ 75013 Paris, France.}
\email{vaccaro@imj-prg.fr}
\urladdr{https://sites.google.com/view/avaccaro/home}
\begin{document}

\begin{abstract}
We establish $\mathcal{Z}$-stability for crossed products
of outer actions of amenable groups on $\mathcal{Z}$-stable $C^*$-algebras 
under a mild technical assumption which we call \emph{McDuff property with respect to invariant traces}.
We obtain such result using a weak form of dynamical comparison, which we verify in great generality. We complement our results by proving that McDuffness with respect to invariant traces is automatic in many cases of interest. This is the case, for instance, for every action of an amenable group $G$
on a classifiable $C^*$-algebra $A$ whose trace space $T(A)$ is a Bauer simplex with finite dimensional boundary $\partial_e T(A)$, and such that the induced action $G \curvearrowright \partial_eT(A)$ is free. If $G = \mathbb{Z}^d$ and the action
$G \curvearrowright \partial_eT(A)$ is free and minimal, then we obtain McDuffness with respect to invariant traces, and thus $\mathcal{Z}$-stability of the corresponding crossed product, also in case $\partial_e T(A)$ has infinite covering dimension.

\end{abstract} 

\maketitle

\tableofcontents

\renewcommand*{\thetheoremintro}{\Alph{theoremintro}}

\section{Introduction}

One of the highlights in the development of $C^*$-algebra theory in the last 
couple of decades
has been the tremendous progress in the Elliott classification 
programme. 
There is now a very satisfactory and nearly final classification theorem,
which in this form is the combination of the Kirchberg-Phillips
classification of purely infinite $C^*$-algebras \cite{Phi_classification_2000}, Elliott-Gong-Lin-Niu's classification
of simple $C^*$-algebras of finite nuclear dimension \cite{EGLN},
and the Tikuisis-White-Winter quasidiagonality theorem 
\cite{TikWhiWin_quasidiagonality_2017} (these results build upon a large body of previous works; we refer to \cite{winter18}
for a complete bibliography on the topic). We reproduce the classification 
theorem below:

\begin{thm*}
Let $A$ and $B$ be simple, separable, unital, nuclear, $\mathcal Z$-stable 
$C^*$-algebras satisfying the Universal Coefficient Theorem (UCT).
Then $A\cong B$ if and only if $\mathrm{Ell}(A)\cong \mathrm{Ell}(B)$.
\end{thm*}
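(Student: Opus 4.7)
The plan is to split into two cases according to whether $A$ and $B$ are purely infinite or stably finite, and then invoke the three pillars cited in the excerpt. First I would appeal to R{\o}rdam's dichotomy: a simple, unital, $\mathcal{Z}$-stable $C^*$-algebra is either purely infinite or stably finite. Since the Elliott invariant records the tracial simplex, which is empty in the purely infinite case and nonempty in the stably finite case, the hypothesis $\mathrm{Ell}(A)\cong\mathrm{Ell}(B)$ forces $A$ and $B$ to lie on the same side of the dichotomy, so the two cases can be treated separately.

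In the purely infinite case, the Kirchberg--Phillips classification \cite{Phi_classification_2000} applies directly: simple, separable, unital, nuclear, purely infinite $C^*$-algebras satisfying the UCT are classified by $K$-theory together with the class of the unit, which is exactly what $\mathrm{Ell}(A)$ reduces to when $T(A)=\varnothing$. This yields $A\cong B$ immediately.

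In the stably finite case, the strategy is to invoke the Elliott--Gong--Lin--Niu theorem \cite{EGLN}, which classifies simple, separable, unital, nuclear $C^*$-algebras of finite nuclear dimension in the UCT class, assuming each faithful trace is quasidiagonal. Two extra inputs are needed. First, finite nuclear dimension has to be extracted from $\mathcal{Z}$-stability; this is the Toms--Winter-type equivalence, now a theorem under the present hypotheses through the combined efforts of Winter, Matui--Sato, Sato--White--Winter, Bosa--Brown--Sato--Tikuisis--White--Winter, and Castillejos--Evington--Tikuisis--White--Winter. Second, the Tikuisis--White--Winter theorem \cite{TikWhiWin_quasidiagonality_2017} supplies the quasidiagonality hypothesis of EGLN, since every faithful trace on a separable, nuclear $C^*$-algebra satisfying the UCT is quasidiagonal, and simplicity guarantees that every nonzero trace is faithful.

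The main obstacle in this outline, granting that the three pillars named in the excerpt may be quoted verbatim, is really the Toms--Winter-type equivalence $\mathcal{Z}$-stability $\Leftrightarrow$ finite nuclear dimension. It is the deepest input, relying on delicate analysis of central sequence algebras and property (SI), and it is what allows EGLN to be applied in a framework phrased in terms of $\mathcal{Z}$-stability rather than nuclear dimension. Without that equivalence, one would have to rework the internal structure theory of EGLN directly in the $\mathcal{Z}$-stable setting, which is historically the route through which the final form of the classification theorem took shape.
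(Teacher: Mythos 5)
Your proposal is correct and coincides with how the paper itself accounts for this theorem: the paper gives no proof, but presents the statement precisely as the combination of the Kirchberg--Phillips classification in the purely infinite case, the Elliott--Gong--Lin--Niu classification in the finite case, and the Tikuisis--White--Winter quasidiagonality theorem, with the remaining glue (R{\o}rdam's dichotomy and the $\mathcal{Z}$-stability $\Rightarrow$ finite nuclear dimension implication of the Toms--Winter conjecture) deferred to the surrounding literature. Your identification of the Toms--Winter-type equivalence as the deepest auxiliary input needed to phrase the result in terms of $\mathcal{Z}$-stability is accurate.
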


The Elliott invariant $\mathrm{Ell}(A)$ of a $C^*$-algebra $A$ consists
of its $K$-theory and trace space (together with the natural
pairing between them). The theorem above is virtually optimal: except for unitality and 
for the possibly redundant requirement on the UCT, all other conditions
on the $C^*$-algebras are needed in order to obtain a classification
result in terms of the Elliott invariant. We will say that a $C^*$-algebra is \emph{classifiable} if it satisfies the assumptions of the classification
theorem above.

The present work is concerned with group actions on classifiable 
$C^*$-algebras, and we are interested in knowing when the classifiable
class is closed under formation of reduced crossed products by countable,
discrete groups. Explicitly,
let $A$ be a classifiable $C^*$-algebra, let $G$ be a countable, discrete,
amenable\footnote{This question is also interesting for nonamenable groups, although
a completely different set of techniques seems to be needed to analyze it 
in that setting; see \cite{GGKNV_tracial_2022} for some recent results in this direction.} group, and let $\alpha\colon G\to\Aut(A)$ be an action. 
When is $A\rtimes_\alpha G$ again classifiable?

Some of the conditions listed in the classification theorem are automatically
satisfied by $A\rtimes_\alpha G$; this is the case for unitality, separability, and nuclearity.
Simplicity is preserved whenever $\alpha_g$ is outer for all $g\in G\setminus\{e\}$, as proved by Kishimoto in \cite{KishOut}. 
Preservation of the UCT is somewhat more subtle: it is automatic if $G$ is
torsion free, but already for finite groups, preservation of the UCT is
equivalent to the UCT problem \cite{BarSza_cartanII_2020}. 
In particular, we have a good understanding of the preservation of all properties except for 
$\mathcal{Z}$-stability, which is the main focus of the present work. 

\begin{quesintro}\label{qstA}
Let $A$ be a simple, separable, unital, nuclear, $\mathcal Z$-stable 
$C^*$-algebra, let $G$ be a countable, discrete,
amenable group, and let $\alpha\colon G\to\Aut(A)$ be an outer action. 
When is $A\rtimes_\alpha G$ again $\mathcal{Z}$-stable?
\end{quesintro}

Partial results addressing this question have been obtained by a number of 
authors \cite{HO, szabo:equivariantKP, matuisato:1, matuisato:2,
sato, szabo:equiSI, HW07, HWZ15, SWZ19, G, GL, GHV, Wouters, AminiGolestaniJamaliPhillips2022}. 

Closely related to this topic is \cite[Conjecture A]{szabo:equiSI}, where it has been conjectured that an action 
as in Question~\autoref{qstA} ought to automatically tensorially
absorb the identity on $\mathcal{Z}$ up to cocycle equivalence; this property is 
often referred to as \emph{equivariant $\mathcal{Z}$-stability} for 
$\alpha$. It is immediate to check that the crossed product of an 
equivariantly $\mathcal{Z}$-stable action is 
$\mathcal{Z}$-stable. As a consequence, confirming such conjecture would imply that 
$\mathcal{Z}$-stability is automatically preserved for crossed products as in Question~\autoref{qstA}. Indeed, in most cases where crossed products have been shown to be $\mathcal{Z}$-stable, this was obtained by directly
proving equivariant $\mathcal{Z}$-stability of the action.

More precisely, equivariant $\mathcal{Z}$-stability has been verified in full generality for actions as in Question~\autoref{qstA} in case the $C^*$-algebra $A$
is purely infinite \cite{szabo:equivariantKP}. In the stably finite case, Matui and Sato's seminal work \cite{matuisato:1,
matuisato:2} started a rich literature of partial results (see \cite{sato, szabo:equiSI, GHV, Wouters}) all relying on the additional assumptions that the
extreme boundary $\partial_eT(A)$ of the tracial states space $T(A)$ is compact and 
finite-dimensional. It seems that the technology currently available
is not sufficient to push these results any further, and we have been so far unable to establish
equivariant $\mathcal{Z}$-stability whenever
$\partial_eT(A)$ fails to be finite-dimensional and compact. 

We overcome this obstacle by introducing the notion of \emph{McDuff property with respect to invariant traces} (\autoref{def:wuMc}), a technical condition
for actions on $C^*$-algebras which allows us to obtain $\mathcal{Z}$-stability of 
$A\rtimes_\alpha G$ in situations where equivariant $\mathcal{Z}$-stability
is far from being established.

Our definition originates from the
\emph{uniform McDuff property} (\autoref{df:McDuff}). This notion was formally introduced in \cite[Definition 4.2]{cetw}, but it
implicitly appeared in numerous earlier papers where $\mathcal{Z}$-stability was derived from strict comparison in the framework of the Toms--Winter Conjecture, starting with Matui and Sato's work
\cite{ms:stric} (see also \cite{tww, KirchRor_centr, lin:trosc}). Broadly speaking,
a unital $C^*$-algebra $A$ has the uniform McDuff property if, for any $d \in \mathbb{N}$, there exist completely positive order zero maps
$\varphi\colon M_d \to A$ with approximately central range and such that the remainder $1 - \varphi(1)$ is arbitrarily small with respect
to all traces of $A$.
An equivariant version of this definition, requiring in addition that the ranges of these maps are approximately invariant,
has proven to be an invaluable tool in showing equivariant $\mathcal{Z}$-stability,
and its (sometimes implicit) presence can be traced in most works related to the topic \cite{matuisato:1, matuisato:2, sato, GHV, Wouters}. As we mentioned,
the main issue with this equivariant analogue of uniform McDuffness is that, with the theory that we currently have at our disposal, we can only verify it for actions on $C^*$-algebras whose trace space is a Bauer simplex with finite-dimensional boundary.

Given a $C^*$-algebra $A$ with $T(A)\neq \emptyset$, and an action $\alpha \colon G \to \text{Aut}(A)$, the definition of McDuff property with respect to invariant traces requires the
existence of completely positive order zero maps $\varphi\colon M_d \to A$ with approximately central and approximately invariant range,
similarly to the aforementioned equivariant McDuffness. However, it differs from the latter in that it only prescribes for $1 - \varphi(1)$ to be small with respect to invariant traces, rather than all of them. This distinction is not a 
minor one, as it makes McDuffness with respect to invariant traces easier to check (see \autoref{s.URP}, Corollary \ref{cor:E} and Corollary~\autoref{cor:Z}), but at the same time it produces a tool specifically suited
to verify $\mathcal{Z}$-stability of 
$A\rtimes_\alpha G$, rather than equivariant $\mathcal{Z}$-stability of $\alpha$, as clear from our first main result.

\begin{thmintro}[\autoref{thm:main2}] \label{thmintro.B}
Let $G$ be a countable, discrete, amenable group, let $A$ be a simple, separable, stably finite, unital, nuclear, $\mathcal{Z}$-stable
\cstar-algebra, and 
let $\alpha \colon G \to \mathrm{Aut}(A)$ be an outer 
action such that $(A,\alpha)$ has the McDuff property with respect to invariant traces. 
Then $A \rtimes_\alpha G$ is a simple,
separable, unital, nuclear, $\mathcal{Z}$-stable
$C^*$-algebra. 
\end{thmintro}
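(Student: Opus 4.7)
The simplicity, separability, unitality and nuclearity of $A \rtimes_\alpha G$ are standard: simplicity follows from Kishimoto's theorem since $\alpha$ is outer, while the remaining properties pass to reduced crossed products by amenable groups. The core of the statement is $\mathcal{Z}$-stability, which I would attack via the Matui--Sato circle of ideas: for a simple, separable, unital, nuclear, stably finite $C^*$-algebra with non-empty trace space, the combination of uniform McDuffness with a sufficient form of comparison of positive elements implies $\mathcal{Z}$-stability.

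The first main step is to deduce the uniform McDuff property of $A \rtimes_\alpha G$ from the McDuff property with respect to invariant traces of $(A, \alpha)$. The essential ingredient is the natural affine bijection $T(A \rtimes_\alpha G) \cong T(A)^\alpha$ induced by the canonical conditional expectation $E \colon A \rtimes_\alpha G \to A$. Given $d \in \mathbb{N}$, a finite subset $F \subseteq A \rtimes_\alpha G$, a finite $S \subseteq G$ with associated implementing unitaries $\{u_g\}_{g \in S}$, and $\varepsilon > 0$, the hypothesis yields a cp order zero map $\varphi \colon M_d \to A$ whose image is approximately central in $A$ (against $E(F)$), approximately invariant under $\{\alpha_g\}_{g \in S}$, and satisfies $\tau(1 - \varphi(1)) < \varepsilon$ for every $\tau \in T(A)^\alpha$. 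Approximate invariance is precisely approximate commutation with the unitaries $u_g$ inside $A \rtimes_\alpha G$, and combined with approximate centrality in $A$ this shows that $\varphi(M_d)$ is approximately central against $F$. Since $1 - \varphi(1) \in A$, the tracial smallness transfers through $E$ to all of $T(A \rtimes_\alpha G)$, which yields the uniform McDuff property for $A \rtimes_\alpha G$ after a standard reindexing argument.

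The second main step is to establish the form of comparison for $A \rtimes_\alpha G$ needed to complete the passage to $\mathcal{Z}$-stability. This is where the paper's weak dynamical comparison results, derived from $\mathcal{Z}$-stability (and hence strict comparison) of $A$ together with amenability of $G$, would come into play. With uniform McDuffness of $A \rtimes_\alpha G$ in hand, one feeds both ingredients into a Matui--Sato property (SI) argument, or one of its refinements by Castillejos--Evington--Tikuisis--White--Winter, to conclude that $A \rtimes_\alpha G$ tensorially absorbs $\mathcal{Z}$. The principal obstacle I would expect is precisely this second step: previous approaches demanded strong compactness and finite-dimensionality hypotheses on $\partial_e T(A)$, and it is the novelty of the paper's weak dynamical comparison, established in much greater generality, that removes these restrictions and makes the final assembly go through.
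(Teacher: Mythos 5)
Your first step is essentially correct and matches the paper's intent: traces on $A\rtimes_\alpha G$ restrict (via the conditional expectation) to elements of $T(A)^\alpha$, so the order zero maps $M_d\to A$ furnished by the McDuff property with respect to invariant traces are approximately central in $A\rtimes_\alpha G$ and tracially large there; this does give uniform McDuffness of the crossed product. The gap is in your second step. Feeding uniform McDuffness into a Matui--Sato property (SI) argument, or into the Castillejos--Evington--Tikuisis--White implication ``uniform property $\Gamma$ $+$ strict comparison $\Rightarrow$ $\mathcal{Z}$-stability,'' requires strict comparison (or property (SI)) \emph{for the crossed product} $A\rtimes_\alpha G$, i.e.\ comparison of arbitrary positive elements of $(A\rtimes_\alpha G)\otimes\mathcal{K}$. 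Nothing in the hypotheses provides this, and weak dynamical comparison does not: it only compares elements of $A\otimes\mathcal{K}$ inside the crossed product, tested against invariant traces. Establishing strict comparison of $A\rtimes_\alpha G$ directly is precisely the kind of statement one normally deduces \emph{from} $\mathcal{Z}$-stability, so as written your plan is circular at this point.

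The missing idea is to aim instead for Hirshberg--Orovitz tracial $\mathcal{Z}$-stability (\autoref{def:trZst}), which for a simple, separable, unital, nuclear algebra is equivalent to $\mathcal{Z}$-stability. There the test element is a single nonzero $b\in(A\rtimes_\alpha G)_+$, and the Kishimoto-type \autoref{lma:Kishnoncom} (using outerness) replaces it by a nonzero $a\in A_+$ with $a\precsim b$; one then only ever needs to compare $1-\rho(1)$ with $a$, both elements of $A$, inside the crossed product --- exactly what weak dynamical comparison (\autoref{thm:wComp}) delivers. Two further points you elide: the comparison hypothesis requires $d_\tau(1-\rho(1))$ (not merely $\tau(1-\rho(1))$) to be small on $T(A)^\alpha$, which is arranged by the order zero functional calculus in \autoref{prop:avgord0}; and one needs $\inf_{\tau\in T(A)^\alpha}d_\tau(a)>0$, which follows from simplicity of $A$ and lower semicontinuity of the rank function. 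With these ingredients the proof closes as in \autoref{thm:main1}.
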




Our strategy is to show that $A\rtimes_\alpha G$ is \emph{tracially
$\mathcal{Z}$-stable} (see \autoref{def:trZst}), and then take advantage of nuclearity to conclude $\mathcal{Z}$-stability
\cite[Theorem 4.1]{HO}. Tracial $\mathcal{Z}$-stability requires the existence of completely positive order zero maps
$\varphi\colon M_d \to A$ with approximately central range and such that the value $1 - \varphi(1)$ is suitably small in the sense of
Cuntz subequivalence. If $A\rtimes_\alpha G$ is a crossed product as in the statement of Theorem~\autoref{thmintro.B}, then every trace on
it restricts to some element in $T(A)^\alpha$, the set of $\alpha$-invariant traces on $A$. Thanks to this observation,
it is possible to check that if $(A,\alpha)$ has the McDuff property with respect to invariant traces, then $A \rtimes_\alpha G$ is uniformly McDuff (with maps
whose image are contained in $A$).
This does not quite imply tracial $\mathcal{Z}$-stability of $A \rtimes_\alpha G$, as it only gives almost central order zero maps $M_d \to A \rtimes_\alpha G$
with tracially large range, rather than large in the sense of Cuntz subequivalence (i.e.\ as in item \ref{item:Z2} of \autoref{def:trZst}).

In order to fill this gap, a comparison argument is needed.
This led us to the notion of \emph{weak dynamical comparison} (\autoref{df:wdc}), a key property inspired by the upcoming work \cite{BPWZ_dynamical_2022}, which allows to
compare (in the Cuntz sense) elements from $A$ inside of 
$A\rtimes_\alpha G$\footnote{Of course it would suffice to compare them in $A$, but if we could do 
this, we would be able to prove equivariant $\mathcal{Z}$-stability of 
$\alpha$ and 
not just $\mathcal{Z}$-stability of $A\rtimes_\alpha G$.} only by
knowing that they compare on invariant traces. 
The fact that weak dynamical comparison holds in great generality is our other main result, and we 
believe it will see other applications in the 
future: 

\begin{thmintro}[{\autoref{cor:thmB}}]\label{thmB}
Let $G$ be a countable, discrete, amenable group and let $A$ be a simple, separable, unital, exact, $\mathcal{Z}$-stable \cstar-algebra.
Then every action of $G$ on $A$ has \emph{weak dynamical comparison}. In particular, this implies that
if $a,b\in (A\otimes \mathcal{K})_+$ with $b\neq 0$ satisfy
\[\max_{\tau\in T(A)^\alpha} d_\tau(a) < 
 \min_{\tau\in T(A)^\alpha} d_\tau(b),
\]
then $a\precsim b$ in $A\rtimes_\alpha G$.
 \end{thmintro}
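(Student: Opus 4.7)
The plan is to combine strict comparison in $A$ (which follows from $\mathcal{Z}$-stability by R\o{}rdam's theorem) with amenability of $G$ in order to lift Cuntz comparisons from $A$ to the crossed product. The crucial feature of the crossed product structure is that the canonical unitaries satisfy $u_g a u_g^* = \alpha_g(a)$, so that $a$ and every translate $\alpha_g(a)$ determine the same class in the Cuntz semigroup of $A\rtimes_\alpha G$. Via standard stabilization and cut-down arguments, I would first reduce to the case where $a$ is a positive contraction in $A$ and $b\in A_+$; it then suffices to show $(a-\varepsilon)_+\precsim b$ in $A\rtimes_\alpha G$ for some small $\varepsilon>0$, chosen so that the strict inequality in the hypothesis still holds with $(a-\varepsilon)_+$ in place of $a$.

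Next I would exploit amenability by averaging. Pick a sufficiently invariant F\o{}lner set $F\subset G$; for every $\tau\in T(A)$ the functional $\tau_F=|F|^{-1}\sum_{g\in F}\tau\circ\alpha_g$ is almost $\alpha$-invariant, and a weak* compactness argument combined with a standard dimension function perturbation then shows that $d_{\tau_F}(\cdot)$ is within a prescribed $\eta>0$ of $d_\sigma(\cdot)$ for some invariant $\sigma\in T(A)^\alpha$, \emph{uniformly} in $\tau\in T(A)$. Using the strict inequality in the hypothesis, one arranges that $d_{\tau_F}((a-\varepsilon)_+)<d_{\tau_F}(b)$ with a fixed margin for \emph{every} $\tau\in T(A)$, not merely for invariant ones.

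The hard part is a tiling-style construction inside $A\rtimes_\alpha G$, in the spirit of \cite{BPWZ_dynamical_2022}. The goal is to produce, via $\mathcal{Z}$-stability of $A$, pairwise orthogonal positive elements $\{b_g\}_{g\in F}\subset A$ with $\sum_{g\in F}b_g\precsim b$ in $A$ and such that $\alpha_g((a-\varepsilon)_+)\precsim b_g$ in $A$ for each $g\in F$. To build this decomposition one uses an almost central order-zero map $M_{|F|}\to A$ coming from $\mathcal{Z}$-stability, which carves $b$ into $|F|$ orthogonal pieces of comparable tracial size; the averaged-trace estimates from the previous step then feed into strict comparison in $A$ to yield the desired subequivalences inside each piece. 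A patching argument in $A\rtimes_\alpha G$, using the unitaries $u_g$ to transport each relation to a subequivalence $(a-\varepsilon)_+\precsim u_g^*b_gu_g$ sitting on a distinct matrix corner, then assembles a single witness for $(a-\varepsilon)_+\precsim b$ in $A\rtimes_\alpha G$.

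The main obstacle is this last step: simultaneously constructing the orthogonal fibers $b_g$ with Cuntz bounds on \emph{arbitrary} (not just invariant) traces and patching together the corner-wise comparisons inside the crossed product. This is where $\mathcal{Z}$-stability of $A$ and the F\o{}lner averaging for $G$ must be combined carefully, and it relies crucially on the strict gap $\max_{\tau\in T(A)^\alpha}d_\tau(a)<\min_{\tau\in T(A)^\alpha}d_\tau(b)$ providing enough slack to absorb both the averaging error and the splitting error.
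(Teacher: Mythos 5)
Your overall strategy---F\o{}lner averaging of traces, strict comparison in $A$, and using the canonical unitaries to identify $x$ with $\alpha_g(x)$ in the Cuntz semigroup of the crossed product---is exactly the strategy of the paper (this is what the notion of dynamical subequivalence, \autoref{df:DynSubEq}, packages). However, two of your key steps have genuine gaps. First, the averaging step: you claim that $d_{\tau_F}(\cdot)$ is uniformly close to $d_\sigma(\cdot)$ for some invariant $\sigma$. The function $\tau\mapsto d_\tau(a)$ is only \emph{lower} semicontinuous, so when $\tau_{F_n}$ converges weak* to an invariant trace $\sigma$ you only get $\liminf_n d_{\tau_{F_n}}(a)\geq d_\sigma(a)$; this is the wrong direction for the upper bound you need on the $a$-side (it does work for the lower bound on the $b$-side). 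The paper circumvents this by first interpolating: \autoref{ContFuncBetween} produces a \emph{continuous} affine $f$ with $\mathrm{rk}((a-\ep)_+)<f<\mathrm{rk}(a)$, and the compactness argument is then run on $f$, where continuity makes it valid. This is also where the hypothesis that all ranks are realized (via stable rank one of stably finite $\mathcal{Z}$-stable algebras) enters, since one must realize $\tfrac{1}{|F|}f$ as the rank of an actual element.

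Second, and more seriously, your ``hard part'' is off by a factor of $|F|$. If $\sum_{g\in F}b_g\precsim b$ with the $b_g$ pairwise orthogonal and $\alpha_g((a-\ep)_+)\precsim b_g$ for every $g\in F$, then applying any $\tau\in T(A)$ gives
\[
d_\tau(b)\;\geq\;\sum_{g\in F}d_\tau\bigl(\alpha_g((a-\ep)_+)\bigr)\;=\;|F|\cdot\frac{1}{|F|}\sum_{g\in F}d_{\tau\circ\alpha_g}\bigl((a-\ep)_+\bigr),
\]
and since the averaged quantity is bounded below by $\inf_{\tau}d_\tau((a-\ep)_+)>0$ (simplicity plus lower semicontinuity), this forces $d_\tau(b)\gtrsim|F|$, which is impossible for large F\o{}lner sets when $b$ has bounded rank. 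The correct bookkeeping compares $(a-\ep)_+$ with a direct sum $\oplus_{g\in F}x$ of $|F|$ copies of a \emph{single small} element $x$ with $\mathrm{rk}(x)=\tfrac{1}{|F|}f$, so the total rank is $f$, only slightly larger than $\mathrm{rk}((a-\ep)_+)$; the translated sum $\oplus_{g\in F}\alpha_{g^{-1}}(x)$ then has rank equal to the F\o{}lner average of $f$, which is uniformly below a constant $c$. One also cannot reach $b$ in a single such step: the paper inserts an intermediate element $y$ with constant rank $c$ and proves $a\precsim_0 y\precsim_0 b$ by two symmetric arguments. Relatedly, your final ``patching across matrix corners'' is not needed: once $(a-\ep)_+\precsim u_g^*b_gu_g$ in the crossed product for a single $g$, you are already done since $u_g^*b_gu_g\sim b_g\precsim b$; the real content lies in arranging the translated direct sums correctly, not in assembling corner-wise witnesses.
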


We complement these results by establishing McDuffness with respect to invariant traces in cases
of interest not covered in the previous literature in \cite{GHV, Wouters}.
To this end, we introduce the \emph{weak Rokhlin property with respect to invariant traces}, which is a weakening of the weak Rokhlin property considered in 
\cite{Arc_crossed_2008, HO, GarHirSan_rokhlin_2017, GHV}
in that the 
left-over of the towers is only assumed to be small with respect to all \emph{invariant}
traces, and not with respect to all traces (or in the sense of Cuntz 
comparison). We show in \autoref{lemma:cpcoz} that a dynamical system $(A,\alpha)$ verifying such Rokhlin-type condition satisfies the McDuff property with respect to invariant traces, provided that $A$ is uniformly McDuff. This is done via an averaging argument
over the elements of the towers. By Theorem~\autoref{thmintro.B},
it follows that the corresponding crossed
product is $\mathcal{Z}$-stable.

In \autoref{s.URP} we present some concrete instances where the weak Rokhlin property with respect to invariant traces is realized. We do so by 
taking advantage of some notions and ideas coming from
a line of research that has run parallel to the progress made in the 
setting of the Elliott programme, and has focused on the study of 
crossed products arising from topological dynamical systems. Partly motivated by the problem of
finding when such crossed products satisfy the assumptions of the 
classification theorem, this theory has recently seen some impressive breakthroughs, obtaining positive results even
for actions on topological spaces with infinite covering dimension
\cite{Ker_dimension_2020, KerSza_almost_2020, KerNar_elementary_2021, Nar_polynomial_2021, GGKN_paradoxical_2022,Niu_comparison_2019}.

Thanks to Ozawa's groundbreaking work in \cite{ozawa_dix}, if $(A,\alpha)$ is a dynamical system as in Question~\autoref{qstA}
such that $T(A)$ is a non-empty Bauer simplex, then the induced action on $\partial_eT(A)$ can be described, up to some error,
using approximately central elements in $A$. This fact permits to apply some of the techniques and results developed
for dynamical systems in the aforementioned papers, to actions on simple $C^*$-algebras. In particular, we show that if the induced action on $\partial_eT(A)$ has the so-called \emph{uniform Rokhlin Property} (\autoref{def:URP};
see also {\cite[Definition~3.1]{Niu_comparison_2022}), then $(A,\alpha)$ has the weak Rokhlin property with respect to invariant traces, and therefore the resulting crossed product is $\mathcal{Z}$-stable.


\begin{corintro}[\autoref{cor:uRp}]\label{corintro:uRp}
Let $G$ be a countable, discrete, amenable group, let $A$ be a simple,
separable, unital, nuclear, $\mathcal{Z}$-stable
\cstar-algebra, such that $T(A)$ is a non-empty Bauer simplex, and 
let $\alpha \colon G \to \mathrm{Aut}(A)$ be an action such
that $G \curvearrowright \partial_e T(A)$ has the uniform Rokhlin property. Then $A \rtimes_\alpha G$ is a simple,
separable, unital, nuclear, $\mathcal{Z}$-stable
$C^*$-algebra. If moreover $A \rtimes_\alpha G$ satisfies the UCT, then it is a classifiable
$C^*$-algebra.

\end{corintro}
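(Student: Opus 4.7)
The plan is to reduce the statement to an application of Theorem~\autoref{thmintro.B}, checking each hypothesis in turn. Separability, unitality, and nuclearity of $A \rtimes_\alpha G$ follow from the corresponding properties of $A$ together with the amenability of $G$. For simplicity, I would first argue that $\alpha$ must be outer: the uniform Rokhlin property of $G \curvearrowright \partial_e T(A)$ forces this induced action to be free, and since any inner automorphism fixes every trace, no nontrivial $\alpha_g$ can be inner. Kishimoto's theorem then yields simplicity of $A \rtimes_\alpha G$, and outerness is also exactly what Theorem~\autoref{thmintro.B} demands on the dynamics.

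The remaining hypothesis of Theorem~\autoref{thmintro.B} is the McDuff property with respect to invariant traces for $(A,\alpha)$. To verify it, I would invoke \autoref{lemma:cpcoz}, which produces this property from uniform McDuffness of $A$ together with the weak Rokhlin property with respect to invariant traces for $(A,\alpha)$. Uniform McDuffness of $A$ is a consequence of $\mathcal{Z}$-stability, so the task reduces to constructing Rokhlin towers in $A$ whose leftover has arbitrarily small value against every $\alpha$-invariant trace.

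Here the Bauer simplex assumption becomes essential. I would invoke Ozawa's theorem: for a simple, nuclear, $\mathcal{Z}$-stable \uca\ $A$ with $T(A)$ a Bauer simplex, the algebra of continuous affine functions on $T(A)$, which is naturally isomorphic to $C(\partial_e T(A))$, admits an equivariant unital embedding into the quotient of the central sequence algebra of $A$ by the trace-kernel ideal. Starting from a Rokhlin tower decomposition of $\partial_e T(A)$ furnished by the uniform Rokhlin property, I would lift the characteristic functions of the tower levels to approximately central, approximately invariant positive contractions in $A$. The leftover, by construction, corresponds to a small function on $\partial_e T(A)$; since every $\tau \in T(A)^\alpha$ is a barycenter of a probability measure on $\partial_e T(A)$ and tracial evaluation depends affinely on $\tau$, this smallness transfers uniformly from the compact set $\partial_e T(A)$ to the entire set $T(A)^\alpha$. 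The main obstacle lies precisely in this lifting step, where one must simultaneously control centrality, equivariance, and tracial smallness through Ozawa's ultrapower machinery while tracking the combinatorial data provided by the uniform Rokhlin property. Once Theorem~\autoref{thmintro.B} delivers $\mathcal{Z}$-stability, the final assertion about classifiability under the UCT hypothesis follows immediately from the classification theorem recalled in the introduction.
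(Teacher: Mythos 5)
Your overall route coincides with the paper's: use Ozawa's equivariant embedding of $C(\partial_e T(A))$ into $A^{\mathcal U}\cap A'$ to convert the open castles furnished by the uniform Rokhlin property into towers of positive contractions witnessing the weak Rokhlin property with respect to invariant traces (\autoref{prop:uRpiwRp}), then combine uniform McDuffness of $A$ (a consequence of $\mathcal Z$-stability) with \autoref{lemma:cpcoz} to obtain the McDuff property with respect to invariant traces, and conclude via \autoref{thm:main2}; classifiability under the UCT is then immediate. One point of execution: the ``lifting of characteristic functions'' must pass through continuous functions, since the Ozawa embedding is only defined on $C(\partial_e T(A))$. The paper first shrinks the bases $B_i$ to open sets $U_i$ with $\overline{U_i}\subseteq B_i$ without losing the measure estimate (\autoref{lemma:URPclosed}) and then feeds Urysohn functions, equal to $1$ on $\overline{U_i}$ and supported in $B_i$, into the embedding; the smallness of the leftover against invariant traces then follows exactly as you indicate, because the representing measure of an invariant trace is an invariant measure on $\partial_e T(A)$.

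The one genuine gap is your derivation of outerness, which rests on the claim that the uniform Rokhlin property forces $G\curvearrowright\partial_e T(A)$ to be free. This implication is proved nowhere in the paper and is doubtful in general: the uniform Rokhlin property only controls the castle up to sets that are small for \emph{every invariant measure}, so a point with nontrivial stabilizer whose orbit closure carries invariant measures concentrated on free points is not obviously an obstruction. What you actually need is much weaker: if $\alpha_g$ were inner for some $g\neq e$, it would fix every trace and hence act as the identity on all of $\partial_e T(A)$, and a group element acting trivially on the whole space \emph{is} incompatible with the uniform Rokhlin property (take $K=\{g,g^{-1}\}$ and $\varepsilon<1$; approximate invariance produces $s\in S_i$ with $gs\in S_i$ and $gs \neq s$, and disjointness of the levels $sB_i$ and $gsB_i$ forces $sy\neq gsy=g(sy)$ for $y\in B_i$, contradicting triviality of $g$ on $X$ once one notes that some $B_i$ is non-empty). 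Alternatively --- and this is what the paper does --- outerness follows directly from the weak Rokhlin property with respect to invariant traces via \autoref{lma:iwRpOuter}, which you establish anyway. The gap is therefore easily repaired, but the freeness claim should not be used as stated.
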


The uniform Rokhlin property follows from the \emph{small boundary property} \cite[Definition~5.1]{KerSza_almost_2020}, which automatically holds for all free actions of amenable groups on compact, metrizable, finite-dimensional spaces \cite[Theorem~3.8]{szaboRokDimZm}.
This fact, combined with Corollary~\autoref{corintro:uRp}, leads to the following corollary.

\begin{corintro} \label{cor:E}
Let $G$ be a countable, discrete, amenable group, let $A$ be a simple,
separable, unital, nuclear, $\mathcal{Z}$-stable
\cstar-algebra, such that $T(A)$ is a non-empty Bauer simplex and $\mathrm{dim}(\partial_eT(A)) < \infty$. Let $\alpha \colon G \to \mathrm{Aut}(A)$ be an action such that $G \curvearrowright \partial_e T(A)$ is free.  Then $A \rtimes_\alpha G$ is a simple,
separable, unital, nuclear, $\mathcal{Z}$-stable $C^*$-algebra.
\end{corintro}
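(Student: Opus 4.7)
The plan is to derive this statement directly from Corollary~\ref{corintro:uRp} by verifying that, under the stated hypotheses, the induced action $G\curvearrowright\partial_eT(A)$ has the uniform Rokhlin property. All the substantive work has already been done in the cited literature; the argument reduces to checking that $\partial_eT(A)$ falls into the class of spaces those results apply to, and then chaining the implications.

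First I would record the topological properties of $\partial_eT(A)$. Since $A$ is unital, $T(A)$ is weak${}^*$-compact; since $T(A)$ is a Bauer simplex, $\partial_eT(A)$ is closed in $T(A)$, hence compact. Separability of $A$ forces $T(A)$, and therefore $\partial_eT(A)$, to be metrizable. Combined with the hypothesis $\dim(\partial_eT(A))<\infty$, this places $\partial_eT(A)$ squarely in the class of compact, metrizable, finite-dimensional spaces. The action $G\curvearrowright\partial_eT(A)$ is free by assumption, so \cite[Theorem~3.8]{szaboRokDimZm} applies and yields the small boundary property for this action.

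Next, as recalled in the paragraph preceding Corollary~\ref{corintro:uRp}, the small boundary property in the sense of \cite[Definition~5.1]{KerSza_almost_2020} implies the uniform Rokhlin property of $G\curvearrowright\partial_eT(A)$. At this point all hypotheses of Corollary~\ref{corintro:uRp} are in place (the mild bookkeeping remark being that inner automorphisms act trivially on every trace, so freeness on $\partial_eT(A)$ already rules out any ambiguity coming from possibly inner $\alpha_g$). Invoking Corollary~\ref{corintro:uRp} then yields that $A\rtimes_\alpha G$ is simple, separable, unital, nuclear, and $\mathcal{Z}$-stable, as desired. I do not anticipate any genuine technical obstacle: the proof is essentially the assembly of the chain \emph{freeness $+$ finite dimension $\Rightarrow$ small boundary property $\Rightarrow$ uniform Rokhlin property $\Rightarrow$ $\mathcal{Z}$-stability of the crossed product}, with each link already supplied either by the cited works or by Corollary~\ref{corintro:uRp} itself.
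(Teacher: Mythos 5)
Your proof is correct and follows exactly the route the paper takes: freeness plus finite dimensionality of the compact metrizable space $\partial_eT(A)$ gives the small boundary property by \cite[Theorem~3.8]{szaboRokDimZm}, which yields the uniform Rokhlin property, and Corollary~\ref{corintro:uRp} then finishes the argument. The aside about inner automorphisms is unnecessary (Corollary~\ref{corintro:uRp} does not assume outerness; that is handled internally via the weak Rokhlin property), but it does no harm.
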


Note that Corollary \ref{cor:E} yields a weaker conclusion of \cite[Theorem B]{Wouters}, as it does not grant equivariant $\mathcal{Z}$-stability of the action. However, \cite[Theorem B]{Wouters} is restricted to groups with polynomial growth, while we cover all amenable groups.

The uniform Rokhlin property is conjectured to hold for all free, minimal actions of amenable groups, and is verified in \cite[Theorem~4.2]{Niu_comparison_2019} (see also \cite{lindenMeandim, lindenGutamTsukaMeandim})
for all free, minimal $\mathbb{Z}^d$-actions on compact Hausdorff spaces $X$, regardless of the dimension of $X$. 
As the UCT is preserved through crossed products of $\mathbb{Z}^d$-actions \cite{RS}, Corollary~\autoref{corintro:uRp} yields the following corollary.

\begin{corintro} \label{cor:Z}
Let $A$ be a simple,
separable, unital, nuclear, $\mathcal{Z}$-stable
\cstar-algebra such that $T(A)$ is a non-empty Bauer simplex, and 
let $\alpha\colon \mathbb{Z}^d\to \mathrm{Aut}(A)$ be an action such
that the induced action $\mathbb{Z}^d \curvearrowright \partial_e T(A)$ is free and minimal. Then $A \rtimes_\alpha \mathbb{Z}^d$ is a simple,
separable, unital, nuclear, $\mathcal{Z}$-stable
$C^*$-algebra. Moreover, if $A$ is classifiable, then so is $A \rtimes_\alpha \mathbb{Z}^d$.
\end{corintro}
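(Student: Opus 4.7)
The plan is to reduce to Corollary~\autoref{corintro:uRp} by exhibiting the uniform Rokhlin property for the induced topological dynamical system $\mathbb{Z}^d \curvearrowright \partial_e T(A)$, and then to invoke preservation of the UCT under $\mathbb{Z}^d$-crossed products for the classifiability statement. First I would observe that since $A$ is separable, the trace space $T(A)$ is a metrizable compact convex subset of the dual of a separable Banach space, so its extreme boundary $\partial_e T(A)$ is metrizable; by the Bauer assumption it is also compact. By hypothesis, $\mathbb{Z}^d$ acts freely and minimally on $\partial_e T(A)$, so \cite[Theorem~4.2]{Niu_comparison_2019} grants that this topological dynamical system has the uniform Rokhlin property.

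With the uniform Rokhlin property in hand, Corollary~\autoref{corintro:uRp} applies directly and yields that $A \rtimes_\alpha \mathbb{Z}^d$ is simple, separable, unital, nuclear and $\mathcal{Z}$-stable. I note in passing that outerness of $\alpha$, which is implicitly needed when Corollary~\autoref{corintro:uRp} is invoked via Theorem~\autoref{thmintro.B}, is automatic in our setting: any inner automorphism of $A$ acts trivially on $T(A)$ and hence on $\partial_e T(A)$, so outerness follows from freeness of the induced action on $\partial_e T(A)$.

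For the second assertion, I assume additionally that $A$ is classifiable, so in particular it satisfies the UCT. Iterating the Pimsner--Voiculescu / Rosenberg--Schochet result \cite{RS} $d$ times shows that the UCT is preserved under crossed products by $\mathbb{Z}^d$, so $A \rtimes_\alpha \mathbb{Z}^d$ satisfies the UCT as well. Together with the structural properties established above, the classification theorem from the introduction then yields that $A \rtimes_\alpha \mathbb{Z}^d$ is classifiable. The substantive input is of course in Corollary~\autoref{corintro:uRp} and in Niu's verification of the uniform Rokhlin property; once those are granted, no further hard step arises, so the ``main obstacle'' here is merely checking that the topological side of the hypothesis places us squarely in Niu's framework.
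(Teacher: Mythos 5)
Your proposal is correct and follows essentially the same route as the paper: the paper likewise deduces the uniform Rokhlin property for $\mathbb{Z}^d \curvearrowright \partial_e T(A)$ from \cite[Theorem~4.2]{Niu_comparison_2019}, applies Corollary~\autoref{corintro:uRp}, and obtains the UCT statement from \cite{RS}. Your side remark on outerness is a valid alternative to the paper's mechanism (which gets outerness from the weak Rokhlin property with respect to invariant traces via \autoref{lma:iwRpOuter}), but the substance of the argument is identical.
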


\autoref{s.URP} is closed with a further class of examples to which our methods apply. More precisely, it is not hard to check
that all dynamical systems as in Question~\ref{qstA} such that the set of invariant traces is a finite-dimensional simplex,
verify the McDuff property with respect to invariant traces.
As a consequence, Theorem \ref{thmintro.B} entails the following corollary.
\begin{corintro}[{\autoref{thm:main2.}}] \label{cor:mono}
	Let $G$ be a countable, discrete, amenable group, let $A$ be a simple, separable, unital, nuclear $\mathcal{Z}$-stable
	\cstar-algebra, and let $\alpha \colon G \to\mathrm{Aut}(A)$ be an outer action such that the set of invariant traces is a finite-dimensional simplex. Then $A \rtimes_\alpha G$ is a simple,
separable, unital, nuclear, $\mathcal{Z}$-stable $C^*$-algebra.
\end{corintro}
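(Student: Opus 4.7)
The strategy is to verify directly that $(A, \alpha)$ satisfies the McDuff property with respect to invariant traces, and then invoke \autoref{thmintro.B} to conclude $\mathcal{Z}$-stability of $A \rtimes_\alpha G$. Thus the entire work lies in establishing the McDuff condition, and the finite-dimensionality of the simplex $T(A)^\alpha$ is what will make this possible.

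Write $\partial_e T(A)^\alpha = \{\tau_1, \ldots, \tau_n\}$ for the (finitely many) extreme invariant traces. To produce c.p.c.\ order zero maps $\varphi \colon M_d \to A$ that are approximately central, approximately $\alpha$-equivariant, and satisfy $\tau(1 - \varphi(1))$ uniformly small on all invariant traces, I would work in the tracial ultrapower $A^\omega$ with respect to the uniform $2$-seminorm $\|\cdot\|_{2, T(A)^\alpha}$ and consider the central sequence algebra $F := A^\omega \cap A'$. Using that $T(A)^\alpha$ has finitely many extreme points together with the $\mathcal{Z}$-stability of $A$, one shows that $F$ decomposes as a finite direct sum of McDuff II$_1$ von Neumann algebras, one for each $\tau_i$, and that the induced action $\alpha_\omega$ preserves this decomposition (since each $\tau_i$ is $\alpha$-invariant). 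This reduces the problem to a summand-by-summand analysis that is essentially monotracial in nature.

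The key step is then to produce, inside each summand of $F$, a unital copy of $M_d$ fixed by $\alpha_\omega$. Here I would use a Rokhlin-type argument for amenable group actions on McDuff II$_1$ von Neumann algebras (in the spirit of \cite{matuisato:1,matuisato:2,sato,ozawa_dix}), exploiting that $G$ is amenable, that $\alpha$ is outer on $A$, and that each summand of $F$ has property $\Gamma$ as a consequence of $\mathcal{Z}$-stability. Assembling the summand-wise fixed copies of $M_d$ into a block-diagonal unital embedding $M_d \hookrightarrow F^{\alpha_\omega}$ and lifting to a sequence of c.p.c.\ order zero maps $M_d \to A$ would give precisely the McDuff property with respect to invariant traces, after which \autoref{thmintro.B} closes the argument.

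The main obstacle is this Rokhlin-type step: producing $\alpha_\omega$-invariant unital embeddings of $M_d$ in each McDuff summand of $F$. One has to combine amenability of $G$ (to carry out Ornstein--Weiss-style quasi-tilings), outerness of $\alpha$ on $A$ (to ensure that the induced action on each summand is sufficiently free for the fixed-point algebra to contain large matrix subalgebras), and $\mathcal{Z}$-stability of $A$ (so that each summand is indeed McDuff, allowing matrix units to be built with arbitrarily small tracial remainder). Once this dynamical step is in place, the combinatorial bookkeeping to pass from summand-wise fixed $M_d$'s to a single $\alpha_\omega$-invariant embedding into $F^{\alpha_\omega}$ is routine.
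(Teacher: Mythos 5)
Your overall strategy---reduce to the von Neumann algebra generated by $A$ under the invariant traces, find invariant unital copies of $M_d$ in the central sequence algebra there, and then invoke Theorem~\ref{thmintro.B}---is the same as the paper's, and the reduction step is essentially right: since $\partial_e(T(A)^\alpha)=\{\tau_1,\dots,\tau_n\}$ is finite, one has $J_{T(A)^\alpha}=J_\sigma$ for $\sigma=\frac1n\sum_i\tau_i$, so $A_{\mathcal U}/J_{T(A)^\alpha}$ is the tracial ultrapower of the hyperfinite finite von Neumann algebra $\mathcal N=\pi_\sigma(A)''$. (Minor caveat: the $\tau_i$ are extreme only in $T(A)^\alpha$, not in $T(A)$, so the summands of $\mathcal N$ need not be factors; this does not affect the argument.)

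The genuine gap is the step you yourself flag as ``the main obstacle,'' and the mechanism you propose for it would not work. You want to run a Rokhlin/Ornstein--Weiss tiling argument in each summand, using outerness of $\alpha$ on $A$ ``to ensure that the induced action on each summand is sufficiently free.'' But outerness of $\alpha_g$ on $A$ does \emph{not} imply outerness of the induced automorphism of $\pi_\sigma(A)''$ --- the weak extension of an outer automorphism can perfectly well be inner (this is exactly the gap between outerness and strong outerness), so the freeness your towers require may simply fail, and indeed the body version of this corollary (\autoref{thm:main2.}) establishes the McDuff property with respect to invariant traces \emph{without} assuming outerness at all. The paper sidesteps this entirely: by the Sutherland--Takesaki classification of actions of discrete amenable groups on semifinite hyperfinite von Neumann algebras \cite{ST} (see also \cite[Theorem 5.4]{GHV}), \emph{every} such action $\alpha_\sigma$ on $\mathcal N$ absorbs the trivial action on the hyperfinite II$_1$ factor $\mathcal R$ up to cocycle conjugacy, with no freeness hypothesis, and \cite[Proposition 5.3]{GHV} converts this absorption into unital embeddings $M_d\to(\mathcal N^{\mathcal U}\cap\mathcal N')^{\alpha_\sigma^{\mathcal U}}$. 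So the hard dynamical step you leave open is the content of a deep classification theorem that must be cited (or reproved), not a routine Rokhlin argument; and the hypothesis you lean on to carry it out is not available at the von Neumann level. Outerness enters only afterwards, to get simplicity and to apply \autoref{thm:main2}.
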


The paper is structured as follows. In \autoref{S.prel} we cover some preliminary notions and results needed in later sections.
In \autoref{S.comparison} we introduce the definition of dynamical subequivalence (\autoref{df:DynSubEq}), weak dynamical comparison (\autoref{df:wdc}) and we prove Theorem~\autoref{thmB}. In \autoref{S.4} we define the McDuff property with respect to invariant traces (\autoref{def:wuMc}) and we
prove Theorem~\autoref{thmintro.B}. In \autoref{S.rokhlin} we present the definition of weak Rokhlin property with respect to invariant traces for actions on $C^*$-algebras (\autoref{df:iwRp}), and we show that it implies McDuffness with respect to invariant traces of the action, if the starting $C^*$-algebra is uniformly McDuff (\autoref{lemma:cpcoz}). Finally, \autoref{s.URP} is devoted to the proof of Corollaries~\autoref{corintro:uRp}
and \ref{cor:mono}.

After uploading the preprint to the arxiv, we learned that 
Li, Niu and Wang have independently obtained 
results similar to some of ours in an ongoing project.

\vspace{0.2cm}

\textbf{Acknowledgements:} The authors would like to thank 
Stuart White for helpful conversations. Most of this work was done
while the first three named authors were visiting the fourth one. 
We thank the Institut de Math\'ematiques 
de Jussieu-Paris Rive Gauche for the hospitality.

\section{Preliminaries} \label{S.prel}

\subsection{Functional calculus for order zero maps}
We briefly cover some basics on completely positive
order zero maps, and we refer the reader to 
\cite{WinZac_cpcoz} for more details.

\begin{df} 
Let $A$ and $B$ be $C^*$-algebras, and let $\varphi\colon A\to B$ be a completely positive map. We say
that $\varphi$ has \emph{order zero} if for every $a,b\in A$
with $ab=0$, we have $\varphi(a)\varphi(b)=0$.\end{df}

By \cite[Theorem 2.3 and Corollary 3.1]{WinZac_cpcoz}, 
there is a natural one-to-one correspondence 
between completely positive 
contractive order zero maps $A\to B$
and homomorphisms 
$C_0((0,1])\otimes A\to B$. 
Under this correspondence, 
a completely positive contractive order zero map
$\varphi\colon A\to B$ induces the homomorphism 
$\rho_\varphi\colon C_0((0,1])\otimes A\to B$ determined by
$\rho_\varphi(\mathrm{id}_{(0,1]}\otimes a)=\varphi(a)$ for all $a\in A$. Conversely, if $\rho\colon C_0((0,1])\otimes
A\to B$ is a homomorphism, then the induced completely positive contractive order zero map $\varphi_\rho\colon A\to B$
is given by $\varphi_\rho(a)=\rho(\mathrm{id}_{(0,1]}\otimes a)$ for all $a\in A$.
This assignment allows to consider functional calculus for 
completely positive maps of order zero.

\begin{prop}[{\cite[Corollary 3.2]{WinZac_cpcoz}}] \label{df:FuncCalcOz}
Let $A$ and $B$ be $C^*$-algebras, let $\varphi\colon A\to B$ be a completely positive contractive order zero map, let 
$\rho_\varphi\colon C_0((0,1])\otimes A\to B$ be the 
associated homomorphism as above, and let 
$f\in C_0((0,1])_+$. Then the function $f(\varphi)\colon A\to B$ defined as $f(\varphi)(a)=\rho_\varphi(f\otimes a)$ for 
all $a\in A$, is a completely positive contractive order zero map.
\end{prop}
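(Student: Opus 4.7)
The plan is to exploit the bijective correspondence (recalled just above the statement, from \cite{WinZac_cpcoz}) between completely positive contractive order zero maps $\varphi\colon A\to B$ and $*$-homomorphisms $\rho_\varphi\colon C_0((0,1])\otimes A\to B$ satisfying $\rho_\varphi(\mathrm{id}_{(0,1]}\otimes a)=\varphi(a)$. Once this is in hand, verifying the proposition reduces to factoring $f(\varphi)$ as the composition of two maps, each manifestly completely positive and contractive, and then reading off the order-zero condition by a one-line calculation inside the tensor product.

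Explicitly, let $\iota_f\colon A\to C_0((0,1])\otimes A$ be the map $a\mapsto f\otimes a$. Since $f$ lies in the commutative \cstar-algebra $C_0((0,1])$ and is positive, tensoring with $f$ is completely positive; and assuming $\|f\|_\infty\le 1$ (which may be arranged by rescaling $f$ and then scaling the conclusion back), $\iota_f$ is contractive because $\|f\otimes a\|=\|f\|_\infty\|a\|\le\|a\|$. By construction $f(\varphi)=\rho_\varphi\circ\iota_f$, so $f(\varphi)$ is completely positive and contractive as a composition of such maps (the $*$-homomorphism $\rho_\varphi$ is automatically c.p.c.). For the order-zero condition, suppose $a,b\in A$ satisfy $ab=0$. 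Then inside $C_0((0,1])\otimes A$ we have
\[\iota_f(a)\iota_f(b)=(f\otimes a)(f\otimes b)=f^2\otimes ab=0,\]
and applying the $*$-homomorphism $\rho_\varphi$ yields
\[f(\varphi)(a)\,f(\varphi)(b)=\rho_\varphi(\iota_f(a))\,\rho_\varphi(\iota_f(b))=\rho_\varphi(\iota_f(a)\iota_f(b))=0,\]
which is exactly the order-zero condition.

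The only nontrivial input here is the Winter--Zacharias correspondence itself -- specifically the fact that $\rho_\varphi$ extends to a genuine $*$-homomorphism on all of $C_0((0,1])\otimes A$, not merely a linear map on the generating copy $\mathrm{id}_{(0,1]}\otimes A$. Granted that input, there is essentially no obstacle, and the argument reduces to tracking the two factors of $f\otimes a$ through the correspondence; the upshot is that the functional calculus $f\mapsto f(\varphi)$ is, at the level of the correspondence, just precomposition with the c.p.\ map $\iota_f$.
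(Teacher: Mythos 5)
Your proof is correct. Note, however, that the paper does not actually prove this proposition: it is quoted verbatim from Winter--Zacharias \cite[Corollary 3.2]{WinZac_cpcoz}, so there is no in-paper argument to compare against. Your factorization $f(\varphi)=\rho_\varphi\circ\iota_f$ with $\iota_f(a)=f\otimes a$ is precisely the standard argument behind that corollary: $\iota_f$ is completely positive because $f\otimes [a_{ij}]=(g\otimes c)^*(g\otimes c)$ for $f=g^*g$ and $[a_{ij}]=c^*c$, the $*$-homomorphism $\rho_\varphi$ is automatically c.p.c., and the order-zero property falls out of $(f\otimes a)(f\otimes b)=f^2\otimes ab$. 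You are also right to flag that contractivity of $f(\varphi)$ genuinely requires $\|f\|_\infty\le 1$, a hypothesis the statement omits but which the paper silently assumes in its only application (Lemma \ref{prop:FunctCalcOzMaps} takes $f$ to be a positive contraction); your rescaling remark handles this correctly.
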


We will need the following observation.

\begin{lma}\label{prop:FunctCalcOzMaps}
Let $A,B$ and $C$ be $C^*$-algebras, 
let $\varphi\colon A\to B$ 
be a completely positive contractive order zero map, let
$q\colon B\to C$ be a homomorphism, and
let $f\in C_0((0,1])$ be a positive contraction. Then 
$ q\circ f(\varphi)=f(q\circ\varphi)$.
\end{lma}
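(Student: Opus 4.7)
The plan is to reduce the identity to the uniqueness part of the correspondence between completely positive contractive order zero maps and homomorphisms out of $C_0((0,1])\otimes A$ described before \autoref{df:FuncCalcOz}.

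First, I would observe that the composition $q\circ\varphi\colon A\to C$ is again a completely positive contractive order zero map, since $q$ is a $*$-homomorphism and therefore preserves orthogonality of positive elements. Hence, by \cite[Theorem 2.3 and Corollary 3.1]{WinZac_cpcoz} there is an associated homomorphism $\rho_{q\circ\varphi}\colon C_0((0,1])\otimes A\to C$ uniquely determined by the property that $\rho_{q\circ\varphi}(\mathrm{id}_{(0,1]}\otimes a)=(q\circ\varphi)(a)$ for all $a\in A$.

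Next, I would consider the composition $q\circ\rho_\varphi\colon C_0((0,1])\otimes A\to C$, which is itself a homomorphism. On elementary tensors of the form $\mathrm{id}_{(0,1]}\otimes a$ it satisfies
\[
(q\circ\rho_\varphi)(\mathrm{id}_{(0,1]}\otimes a)=q(\varphi(a))=(q\circ\varphi)(a).
\]
By the uniqueness clause of the correspondence recalled above, this forces $q\circ\rho_\varphi=\rho_{q\circ\varphi}$.

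Finally, applying both sides to the element $f\otimes a$, for any $a\in A$, yields
\[
q(f(\varphi)(a))=q(\rho_\varphi(f\otimes a))=\rho_{q\circ\varphi}(f\otimes a)=f(q\circ\varphi)(a),
\]
which is the desired identity. The only mildly delicate step is the verification that $q\circ\varphi$ is still order zero so that \autoref{df:FuncCalcOz} applies, but this is immediate: if $a,b\in A_+$ satisfy $ab=0$, then $\varphi(a)\varphi(b)=0$ and hence $q(\varphi(a))q(\varphi(b))=0$. Everything else is a formal consequence of the universal property of $C_0((0,1])\otimes A$.
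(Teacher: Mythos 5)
Your proposal is correct and follows essentially the same route as the paper: both arguments establish $q\circ\rho_\varphi=\rho_{q\circ\varphi}$ by checking agreement on the generating elements $\mathrm{id}_{(0,1]}\otimes a$ (equivalently, by the uniqueness clause of the correspondence) and then evaluate at $f\otimes a$. The extra remark that $q\circ\varphi$ is again order zero is a worthwhile, if routine, addition that the paper leaves implicit.
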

\begin{proof}
Adopt the notation from the discussion above. One readily checks that 
$q\circ \rho_\varphi=\rho_{q\circ\varphi}$, by verifying the equality
on elements of $\mathrm{id}_{(0,1]}\otimes A$, which 
generate $C_0((0,1])\otimes A$. Using this at the second 
step, for $a\in A$ we conclude that 
\[(q\circ f(\varphi))(a)=q(\rho_\varphi(f\otimes a))=
\rho_{q\circ\varphi}(f\otimes a)=f(q\circ\varphi)(a).\qedhere
\]
\end{proof}

\subsection{Norm and tracial ultrapowers}

Given a $C^*$-algebra $A$, we write $\ell^\infty(A)$ for the $C^*$-algebra
of all bounded sequences in $A$ with the supremum norm and pointwise
operations.

\begin{df}\label{def:normUltPwr}
	Let $A$ be a $C^*$-algebra and $\mathcal{U}$ be a free ultrafilter on $\mathbb{N}$. We write $c_\mathcal{U}$ for the ideal of $\ell^\infty(A)$ given by 
	\[c
_\mathcal{U}(A)=\big\{(a_n)_{n\in\mathbb{N}}\in \ell^{\infty}(A)\colon \lim_{n\to\cU}\|a_n\|=0 \big\}.\]
The \emph{norm ultrapower} of $A$ is defined to be the quotient
$A_\mathcal{U}= \ell^{\infty}(A)/c_\mathcal{U}(A)$. 
\end{df}

Note that $A$ embeds canonically into $\ell^\infty(A)$ and into 
$A_\mathcal{U}$ via constant
sequences. We write $A_\U\cap A'$ for the relative commutant of
$A$ in $A_\U$; this algebra is usually called the \emph{central sequence
algebra} of $A$. 

Given a $C^*$-algebra $A$, we let $T(A)$ be the set of its tracial states. Thorough this paper, we shall refer to tracial states
simply as \emph{traces}.
For a trace $\tau$ on a unital $C^*$-algebra $A$, the associated \emph{trace seminorm} on $A$ is given by
$\|a\|_{2,\tau}=\tau(a^*a)^{1/2}$
for all $a\in A$.  For a non-empty closed subset $T\subseteq T(A)$, we set
$\|\cdot\|_{2,T}= \sup_{\tau\in T}\|\cdot\|_{2,\tau}$.

\begin{df}\label{def:TrUltPwr}
Let $A$ be a $C^*$-algebra, fix  a free ultrafilter $\mathcal{U}$ on $\mathbb{N}$ and a closed non-empty subset $ T\subseteq T(A)$.  We write $J_T$ for the ideal of $A_\cU$ given by
\[J_T= \big\{(a_n)_{n\in\mathbb{N}}\in A_\cU\colon \lim_{n\to \cU}\|a_n\|_{2,T}=0 \big\}. \]
The \emph{uniform tracial ultrapower} of $A$ is defined to be the quotient
$A^\mathcal{U}= A_\cU/J_{T(A)}$. We denote by 
$\kappa\colon A_\cU \to A^{\cU}$ the canonical quotient map.
\end{df}

If $A$ is a unital $C^*$-algebra and $T\subseteq T(A)$ is a closed non-empty subset, then $\|\cdot\|_{2,T}$ is a seminorm. If $T$ contains a faithful trace, or more generally if $T$ separates positive elements, then $\|\cdot\|_{2,T}$ is a norm on $A$. This is always true for instance when $A$ is simple with non-empty trace space. Thorough this paper we will often consider the case of a discrete, amenable group acting on a $C^*$-algebra $A$, and $T = T(A)^\alpha$, that is, the set of all invariant traces. Note that the latter is then non-empty by Day's Fixed Point Theorem
(\cite[Theorem 1.3.1]{runde}).

Regardless of the context, we will always consider situations where $\|\cdot\|_{2,T(A)}$ and $\|\cdot\|_{2,T(A)^\alpha}$ are norms on $A$, even when we do not explicitly say so.
In this case the canonical embedding $A\hookrightarrow A_\U$ induces an injective map 
$A\to A^\U$. We write $A^\U\cap A'$ for the relative commutant of the image
of $A$ inside $A^\U$.

Given any $\tau \in T(A)$, we can naturally extend it to a trace on $A_\cU$ (and $A^\cU$) by setting
\[
\tau((a_n)_{n \in \mathbb{N}}) = \lim_{n \to \cU} \tau(a_n), \text{ for every } (a_n)_{n \in \mathbb{N}} \in A_\cU \text{ (or in $A^\cU$)}.
\]
Keeping this in mind, with a slight abuse of notation we denote such extension again by $\tau$, and throughout the paper we will identify $T(A)$
with a subset of $T(A_\cU)$ and $T(A^\cU)$.

If $\alpha\colon G\to\mathrm{Aut}(A)$ is an action of a 
discrete group $G$ on
a $C^*$-algebra $A$, then $\alpha$ naturally acts coordinatewise on $A_\cU$
and $A^\cU$. We denote such actions by $\alpha_\cU$ and $\alpha^\cU$ respectively. Moreover, the quotient map $\kappa$ is equivariant with respect
to these actions.

In the absence of group actions, the following proposition is
\cite[Proposition~4.5 and Proposition~4.6]{KirchRor_centr}.
The last claim is proved in
\cite[Lemma~7.5]{GHV}.

\begin{prop}\label{lemma:surjective}
Let $A$ be a simple, separable, unital $C^*$-algebra, let 
$G$ be a countable, discrete group, and let 
$\alpha\colon G \to \mathrm{Aut}(A)$ be an action.
Then the quotient map $\kappa\colon A_\cU \to A^\cU$ restricts to a surjective equivariant map
\[
\kappa\colon \big(A_\cU\cap A',\alpha_\cU\big)\to \big(A^\cU\cap A',
\alpha^\cU\big).
\]
Moreover, $\kappa$ restricts to a quotient map $\left(A_\cU\cap A'\right)^{\alpha_\cU}\to \left(A^\cU\cap A'\right)^{\alpha^\cU}$.
\end{prop}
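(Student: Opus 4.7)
The plan is to prove the first assertion by combining the surjectivity results of \cite[Propositions 4.5 and 4.6]{KirchRor_centr} with the observation that $\kappa$ intertwines the two actions, and to handle the ``moreover'' part via a F\o lner-style averaging argument inside the ultrapower.

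For the first assertion, since $\alpha$ acts coordinatewise on $\ell^\infty(A)$, the induced actions $\alpha_\cU$ on $A_\cU$ and $\alpha^\cU$ on $A^\cU$ are automatically compatible with the quotient map $\kappa$, and $\kappa$ sends constant sequences to constant sequences, so it maps $A_\cU\cap A'$ into $A^\cU\cap A'$. Surjectivity of this restriction is exactly \cite[Proposition 4.5]{KirchRor_centr}, and the equivariance is then immediate.

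For the moreover statement, I would start with $x\in (A^\cU\cap A')^{\alpha^\cU}$ and, by the first part, lift it to some $y_0\in A_\cU\cap A'$. Equivariance of $\kappa$ together with $\alpha^\cU$-invariance of $x$ force $(\alpha_\cU)_g(y_0)-y_0\in J_{T(A)}$ for every $g\in G$, so the task is to correct $y_0$ modulo $J_{T(A)}\cap(A_\cU\cap A')$ to an exactly $\alpha_\cU$-invariant element. Since the intended applications are to amenable $G$, the natural move is to pick a sequence representative $(y_0^n)_n$ and a F\o lner sequence $(F_n)$ for $G$, and set $z_n=|F_n|^{-1}\sum_{g\in F_n}\alpha_g(y_0^n)$. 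The F\o lner condition yields $\alpha_\cU$-invariance of $(z_n)_n$, and since $\tau\circ\alpha_g$ is again a trace for each $g$, the ideal $J_{T(A)}$ is $\alpha_\cU$-invariant; hence $(z_n)_n-y_0\in J_{T(A)}$, and $(z_n)_n$ still lifts $x$.

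The main obstacle will be preserving centrality through the averaging. For fixed $a\in A$ one has $[\alpha_g(y_0^n),a]=\alpha_g([y_0^n,\alpha_{g^{-1}}(a)])$, whose norm tends to $0$ as $n\to\cU$ only pointwise in $g$, so the fact that $|F_n|\to\infty$ rules out a uniform control over $g\in F_n$. The fix is a standard diagonal/reindexing argument: fixing a dense sequence $(a_k)_k\subset A$ and an exhaustion $G=\bigcup_k E_k$ by finite sets, extract a subsequence $(y_0^{n(k)})_k$ for which $\|[y_0^{n(k)},\alpha_{g^{-1}}(a_\ell)]\|<1/k$ whenever $g\in E_k$ and $\ell\le k$ (available precisely because $y_0\in A_\cU\cap A'$), and average these reindexed representatives. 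This is essentially the content of \cite[Lemma 7.5]{GHV}, which one can invoke directly if brevity is preferred.
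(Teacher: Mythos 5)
The paper offers no argument of its own for this proposition: it simply points to \cite[Propositions~4.5 and 4.6]{KirchRor_centr} for the non-equivariant surjectivity and to \cite[Lemma~7.5]{GHV} for the statement about fixed-point algebras, so your fallback option of invoking those references is literally what the paper does. Your self-contained F\o lner-averaging argument for the ``moreover'' part is, however, a genuinely different route from the one behind \cite[Lemma~7.5]{GHV}, which proceeds by showing that $J_{T(A)}$ is an equivariant $\sigma$-ideal (via $G$-invariant quasicentral approximate units in the style of Kasparov and of \cite{szabo:equiSI}) and therefore needs no amenability. This matters because the proposition is stated for an arbitrary countable discrete group $G$: your averaging argument only proves the amenable case, so as a proof of the statement as written it is strictly weaker, even though every application in this paper has $G$ amenable.

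Two technical points in your sketch also need attention, both fixable by the same device. First, the step ``$(z_n)_n-y_0\in J_{T(A)}$'' does not follow merely from $\alpha_{\cU,g}(y_0)-y_0\in J_{T(A)}$ for each $g$ together with invariance of the ideal: the convex combination defining $z_n$ ranges over $F_n$ with $|F_n|\to\infty$, so one needs $\lim_{n\to\cU}\max_{g\in F_n}\|\alpha_g(y_0^n)-y_0^n\|_{2,T(A)}=0$, which is exactly the same non-uniformity problem you flag for the commutators; your diagonalization must therefore also incorporate the conditions $\|\alpha_g(y_0^{n})-y_0^{n}\|_{2,T(A)}<1/k$ for $g$ in the relevant finite sets. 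Second, ``extract a subsequence and average the reindexed representatives'' is slightly off in the ultrapower setting, since reindexing a representing sequence generally changes the element of $A_\cU$ and one must still produce a lift of the \emph{same} $x$. The clean version keeps the index $n$ and lets the F\o lner set grow slowly: choose a decreasing chain $U_1\supseteq U_2\supseteq\cdots$ of sets in $\cU$ on which the $k$-th batch of estimates holds, set $k(n)=\max\{k\colon n\in U_k\}$, and define $z_n=|F_{k(n)}|^{-1}\sum_{g\in F_{k(n)}}\alpha_g(y_0^n)$. With these repairs your argument is correct for amenable $G$; for the general statement one should fall back on the cited \cite[Lemma~7.5]{GHV}.
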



\section{Dynamical comparison for actions of amenable groups} \label{S.comparison}

%

%

The main goals of this section are introducing the notion of \emph{weak dynamical comparison} (\autoref{df:wdc})
and proving \autoref{thm:wComp}, of which Theorem \ref{thmB} in the introduction is a direct consequence.
Before doing so, we recall some elementary notions from the 
order theory of $C^*$-algebras, in particular the Cuntz
semigroup. 

\begin{df}\label{df:CuntzEq}
	Let $A$ be a $C^*$-algebra. Let $a,b\in A_+$. \begin{enumerate}
		\item We say that 
			\textit{$a$ is Cuntz subequivalent to $b$ (in $A$)} and write $a\precsim b$, if there exists a sequence $(r_n)_{n\in\mathbb{N}}$ in $A$ such that $\lim_{n\to\infty}\|r_nbr_n^*-a\|=0$.
		\item We say that \textit{$a$ is Cuntz equivalent to $b$} and write $a\sim b$, if $a\precsim b$ and $b\precsim a$.
	\end{enumerate}

The \textit{Cuntz semigroup} of $A$ is defined as 
	\[\mathrm{Cu}(A)= (A\otimes \mathcal{K})_+/\sim. \]
	
	The class of an element $a\in (A\otimes \mathcal{K})_+$ in $\mathrm{Cu}(A)$ will be denoted by $[a]$. The set $\mathrm{Cu}(A)$ is an ordered abelian semigroup (in fact, a monoid) with respect to addition induced by direct sums and order induced by the Cuntz subequivalence.	
\end{df}

For $\delta>0$ and a self-adjoint element $a\in A$, we let $(a-\delta)_+$ denote the function $[0,\infty)\to \mathbb{R}$ given by $x\mapsto \max\{x-\delta,0\}$ applied on $a$ using functional calculus.

For $n\in\N$, let 
$f_n\colon [0,\infty)\to [0,1]$ be the continuous function which 
vanishes on $[0,\frac{1}{n}]$, is equal to 1 on $[\frac{2}{n},\infty)$,
and is linear on $[\frac{1}{n},\frac{2}{n}]$.

\begin{df}\label{df:dtau}
Let $A$ be a unital $C^*$-algebra and let $QT(A)$ be the set of normalized 2-quasitraces on $A$ (see \cite[Definition 6.7]{Hannes}
for a definition in the unbounded case). Fix $\tau\in QT(A)$, and 
let $a\in (A\otimes\K)_+$. The quasitrace $\tau$ uniquely extends to an unbounded quasitrace on $A \otimes \K$, which we still denote $\tau$. We set 
$d_\tau(a)= \lim_{n\to\infty} \tau(f_n(a))\in [0,\infty]$.
Equivalently, we have
\[d_\tau(a)=\sup_{\ep>0} \lim_{n\to\I} \tau((a-\ep)_+^{1/n}).\]

Given $a\in (A\otimes \mathcal{K})_+$, the \textit{rank function of $a$}, denoted 
$\mathrm{rk}(a)\colon QT(A)\to [0,\infty]$, is the map defined as
	\[\mathrm{rk}(a)(\tau)= d_\tau(a)\]
for all $\tau\in QT(A)$, which are identified with the corresponding extensions on $A \otimes \K$.
\end{df}

The rank function $\mathrm{rk}(a)$ is a lower semicontinuous function on $QT(A)$. If moreover
$A$ is simple and $a\neq 0$, then $\mathrm{rk}(a)$ is strictly positive.

It is easy to see that $a \precsim b$ implies $\rk(a) \le \rk(b)$.
In particular, 
$a \sim b$ implies $\rk(a) = \rk(b)$. It follows that 
the rank function is well-defined on the Cuntz semigroup.

\begin{df}\label{df:StrictComp}
Let $A$ be a unital $C^*$-algebra. We say that $A$ has \textit{strict comparison} if whenever $a,b\in (A\otimes \mathcal{K})_+$ with $b\neq 0$ satisfy $\mathrm{rk}(a)<\mathrm{rk}(b)$, then $a\precsim b$.
\end{df}

In this paper we will exclusively be interested in unital $C^*$-algebras for which all quasitraces are traces (this is the case
for instance for exact $C^*$-algebras \cite{HaaQT}). We will therefore systematically consider the rank functions as maps defined on $T(A)$,
and read strict comparison as a property about comparison on traces.

Given a partially ordered set $(S,\leq)$ and $s,t\in S$, it is said that $s$ is \emph{(sequentially) compactly contained in} $t$, in symbols $s\ll t$, if for all increasing sequences $(t_n)_{n\in\mathbb{N}}$ such that $\sup_{n\in\mathbb{N}} t_n$ exists and $t\leq \sup_{n\in\mathbb{N}} t_n$, we have that $s\leq t_{n}$ for some $n\in\mathbb{N}$.

\begin{lma}\label{ContFuncBetween}
	Let $A$ be a simple, separable, unital $C^*$-algebra such that $QT(A) = T(A)$, let $a \in (A\otimes \mathcal{K})_+\setminus\{0\}$, let $\ep > 0$ and let $\gamma\in (0,1)$. Then there exists a continuous affine function $f\colon T(A) \to [0,\infty]$ such that
	\[(1-\gamma)\cdot \mathrm{rk}\left((a-\ep)_+\right) < f < \mathrm{rk}(a).\]
\end{lma}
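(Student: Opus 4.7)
The plan is to construct $f$ as a rescaling $f(\tau) = (1 - \gamma/2)\, \tau(b)$ of the continuous affine function $\tau \mapsto \tau(b)$, for a well-chosen positive element $b$ in a fixed matrix amplification $A \otimes M_n$ satisfying $\rk((a - \ep)_+)(\tau) \le \tau(b) \le \rk(a)(\tau)$ pointwise on $T(A)$. The scalar $(1 - \gamma/2)$ then produces the two required strict inequalities via simplicity of $A$, which makes every trace on $A$ faithful and hence every rank function strictly positive on $T(A)$.

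To produce such a $b$, I would pick a small $\delta > 0$ (say $\delta < \ep/4$) and use a Kirchberg--R\o rdam approximation to choose $n \in \mathbb{N}$ and $a_0 \in (A \otimes M_n)_+$ with $\|a - a_0\| < \delta$. By R\o rdam's lemma one then has $(a - \ep)_+ \precsim (a_0 - \ep + \delta)_+$ and $(a_0 - \ep/2)_+ \precsim a$. Set $b := f_N((a_0 - \ep/2)_+)$, where $f_N$ is the cutoff from \autoref{df:dtau} with $2/N < \ep/4$. Then $b \in (A \otimes M_n)_+$, so $\tau \mapsto \tau(b)$ is automatically continuous and affine on $T(A)$; a direct spectral computation, using that $f_N \equiv 1$ on $[2/N, \infty)$ and $f_N \le \chi_{(0,\infty)}$, together with the chosen constants, yields the pointwise sandwich $\rk((a-\ep)_+) \le \tau(b) \le \rk((a_0 - \ep/2)_+) \le \rk(a)$.

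With $f := (1 - \gamma/2)\,\tau(b)$, the inequality $f < \rk(a)$ follows from $f \le (1-\gamma/2)\rk(a) < \rk(a)$, using $\rk(a)(\tau) > 0$ by simplicity ($a \neq 0$). For the lower bound, when $(a-\ep)_+ \neq 0$ we have $(1-\gamma)\rk((a-\ep)_+) < (1 - \gamma/2)\rk((a-\ep)_+) \le f$, the strict step using positivity of $\rk((a-\ep)_+)(\tau)$ by simplicity. The edge case $(a - \ep)_+ = 0$, i.e.\ $\|a\| \le \ep$, is handled by re-running the entire argument with $\ep$ replaced by some $\ep' \in (0, \|a\|)$: the resulting $f$ satisfies the stronger conclusion with $\ep'$ in place of $\ep$, and in particular the required one, since the left-hand side of the desired inequality is identically zero.

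The main point to get right is the coordination of the constants $\delta$ and $N$ so that both pointwise bounds on $\tau(b)$ hold simultaneously and the rescaling by $(1-\gamma/2)$ genuinely strictly separates $f$ from $(1-\gamma)\rk((a-\ep)_+)$ from below and from $\rk(a)$ from above. Once $b$ is confined to the matrix amplification $A \otimes M_n$, continuity of $\tau \mapsto \tau(b)$ on $T(A)$ is automatic from the weak-$*$ topology, and the rest is a careful but routine bookkeeping exercise.
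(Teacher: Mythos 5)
Your argument is correct, but it takes a genuinely different route from the paper. The paper deduces the compact containment $[(a-\ep)_+]\ll[a]$ in $\mathrm{Cu}(A)$, scales it by Robert's lemma, writes $(1-\gamma/4)\,\mathrm{rk}(a)$ as the supremum of an increasing sequence of strictly positive continuous affine functions (Alfsen plus Tikuisis--Toms), and uses the compact containment to stop at a finite stage. You instead build a concrete positive element $b$ in a matrix amplification $A\otimes M_n$ whose trace evaluation sandwiches the two rank functions, so that continuity and affineness of $\tau\mapsto\tau(b)$ come for free from the weak-$*$ topology; the bookkeeping you describe does close up (with $\delta,\,2/N<\ep/4$ one gets $(a-\ep)_+\precsim(a_0-\ep/2-2/N)_+$, hence $\mathrm{rk}((a-\ep)_+)\le\tau(b)$, while $f_N\le\chi_{(0,\infty)}$ and $(a_0-\ep/2)_+\precsim a$ give the upper bound), the strict separation by the factor $1-\gamma/2$ works because $\tau(b)\le n\|b\|<\infty$ while simplicity makes all rank functions strictly positive, and your re-run with $\ep'<\|a\|$ correctly disposes of the case $(a-\ep)_+=0$. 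What each approach buys: yours is more elementary and self-contained, avoiding the Cuntz-semigroup and Choquet-theoretic machinery entirely (and it shows in passing that $\mathrm{rk}((a-\ep)_+)$ is bounded); the paper's is shorter given the cited references and is the natural argument when one wants to interpolate between abstract lower semicontinuous affine functions rather than between ranks of concrete elements.
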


\begin{proof}
Since $\mathrm{Cu}(A)\cong \mathrm{Cu}(A\otimes\mathcal{K})$, the combination of
\cite[Corollary~2.56]{Hannes} and \cite[Lemma~3.14]{Hannes} yields that $[(a-\ep)_+]\ll [a]$ in $\mathrm{Cu}(A)$. By \cite[Lemma~2.2.5]{Leonel13}, it follows that 

\begin{equation}\label{eq:rkineq}
\left(1-\gamma/2\right)\cdot\mathrm{rk}\left((a-\ep)_+\right)\ll \left(1-\gamma/4\right)\cdot\mathrm{rk}(a).
\end{equation}
	
Since $A$ is simple and $a\neq 0$, we have that $\mathrm{rk}(a)\colon T(A)\to [0,\infty]$ is a strictly positive function. Moreover, it is well-known that $\mathrm{rk}(a)\colon T(A)\to [0,\infty]$, as a lower semicontinuous affine function, is a pointwise limit of an increasing net of continuous affine functions (see \cite[Corollary~I.1.4]{Alfsen71}). 
	As $T(A)$ is a compact and metrizable space, we can find an increasing sequence $(f_n)_{n\in\mathbb{N}}$ of strictly positive continuous affine functions $f_n\colon T(A)\to [0,\infty]$ such that $\sup_{n\in\mathbb{N}}f_n(\tau)=\left(1-\gamma/4\right)\cdot\mathrm{rk}(a)(\tau)$ for all $\tau\in T(A)$ (see \cite[Lemma~4.2]{TikuisisToms}).
	
	By \eqref{eq:rkineq}, there exists $n\in\mathbb{N}$ such that 
	\[\left(1-\gamma/2\right)\cdot \mathrm{rk}\left((a-\ep)_+\right)\leq f_n \leq \left(1-\gamma/4\right)\cdot\mathrm{rk}(a)<\mathrm{rk}(a). \]
	
	Set $f = f_n$. Simplicity of $A$ implies that $\mathrm{rk}\left((a-\ep)_+\right)$ is either strictly positive or constantly zero. The latter happens exactly when $\left(a-\ep\right)_+=0$, and then we are done since $f_n$ was chosen to be strictly positive. On the
	other hand, if $\left(a-\ep\right)_+\neq 0$, we have 
	
	\[\left(1-\gamma\right)\cdot \mathrm{rk}\left((a-\ep)_+\right)<\left(1-\gamma/2\right)\cdot \mathrm{rk}\left((a-\ep)_+\right)\leq f <\mathrm{rk}(a),
	\]
as desired.
\end{proof}

We turn to the definition of noncommutative dynamical subequivalence, an analogue of dynamical subequivalence (see \cite[Definition 3.1]{Ker_dimension_2020})
originating from an upcoming work by Bosa, Perera, Wu and Zacharias \cite{BPWZ_dynamical_2022}.

\begin{df}\label{df:DynSubEq}
Let $\alpha\colon G\to \mathrm{Aut}(A)$ be an action of a countable, discrete group $G$ on a $C^*$-algebra $A$. 
Let $a,b\in A_+$.  
	
\begin{enumerate}
\item We say that $a$ is \textit{elementarily dynamically subequivalent} to $b$, and write $a\precsim_0 b$, if for any $\ep>0$ there are $\delta>0$, $n\in\mathbb{N}$, elements $g_1,\ldots,g_n\in G$, and $x_1,\ldots, x_n\in (A\otimes \mathcal{K})_+$ such that 
\[(a-\ep)_+\precsim \oplus_{j=1}^{n} \alpha_{g_j}(x_j) \quad \text{ and } \quad \oplus_{j=1}^{n}x_j\precsim (b-\delta)_+.\] 
	
\item \label{dse:item2}
We say that $a$ is \textit{dynamically subequivalent} to $b$ and write $a\precsim_\alpha b$, if there exist $m\in\mathbb{N}$ and elements $y_1,\ldots,y_m\in (A\otimes \mathcal{K})_+$ with
\[a=y_1\precsim_0 y_2\precsim_0 \ldots \precsim_0 y_m=b. \] 
\end{enumerate}
\end{df}

We currently do not know whether the relation $\precsim_0$ is transitive, hence the need for item \eqref{dse:item2} in the definition of
$\precsim_\alpha$.
The following easy observations are isolated for later use.

\begin{rem}\label{rem: CompNoepsDel}
	Let $a,b\in A_+$. Suppose that there are $n\in\mathbb{N}$, elements $g_1,\ldots,g_n\in G$, and $x_1,\ldots, x_n\in (A\otimes \mathcal{K})_+$ such that 
	\[a\precsim \oplus_{j=1}^{n} \alpha_{g_j}(x_j) \quad \text{ and } \quad \oplus_{j=1}^{n}x_j\precsim b.\] 
It then follows from a double application of \cite[Proposition~2.4]{Ror_1992} that $a\precsim_0 b$. \end{rem}

\begin{rem}\label{rem: DyncomVScom}
Let $\alpha\colon G\to \mathrm{Aut}(A)$ be an action of a countable, discrete group $G$ on a $C^*$-algebra $A$, and
let $a,b\in A_+$. It is easy to see that if $a\precsim_\alpha b$ then $a\precsim b$ in $A\rtimes_r G$, using the fact that $\alpha$ is unitarily implemented inside $A\rtimes_r G$.
\end{rem}

Using the notion of dynamical subequivalence, and inspired
by the concept of strict comparison, 
Bosa, Perera, Wu and 
Zacharias defined \emph{dynamical strict comparison} for 
a dynamical system; see \cite{BPWZ_dynamical_2022}. 
In the present work, we will need 
the following weakening of this notion, which is motivated 
by \cite[Definition~2.1]{Nar_polynomial_2021}. For our convenience, we state it in terms of traces (instead of quasitraces).

\begin{df} \label{df:wdc}
Let $A$ be a unital $C^*$-algebra, let $G$ be a discrete group, and let $\alpha \colon G \to \mathrm{Aut}(A)$ be an action. 
We say that $\alpha$ has \emph{weak dynamical comparison} if whenever $a,b \in (A\otimes \mathcal{K})_+$ with 
	$b\neq 0$ satisfy
	\[
	\sup_{\tau \in T(A)^\alpha} d_\tau(a) < \inf_{\tau \in T(A)^\alpha} d_\tau(b),\]
	then $a\precsim_\alpha b$.
\end{df}

\begin{df}
Let $A$ be a simple, unital \cstar-algebra. We say that 
\emph{all ranks are realized} if for 
every lower-semicontinuous, affine function $QT(A)\to (0,\infty]$
there exists $a\in (A\otimes\mathcal{K})_+$
such that $\mathrm{rk}(a)=f$.
\end{df}

We point out that it is conjectured that all ranks are realized for any nonelementary, simple, separable, stably finite, unital $C^*$-algebra. (And there is evidence that this may also hold 
outside of the simple case; see \cite{APRT}.)
Examples of simple \cstar-algebras where all ranks are realized
include those \cstar-algebras of stable rank one, by
\cite[Theorem~8.11]{ThielSR1}. In particular, this holds for
simple, stably finite, unital, $\mathcal{Z}$-stable $C^*$-algebras by \cite[Theorem~6.7]{Ror_2004}.

\begin{thm}\label{thm:wComp}
Let $G$ be a countable, discrete, amenable group and let $A$ be a simple, separable, unital \cstar-algebra with strict comparison,
such that $QT(A) = T(A)$ and such that all ranks are realized.
Then every action of $G$ on $A$ has weak dynamical comparison.
\end{thm}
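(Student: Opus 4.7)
My plan is to translate the strict inequality of ranks on \emph{invariant} traces into a pointwise strict inequality of ranks on all of $T(A)$ through a Følner averaging device provided by amenability, at which point strict comparison yields the two Cuntz subequivalences demanded by \autoref{df:DynSubEq}. I first apply the standard cut-down to build slack: replace $a$ by $(a-\varepsilon)_+$ and $b$ by $(b-\delta)_+$ for small $\varepsilon,\delta>0$ while preserving a definite gap between the ranks on $T(A)^\alpha$. Two applications of \autoref{ContFuncBetween} then furnish continuous affine functions $f_1<f_2$ on $T(A)$ with $f_1<\mathrm{rk}(a)$ and $f_2<\mathrm{rk}(b)$ holding globally on $T(A)$, and with
\[
\mathrm{rk}((a-\varepsilon)_+)<f_1<f_2<\mathrm{rk}((b-\delta)_+) \qquad\text{on } T(A)^\alpha.
\]

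Next I invoke amenability of $G$. Because $T(A)$ is a compact metrizable Choquet simplex and $T(A)^\alpha\neq\emptyset$ by Day's fixed point theorem, a standard Følner averaging argument produces a finite set $\{g_1,\dots,g_n\}\subseteq G$ such that, for every $\tau\in T(A)$, the average $\tau_F:=\tfrac1n\sum_{j=1}^n \tau\circ\alpha_{g_j}$ is approximately invariant in the sense that there exists $\sigma\in T(A)^\alpha$ with $|f_i(\tau_F)-f_i(\sigma)|<\eta$ for $i=1,2$, where $\eta>0$ is smaller than the gaps in the display above. Using the hypothesis that all ranks are realized, I choose $x\in(A\otimes\mathcal{K})_+$ with $n\cdot \mathrm{rk}(x)=\tfrac12(f_1+f_2)$.

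Two applications of strict comparison in $A$ then close the argument. First, $\mathrm{rk}(\bigoplus_{j=1}^n x)=\tfrac12(f_1+f_2)<f_2<\mathrm{rk}(b)$ on all of $T(A)$, so $\bigoplus_{j=1}^n x\precsim(b-\delta)_+$. Second, $G$-equivariance of rank under trace translation ($\mathrm{rk}(\alpha_g(x))(\tau)=\mathrm{rk}(x)(\tau\circ\alpha_g)$) gives, for every $\tau\in T(A)$,
\[
\mathrm{rk}\Big(\bigoplus_{j=1}^n\alpha_{g_j}(x)\Big)(\tau)=n\cdot\mathrm{rk}(x)(\tau_F)=\tfrac12(f_1+f_2)(\tau_F),
\]
and by the choice of the Følner set together with the slack between $f_1$ and $\mathrm{rk}((a-\varepsilon)_+)$ on $T(A)^\alpha$, this exceeds $\mathrm{rk}((a-\varepsilon)_+)(\tau)$ for every $\tau\in T(A)$. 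Strict comparison hence gives $(a-\varepsilon)_+\precsim\bigoplus_{j=1}^n\alpha_{g_j}(x)$, and combining both subequivalences via \autoref{rem: CompNoepsDel} produces $a\precsim_0 b$, and in particular $a\precsim_\alpha b$.

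The main obstacle is the last step above: rank functions on $T(A)$ are merely lower semicontinuous, so a naive swap of $\tau$ for a nearby invariant trace is dangerous. The key move is to realize the intermediate rank as the genuinely \emph{continuous} affine function $\tfrac12(f_1+f_2)$; this makes the Følner substitution $\tau\mapsto\tau_F$ uniformly controllable and reduces matters to a continuity and compactness argument on $T(A)$. Amenability of $G$ (through Følner sets) and realizability of all ranks in $A$ are the two hypotheses doing the heavy lifting.
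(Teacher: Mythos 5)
There is a genuine gap, and it occurs exactly where you flag the ``main obstacle'': the claimed inequality
\[
\tfrac12(f_1+f_2)(\tau_F) \;>\; \mathrm{rk}\left((a-\varepsilon)_+\right)(\tau)
\qquad\text{for \emph{every} }\tau\in T(A)
\]
does not follow from your setup and is false in general. Near-invariance of $\tau_F$ only lets you compare $\tfrac12(f_1+f_2)(\tau_F)$ with the values of $f_1,f_2$ (and hence of $\mathrm{rk}((a-\varepsilon)_+)$) at a nearby \emph{invariant} trace $\sigma$; it gives no control whatsoever over $\mathrm{rk}((a-\varepsilon)_+)(\tau)$ at the original, possibly highly non-invariant, $\tau$. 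Since the hypothesis only constrains the ranks of $a$ and $b$ on $T(A)^\alpha$, one can have $\mathrm{rk}(a)(\tau)$ enormous at some non-invariant $\tau$ while $f_1,f_2\le \mathrm{rk}(b)$ stay bounded there, killing the inequality. A related problem already appears earlier: you need $f_1<f_2$ \emph{globally} on $T(A)$ (both to define $x$ usefully and to run the first comparison), but \autoref{ContFuncBetween} forces $f_1>(1-\gamma)\,\mathrm{rk}((a-\varepsilon)_+)$ everywhere while $f_2<\mathrm{rk}(b)$ everywhere, and $\mathrm{rk}(a)$ may dominate $\mathrm{rk}(b)$ off $T(A)^\alpha$, so such a pair need not exist. (There is also a smaller slip: $\tfrac12(f_1+f_2)<\mathrm{rk}(b)$ on $T(A)$ only yields $\bigoplus_j x\precsim b$, not $\precsim (b-\delta)_+$; one needs an extra Dini--Cartan/compact-containment step to insert the $\delta$.)

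The paper's proof avoids precisely this trap by refusing to go from $a$ to $b$ in one elementary step. Using realization of ranks, it interpolates an element $y$ with \emph{constant} rank $c$ strictly between the two invariant-trace bounds, and proves $a\precsim_0 y$ and $y\precsim_0 b$ separately (this is also why \autoref{df:DynSubEq} allows chains). The point is that in each half, the side of the comparison carrying the F\o{}lner-translated direct sum --- whose rank at $\tau$ is an evaluation at the averaged trace $\tau_F$ --- is always compared against the \emph{constant} function $c$, for which the mismatch between evaluating at $\tau$ and at $\tau_F$ is invisible; the side involving the genuinely non-constant ranks $\mathrm{rk}((a-\varepsilon)_+)$ and $\mathrm{rk}(b)$ is handled by a purely global inequality on all of $T(A)$ coming from \autoref{ContFuncBetween}, with no averaging at all. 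To repair your argument you would need to reintroduce this constant-rank intermediary (or some substitute for it); as written, the second application of strict comparison does not go through.
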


\begin{proof}
Let $\alpha\colon G\to\mathrm{Aut}(A)$ be an action.
Let $a, b \in (A\otimes \mathcal{K})_+$ with $b\neq 0$ 
and such that 
\[\sup_{\tau \in T(A)^\alpha} d_\tau(a) < \inf_{\tau \in T(A)^\alpha} d_\tau(b).\]
Find constants $c_1, c_2$ and $c$ satisfying
\begin{equation}\label{eq:c1c2}
\sup_{\tau \in T(A)^\alpha} d_\tau(a) < c_1<c<c_2< \inf_{\tau \in T(A)^\alpha} d_\tau(b).
\end{equation}
Since all ranks are realized, 
there exists $y \in (A\otimes \mathcal{K})_+$ such that
$\mathrm{rk}(y) \equiv c$. It is sufficient to show that $a \precsim_0 y$ and $y \precsim_0 b$. 
	
We first show that $a \precsim_0 y$. Let $\ep > 0$ be given. By 
\autoref{ContFuncBetween}, there exists a continuous affine function $f_0\colon T(A)\to [0,\infty]$ such that
	\[\frac{c_1}{c}\cdot \mathrm{rk}\left((a-\ep)_+\right)<f_0<\mathrm{rk}(a). \]
	Set $f = \frac{c}{c_1}\cdot f_0$. Then
	\begin{equation}\label{eq:Cutaf}
		\mathrm{rk}\left((a-\ep)_+\right) < f
	\end{equation}
	and 
	\begin{equation}\label{eq:fOnInv}
		\max_{\tau\in T(A)^\alpha} f(\tau) < c. 
	\end{equation}
	
	Let $(F_n)_{n\in\mathbb{N}}$ be a F{\o}lner sequence in $G$. We claim that for some $n\in\mathbb{N}$ we have
	\begin{equation} \label{eq:ftau}
	\max_{\tau\in T(A)}	f\Big(\frac{1}{\left|F_n\right|}\sum_{g \in F_n}g\cdot\tau\Big) < c.
\end{equation}
Indeed, suppose that for each $n\in\mathbb{N}$ there exists a trace $\tau_n$ for which the above inequality fails. By compactness of $T(A)$, the sequence $\big(\frac{1}{\left|F_n\right|}\sum_{g \in F_n}g\cdot\tau\big)_{n\in\mathbb{N}}$ has a subsequence converging to some $\tau_0 \in T(A)$. This trace $\tau_0$ is easily seen to be invariant. Since $f$ is continuous, it follows that $f(\tau_0) \ge c$, which contradicts \eqref{eq:fOnInv}.
	
Note that $\tau\mapsto f\Big(\frac{1}{\left|F_n\right|}\sum_{g \in F_n}g\cdot\tau\Big)$ is a continuous function and $\big(\mathrm{rk}((y-\delta)_+)\big)_{\delta>0}$ is an increasing net of lower semicontinuous functions converging pointwise to 
$c$ as $\delta \to 0$. Thus, by the Dini-Cartan lemma (see \cite[Lemma~2.2.9]{Helms}), there exists $\delta>0$ such that
	\begin{equation}\label{eq:dini}
		f\Big(\frac{1}{\left|F_n\right|}\sum_{g \in F_n}g\cdot\tau\Big) < \mathrm{rk}\left(\left(y-\delta\right)_+\right)(\tau), \quad \text{ for all } \tau\in T(A).
	\end{equation}

Since $f$ is a strictly positive continuous, affine function, the same is true for $\frac{1}{\left|F_n\right|}f$. Since all ranks
are realized, there is $x\in (A\otimes \mathcal{K})_+$ such that $\mathrm{rk}(x) = \frac{1}{\left|F_n\right|}f$. Then
\begin{align*}
\mathrm{rk}\left(\left(a-\ep\right)_+\right) \overset{\mathclap{\eqref{eq:Cutaf}}}< f&  =\sum\limits_{g\in F_n}\frac{1}{\left|F_n\right|}f \\
& =\sum\limits_{g\in F_n}\mathrm{rk}(x) \\
& =\mathrm{rk}\Big(\oplus_{g\in F_n}x\Big).
\end{align*}
Moreover,
\begin{align*}
\mathrm{rk}\Big(\oplus_{g\in F_n}\alpha_{g^{-1}}(x)\Big) & =\sum_{g\in F_n}\mathrm{rk}\left(\alpha_{g^{-1}}(x)\right) \\
& =\sum\limits_{g\in F_n}\frac{1}{\left|F_n\right|}g^{-1}\cdot f \\
& \overset{\mathclap{\eqref{eq:dini}}}<\mathrm{rk}\left(\left(y-\delta\right)_+\right).
\end{align*}

Since $A$ has strict comparison, the computations above yield that
\begin{equation}\label{eq:cutaBelowX}
\left(a-\ep\right)_+\precsim \oplus_{g\in F_n}x \quad \text{ and } \quad \oplus_{g\in F_n} \alpha_{g^{-1}}(x)\precsim (y-\delta)_+.
\end{equation}
This completes the proof that $a\precsim_0 y$. 

We now turn to
the proof of $y\precsim_0 b$, which is similar. By \eqref{eq:c1c2}, we have 
\[c_2<\min_{\tau\in T(A)^\alpha}\mathrm{rk}(b)(\tau).\]
Again by the Dini-Cartan lemma, there is $\eta>0$ such that
	\begin{equation}\label{eq:c2etab}
c_2<\min_{\tau\in T(A)^\alpha}\mathrm{rk}\left(\left(b-\eta\right)_+\right)(\tau).\end{equation}
	By \autoref{ContFuncBetween}, there exists a continuous affine function $h_0\colon T(A)\to [0,\infty]$ such that
	\begin{equation}\label{eq:rkbeta}
	\frac{c}{c_2}\cdot \mathrm{rk}\left(\left(b-\eta\right)_+\right)<h_0<\mathrm{rk}(b). 
	\end{equation}
Set $h=\frac{c_2}{c}\cdot h_0$, and note that 
$\mathrm{rk}\left(\left(b-\eta\right)_+\right)<h$. 
Combining this with \eqref{eq:c2etab}, we get
\begin{equation*}
    c_2<\min_{\tau\in T(A)^\alpha} h(\tau).
\end{equation*}
Using compactness of $T(A)$ and continuity of $h$, analogously to what we did before for $f$ in \eqref{eq:ftau}, find $n\in\mathbb{N}$ such that 
\begin{equation}\label{eq:c2h}
		c_2<\min_{\tau\in T(A)}h\Big(\frac{1}{\left|F_n\right|}\sum\limits_{g\in F_n}g\cdot \tau\Big).
	\end{equation}
Since all ranks are realized, 
there exists $z\in (A\otimes \mathcal{K})_+$ such that $\mathrm{rk}(z)=\frac{1}{\left|F_n\right|}h_0$. We compute
\begin{align*}
\mathrm{rk}(y) & =\frac{c}{c_2}\cdot c_2 \\
& \overset{\mathclap{\eqref{eq:c2h}}}<  \frac{c}{c_2}\cdot  \sum\limits_{g\in F_n}\frac{1}{\left|F_n\right|}g^{-1}\cdot h \\
 & =\sum\limits_{g\in F_n}\frac{1}{\left|F_n\right|}g^{-1}\cdot h_0 \\
 & =\mathrm{rk}\Big(\oplus_{g\in F_n}\alpha_{g^{-1}}(z)\Big).
 \end{align*}
	Moreover,
	\begin{align*}
	\mathrm{rk}\Big(\oplus_{g\in F_n} z \Big) & =\sum\limits_{g\in F_n}\mathrm{rk}(z) \\
	& =\sum\limits_{g\in F_n}\frac{1}{\left|F_n\right|}h_0 \\
	& =h_0 \\
	&\overset{\mathclap{\eqref{eq:rkbeta}}} <\mathrm{rk}(b).
	\end{align*}
	These computations, together with strict comparison of $A$, yield that
	\[y\precsim \oplus_{g\in F_n}\alpha_{g^{-1}}(z)  \quad \text{ and } \quad \oplus_{g\in F_n}z\precsim b. \]
	We deduce that $y\precsim_0 b$ by \autoref{rem: CompNoepsDel}, 
	as desired. This finishes the proof.
\end{proof} 

Theorem \ref{thmB}, stated here as a corollary, is a particular case of \autoref{thm:wComp}. 

\begin{cor}\label{cor:thmB}
Let $G$ be a countable, discrete, amenable group and let $A$ be a simple, separable, unital, exact, $\mathcal{Z}$-stable \cstar-algebra.
Then every action of $G$ on $A$ has {weak dynamical comparison}.
\end{cor}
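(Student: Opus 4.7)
The plan is to deduce this from Theorem~\ref{thm:wComp} by verifying its three hypotheses for a simple, separable, unital, exact, $\mathcal{Z}$-stable \cstar-algebra $A$. Concretely, I need to confirm that (i) $QT(A)=T(A)$, (ii) $A$ has strict comparison, and (iii) all ranks on $A$ are realized.

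For hypothesis (i), I would invoke Haagerup's theorem, which states that on an exact unital \cstar-algebra every 2-quasitrace is a trace; this gives $QT(A)=T(A)$ immediately. For hypothesis (ii), I would cite Rørdam's theorem that every simple, unital, $\mathcal{Z}$-stable \cstar-algebra has strict comparison of positive elements by traces (equivalently, quasitraces, which under exactness coincide). For hypothesis (iii), I would split cases according to whether $A$ is stably finite or purely infinite: in the stably finite case, $\mathcal{Z}$-stability forces stable rank one by \cite[Theorem~6.7]{Ror_2004}, and then Thiel's result \cite[Theorem~8.11]{ThielSR1} (recalled in the excerpt) guarantees that every lower-semicontinuous affine strictly positive function $T(A)\to(0,\infty]$ is the rank of some element of $A\otimes\mathcal{K}$.

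If $A$ happens to be purely infinite simple, then $T(A)=\emptyset$, so the hypothesis in weak dynamical comparison is vacuous in the sense that only $a,b$ need to be considered with $b\neq 0$, for which $a\precsim b$ always holds in $A$; since the (purely existential) definition of $\precsim_0$ is then trivially satisfied with a single summand $x=b$ and $g_1=e$, weak dynamical comparison is automatic. Otherwise the assumptions of Theorem~\ref{thm:wComp} are met, and the conclusion follows directly.

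I do not expect any real obstacle here; the corollary is essentially a citation-collecting exercise, and the only small subtlety is the trivial purely-infinite case, which can be disposed of in one line.
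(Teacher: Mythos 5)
Your proposal is correct and follows essentially the same route as the paper's proof: Haagerup for $QT(A)=T(A)$, R{\o}rdam's \cite[Theorem~4.5]{Ror_2004} for strict comparison, and the combination of \cite[Theorem~6.7]{Ror_2004} with \cite[Theorem~8.11]{ThielSR1} for rank realization, then apply \autoref{thm:wComp}. Your explicit one-line disposal of the purely infinite case (where $T(A)=\emptyset$ and comparison is trivial) is a small point the paper's proof passes over silently, and it is handled correctly.
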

\begin{proof}
Note that simple, unital, exact, $\mathcal{Z}$-stable $C^*$-algebras verify $QT(A) = T(A)$ and have 
strict comparison by \cite{HaaQT} and 
\cite[Theorem~4.5]{Ror_2004}. Moreover, all ranks on them are realized by the combination
of \cite[Theorem~8.11]{ThielSR1} and \cite[Theorem~6.7]{Ror_2004}.
Thus the result follows from \autoref{thm:wComp}. 
\end{proof}

\section{McDuffness with respect to invariant traces and \texorpdfstring{$\mathcal{Z}$}{Z}-stability} \label{S.4}

We start this section by introducing the McDuff property with respect to invariant traces for actions
of discrete groups on separable, unital $C^*$-algebras. This is the main new definition of the present paper.
The rest of the section is devoted to the proof of \autoref{thm:main1}, of which Theorem \ref{thmintro.B} is a corollary.

\begin{df} \label{def:wuMc}
Let $A$ be a separable, unital $C^*$-algebra, 
let $G$ be a discrete group, and let $\alpha\colon G\to\Aut(A)$ be an
action with $T(A)^\alpha\neq\emptyset$. We say that $(A,\alpha)$
has the \emph{McDuff property with respect to invariant traces} if for every $d\in\N$ there exists
a unital homomorphism $M_d\to (A_\U\cap A')^{\alpha_\U}/J_{T(A)^\alpha}$.
\end{df}

Let $A$ be a unital $C^*$-algebra. An automorphism $\alpha\in \mathrm{Aut}(A)$ is called \textit{outer} if there does not exist a unitary $u\in A$ with $\alpha= \mathrm{Ad}(u)$. 
An action $\alpha\colon G\to \mathrm{Aut}(A)$ is called \emph{(pointwise) outer} if $\alpha_g$ is outer for every $g\in G\setminus\{e\}$.

The following is an analogue of \cite[Lemma~7.9]{PhiLarge} in the noncommutative setting. The result is probably known to some experts, but
we include a proof since we could not find it in the literature.
\begin{lma}\label{lma:Kishnoncom}
	Let $A$ be a simple, separable, unital \cstar-algebra, let $G$ be a countable, discrete group, let
	$\alpha \colon G \to \mathrm{Aut}(A)$ be an outer action, 
	and let $b\in A\rtimes_{r,\alpha} G$ be a positive non-zero element. Then 
	there exists a positive element $a \in A\setminus\{0\}$ such that
	$a \precsim b$ in $A\rtimes_{r,\alpha}G$.
\end{lma}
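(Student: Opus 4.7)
The plan is to carry out a standard Kishimoto-type averaging argument adapted to the reduced crossed product. First I would reduce $b$ to a form with explicit Fourier coefficients. Let $E\colon A\rtimes_{r,\alpha}G\to A$ denote the canonical faithful conditional expectation. Normalizing so that $\|b\|=1$, set $c=b^{1/2}\neq 0$ and approximate it in norm by a finitely supported element $c_0=\sum_{g\in F}c_g u_g$. Then $c_0^*c_0=\sum_{h\in FF^{-1}}d_h u_h$ where the diagonal coefficient is
\[
d_e=\sum_{g\in F}\alpha_{g^{-1}}(c_g^*c_g)\in A_+,
\]
which is non-zero since $c_0\neq 0$ (it is a sum of positive terms, at least one of which is non-zero). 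Moreover $\|b-c_0^*c_0\|$ can be made arbitrarily small.

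Next, I would apply Kishimoto's classical averaging lemma for outer actions on simple \cstar-algebras: since $\alpha_h$ is outer for every $h\in G\setminus\{e\}$ and $A$ is simple, for any prescribed $\ep>0$, any finite set of elements $\{d_h:h\in FF^{-1}\setminus\{e\}\}\subset A$, and the non-zero positive element $d_e\in A_+$, there exists a positive contraction $x\in A$ with $\|xd_ex\|>\|d_e\|-\ep$ and
\[
\|x d_h\alpha_h(x)\|<\ep\qquad\text{for every }h\in FF^{-1}\setminus\{e\}.
\]
Using the covariance relation $u_hx=\alpha_h(x)u_h$, I compute
\[
xc_0^*c_0 x=xd_ex+\sum_{h\neq e}x d_h\alpha_h(x)\,u_h,
\]
so the right-hand side is within norm $|FF^{-1}|\ep$ of the positive element $xd_ex\in A$, which has norm strictly greater than $\|d_e\|-\ep>0$.

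Combining this with $\|b-c_0^*c_0\|<\ep$, I obtain that $xbx$ lies within some small norm $\delta>0$ of the non-zero element $a_0:=xd_ex\in A_+$. By Rørdam's lemma (\cite[Proposition~2.2]{Ror_1992} applied inside $A\rtimes_{r,\alpha}G$), this gives $(a_0-\delta)_+\precsim xbx$ in $A\rtimes_{r,\alpha}G$; taking $\delta$ small enough so that $\|a_0\|>\delta$ keeps $(a_0-\delta)_+$ non-zero. Since $xbx\precsim b$ with $x\in A$, the element $a:=(a_0-\delta)_+\in A_+\setminus\{0\}$ satisfies $a\precsim b$ in $A\rtimes_{r,\alpha}G$, as required.

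The main obstacle is the precise invocation of Kishimoto's averaging lemma: one needs a version that simultaneously controls finitely many off-diagonal products $xd_h\alpha_h(x)$ while preserving a prescribed amount of mass $\|xd_ex\|$ on the diagonal. This refinement is classical and available in the literature on outer actions, but it is the technical step that does the real work; once it is in place, the rest of the argument is bookkeeping with Cuntz subequivalence.
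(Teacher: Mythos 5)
Your proposal is correct and follows essentially the same route as the paper: approximate $b^{1/2}$ by a finitely supported element, identify the nonzero diagonal coefficient via the faithful conditional expectation, apply Kishimoto's averaging lemma (the paper cites \cite[Lemma 3.2]{KishOut}, which is exactly the refinement you describe) to suppress the off-diagonal terms while preserving the norm of the diagonal part, and finish with R{\o}rdam's lemma. The only cosmetic differences are bookkeeping choices (the paper routes the final Cuntz comparison through $(hc^*ch-2\delta)_+\sim(ch^2c^*-2\delta)_+\precsim(cc^*-2\delta)_+\precsim b$ rather than directly via $xbx\precsim b$, and spells out the quantitative choice of $\ep$ and $\delta$ that you leave implicit).
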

\begin{proof}
	Since $b\sim t b$ for any $t\in (0,\infty)$, 
	we can assume that $\|b\|= 1$.
	The canonical conditional expectation $E \colon A \rtimes_{r,\alpha} G \to A$ is faithful, and hence $E(b)$ is a non-zero positive element in $A$. 
	Fix $0 <\delta < \frac{\|E(b)\|}{8}$, and find a contraction $c \in A \rtimes_{\mathrm{alg}} G$ such that $\lVert c - b^{1/2} \rVert < \delta$. Write
	\[
	c^*c = E(c^*c)+\sum_{g \in K} c_g \lambda_g
	\]
	for some finite subset $K\subseteq G\setminus \{e\}$. Note that 
    $\|c^*c-b\|<2\delta$ and $\|cc^*-b\|<2\delta$. By R{\o}rdam's 
    lemma \cite[Proposition~2.4]{Ror_1992}, we get
	\begin{equation}\label{ccb}
(cc^*-2\delta)_+\precsim b
	\end{equation}
	Since $E$ is contractive, we also get
	\begin{equation}\label{eq:cb}
	\|E(c^*c)\|> \|E(b)\|-2\delta.
	\end{equation}

	Fix $0<\varepsilon<  \frac{\|E(b)\|}{2(|K|+1)}$. Apply \cite[Lemma 3.2]{KishOut}
	in order to obtain a positive element $h\in A$ with $\|h\|=1$ such that
	\begin{equation}\label{eq:esthEh}
	\|hE(c^*c)h\|\geq \|E(c^*c)\|-\varepsilon
	\end{equation}
	and
	\[\max_{g\in K}\|hc_g\alpha_g(h)\|\leq \varepsilon. \]
Since $\alpha_g=\mathrm{Ad}(\lambda_g)$ when restricted to the copy of $A$ inside $A\rtimes_{r,\alpha}G$ we have
		
	\[\max_{g\in K}\|hc_g\lambda_gh\|\leq \varepsilon. \]
	This, in particular, yields
	
	\begin{equation*}
	\|hc^*ch-hE(c^*c)h\|\leq |K|\varepsilon,
	\end{equation*}
which, in turn, implies that

	\begin{equation}\label{eq:hcE}
		(hE(c^*c)h-|K|\varepsilon-2\delta)_+\precsim (hc^*ch-2\delta)_+,
	\end{equation}
by \cite[Corollary~1.6]{PhiLarge}.
Set $a=(hE(c^*c)h-|K|\varepsilon-2\delta)_+$. 
Clearly $a\in A$ is a positive element. We claim that $a\neq 0$ and $a\precsim b$. To show that $a\neq 0$, we argue as follows:
	
	\begin{align*}
	\|a\|& \ \geq \ \|hE(c^*c)h\|-|K|\varepsilon-2\delta\\
	&\ \stackrel{\mathclap{{(\ref{eq:esthEh})}}}{\geq} \ \|E(c^*c)\|-(|K|+1)\varepsilon-2\delta\\
	& \ \stackrel{\mathclap{{(\ref{eq:cb})}}}{>} \
\|E(b)\|-(|K|+1)\varepsilon-4\delta\\
	&\ > \ \|E(b)\| -(|K|+1)\frac{\|E(b)\|}{2(|K|+1)}-4\frac{\|E(b)\|}{8}
	=0.
	\end{align*}
Thus $a\neq 0$. It remains to show that $a \precsim b$. We use  
\cite[Lemma~1.4(6)]{PhiLarge} at the second step and \cite[Lemma~1.7]{PhiLarge} at the third step, to get
	\[
	a \stackrel{\mathclap{{(\ref{eq:hcE})}}}\precsim (hc^*ch-2\delta)_+\sim (ch^2c^*-2\delta)_+\precsim (cc^*-2\delta)_+\stackrel{\mathclap{{(\ref{ccb})}}}{\precsim} b.\qedhere\]
\end{proof}



In \autoref{thm:main1} below, we will show that, under mild assumptions, a dynamical system $(A, \alpha)$ satisfying the
McDuff property with respect to invariant traces, produces a $\mathcal{Z}$-stable crossed product. In order to motivate the argument,
assume we are given a 
completely positive contractive order zero 
map 
\[\varphi \colon M_d \to (A_\cU \cap A')^{\alpha_\cU} \ \mbox{ with } \ 1 - \varphi(1) \in J_{T(A)^\alpha}\]
coming from McDuffness with respect to invariant traces.
Lifting along the canonical quotient maps $\ell^\infty(A)\to A_\cU \to A^\cU$ and using projectivity of the cone of $M_d$, we get a sequence $(\varphi_n)_{n\in\N}$ of order zero maps $\varphi_n\colon M_d\to A$ which, among other things, 
satisfy 
$\lim_{n\to\infty}\|1-\varphi_n(1)\|_{2,T(A)^\alpha}= 0$. In other
words, $1-\varphi_n(1)$ is small in all invariant traces. In order to conclude that the crossed product is $\mathcal{Z}$-stable,
we resort to the notion of tracial $\mathcal{Z}$-stability introduced in \cite{HO} (see \autoref{def:trZst}).
Because of this, what we need to show is that $1-\varphi_n(1)$ is small in the sense of Cuntz subequivalence. To do so, we need
to prove that $1-\varphi_n(1)$ is small in all 
\emph{dimension functions} associated with invariant traces, and then apply weak dynamical comparison (\autoref{thm:wComp}). In the following
proposition, we use functional calculus for order zero maps
to show that the lifts can be chosen to have this property.

\begin{prop}\label{prop:avgord0}	
Let $\alpha\colon G\to\mathrm{Aut}(A)$ be an action of a countable, discrete, amenable group $G$ on a separable, unital \cstar-algebra $A$ with non-empty trace space such
that $(A,\alpha)$ has the McDuff property with respect to invariant traces.
Let $d\in\mathbb{N}$,
let $F\subseteq A$ and $K\subseteq G$ 
be finite subsets, and let $\ep>0$.
Then there exists a completely positive contractive order zero
map $\rho\colon M_d\to A$ satisfying
	\begin{enumerate}
	\item $\|\rho(x)a-a\rho(x)\|<\ep\|x\|$ for all $x\in M_d$ and all $a\in F$,
	\item $\|\alpha_k(\rho(x))-\rho(x)\|<\ep\|x\|$ for all $x\in M_d$ and all $k\in K$,
	\item $d_\tau(1-\rho(1))<\ep$ for all $\tau\in T(A)^\alpha$.
	\end{enumerate}
\end{prop}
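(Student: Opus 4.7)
The plan is as follows. The McDuff hypothesis yields a unital $*$-homomorphism $\psi\colon M_d \to (A_\cU \cap A')^{\alpha_\cU}/J_{T(A)^\alpha}$. Using projectivity of the cone $C_0((0,1]) \otimes M_d$, we lift $\psi$ along the quotient map to a completely positive contractive (c.p.c.) order zero map $\tilde\psi\colon M_d \to (A_\cU \cap A')^{\alpha_\cU}$ satisfying $1 - \tilde\psi(1) \in J_{T(A)^\alpha}$. Applying projectivity once more to the surjection $\ell^\infty(A) \twoheadrightarrow A_\cU$, we obtain a representing sequence $(\varphi_n)_{n \in \mathbb{N}}$ of c.p.c.\ order zero maps $\varphi_n\colon M_d \to A$. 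The three defining properties of $\tilde\psi$ translate, along $\cU$, into $\lim_{n \to \cU}\|\varphi_n(x)a - a\varphi_n(x)\| = 0$ for every $a \in A$ and $x \in M_d$, into $\lim_{n \to \cU}\|\alpha_g(\varphi_n(x)) - \varphi_n(x)\| = 0$ for every $g \in G$ and $x \in M_d$, and into $\lim_{n \to \cU}\|1 - \varphi_n(1)\|_{2, T(A)^\alpha} = 0$.

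The main new step is to promote the $\|\cdot\|_{2, T(A)^\alpha}$-smallness of $1 - \varphi_n(1)$ into smallness of the dimension function $d_\tau(1 - \varphi_n(1))$. Fix $\delta \in (0, 1/2)$, to be chosen depending on $\ep$, and let $f_\delta \in C_0((0,1])$ be the positive, piecewise linear contraction vanishing on $[0, \delta]$ and equal to $1$ on $[1-\delta, 1]$. Define $\rho_n := f_\delta(\varphi_n)$, which is again c.p.c.\ order zero by \autoref{df:FuncCalcOz}. The structure theory of order zero maps yields $\rho_n(1) = f_\delta(\varphi_n(1))$, so the support projection of $1 - \rho_n(1)$ in $A^{**}$ equals $\chi_{[0, 1-\delta)}(\varphi_n(1))$. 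A Chebyshev-type estimate, exploiting that $1 - \varphi_n(1) \ge \delta$ on the spectral subspace $[0, 1-\delta)$ of $\varphi_n(1)$, gives
\[
d_\tau(1 - \rho_n(1)) = \tau\bigl(\chi_{[0, 1-\delta)}(\varphi_n(1))\bigr) \le \frac{\tau\bigl((1 - \varphi_n(1))^2\bigr)}{\delta^2} = \frac{\|1 - \varphi_n(1)\|_{2, \tau}^2}{\delta^2},
\]
uniformly in $\tau \in T(A)^\alpha$.

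Approximate centrality and $\alpha$-invariance survive this functional calculus: \autoref{prop:FunctCalcOzMaps}, applied to the quotient $\ell^\infty(A) \twoheadrightarrow A_\cU$, shows that $(\rho_n)_n$ represents the c.p.c.\ order zero map $f_\delta(\tilde\psi)\colon M_d \to A_\cU$, whose image is contained in the $C^*$-subalgebra generated by $\tilde\psi(M_d)$, hence in $(A_\cU \cap A')^{\alpha_\cU}$. It therefore suffices to choose $n$ along $\cU$ so that $\|1 - \varphi_n(1)\|_{2, T(A)^\alpha} < \delta\sqrt{\ep}$ and conditions (1) and (2) hold on the finite sets $F$ and $K$, and set $\rho := \rho_n$. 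I expect the Chebyshev cut-off step to be the main conceptual obstacle: the McDuff hypothesis only grants a second-moment control of $1 - \tilde\psi(1)$ on invariant traces, and converting this to a rank estimate inevitably requires sacrificing a controlled spectral slice near $t = 1$, with a polynomial loss in $\delta$.
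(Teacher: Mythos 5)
Your proposal is correct and follows essentially the same route as the paper: lift the unital homomorphism through the quotients using projectivity of the cone over $M_d$, cut down with a function that is $1$ near the top of the spectrum via order zero functional calculus, and convert the $\|\cdot\|_{2,T(A)^\alpha}$-smallness of $1-\varphi_n(1)$ into a bound on $d_\tau$ by a Chebyshev-type estimate on the representing measure. The only cosmetic difference is that the paper fixes the cut-off at $1/2$ (so the loss is the constant $4$ and one requires $\|1-\psi_n(1)\|_{2,T(A)^\alpha}^2<\ep/4$) rather than carrying a parameter $\delta$.
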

\begin{proof}
Use McDuffness with respect to invariant traces of $(A,\alpha)$ in order to choose
a completely positive contractive order zero map 
\[\varphi \colon M_d \to (A_\cU \cap A')^{\alpha_\cU} \ \mbox{ with } \ 1 - \varphi(1) \in J_{T(A)^\alpha}.\] 
By \cite[Proposition 1.2.4]{Win_covii}, there exists a completely positive contractive order zero map $\psi=(\psi_n)_{n \in \mathbb{N}}\colon M_d\to \ell^{\infty}(A)$ making the following diagram commute
\begin{equation*} 
\xymatrix{
 && \ell^\infty(A) \ar[d]^{q} \\%
M_d \ar[rr]_-{\varphi} \ar[urr]^{\psi} && A_\cU.
}
\end{equation*}
	
Let $f \in C_0\left((0,1]\right)$ be given by
\[f(t)\coloneqq \begin{cases}
2t, & \text{ if } t\in (0,1/2],\\
1, & \text{ if } t\in [1/2,1].
\end{cases}
\]
By \autoref{prop:FunctCalcOzMaps}, the following diagram commutes:	
\begin{equation*}
\xymatrix{
 && \ell^\infty(A) \ar[d]^{q} \\%
M_d \ar[rr]_-{f(\varphi)} \ar[urr]^{f(\psi)} && A_\cU
}
\end{equation*}
Moreover, $f(\psi)=\left(f(\psi_n)\right)_{n\in\mathbb{N}}$ and $f(\psi_n)(1)=f(\psi_n(1))$, for every $n\in\mathbb{N}$.
	
Since the range of $f(\varphi)$ is contained in $(A_\cU \cap A')^{\alpha_\cU}$ and
$1 - \varphi(1) \in J_{T(A)^\alpha}$, there exists $n \in \N$ such that
\begin{enumerate}[label=(\roman*)]
	\item $\| f(\psi_n)(x) a - af(\psi_n)(x) \| < \ep \| x \|$ for all $x \in M_d$ and all $a \in F$,
	\item $\| \alpha_k(f(\psi_n)(x)) - f(\psi_n)(x) \| < \ep \| x \|$ for all $x \in M_d$ and all $k \in K$,
	\item $\| 1 - \psi_n(1) \|_{2, T(A)^\alpha}^2 < \ep/4$.
	\end{enumerate}

Set $\rho= f(\psi_n)$. Then $\rho$ satisfies conditions (1) 
and (2) in the statement, and it remains to show that $d_\tau\left(1-\rho(1)\right)<\ep$ for all $\tau\in T(A)^\alpha$. Fix $\tau\in T(A)^\alpha$ and let $\mu_\tau$ be the Borel probability measure on the spectrum $\sigma\left(\psi_n(1)\right)$ representing the trace $\tau|_{C^*\left(1,\psi_n(1)\right)}$. 
Since $\sigma(\psi_n(1))\subseteq [0,1]$, we can regard $\mu_\tau$ as 
a probability measure on $[0,1]$ supported on $\sigma(\psi_n(1))$.
Then the following computation finishes the proof:
	\begin{align*}
	d_\tau(1-\rho(1))&=d_\tau(1-f(\psi_n)(1))
	=d_\tau(1-f(\psi_n(1)))\\
	&=\mu_\tau(\left\{x\in \sigma(\psi_n(1))\colon f(x)\neq 1 \right\})
	= 4  \cdot \frac{\mu_\tau(\left[0,\frac{1}{2}\right])}{4} \\
	& = 4  \int_0^{\frac{1}{2}}\Big(1 - \frac{1}{2}\Big)^2\, d\mu_\tau 
	\le 4  \int_0^{\frac{1}{2}} (1- t)^2\, d \mu_\tau \\
	&\le 4   \int_0^{1} (1- t)^2\, d \mu_\tau 
	 = 4  \| 1- \psi_n(1) \|^2_{2, \tau} 
	 < \ep.
	\end{align*}
\end{proof}

As anticipated, our proof of $\mathcal{Z}$-stability for crossed products
factors through Hirshberg-Orovitz' notion of tracial
$\mathcal{Z}$-stability, which we recall below.

\begin{df}[{\cite[Definition 2.1]{HO}}] \label{def:trZst}
An infinite-dimensional, unital \cstar-algebra $A$ is said to be \emph{tracially $\mathcal{Z}$-stable} if for every $d \in \mathbb{N}$,
every finite subset $F \subseteq A$, every $\ep > 0$ and every $a \in A_+\setminus\{0\}$, 
there exists a completely positive contractive order zero map $\varphi \colon M_d \to A$ such that
\begin{enumerate}
\item $1 - \varphi(1) \precsim a$,
\item\label{item:Z2} $\| [\varphi(b),c] \| < \ep \| b \|$ for all $b \in M_d$ and $c \in F$.
\end{enumerate} 
\end{df}

By \cite[Proposition~2.2 and Theorem~4.1]{HO} a simple, 
separable, unital, nuclear \cstar-algebra is $\mathcal{Z}$-stable if and only if it is tracially $\mathcal{Z}$-stable.

\begin{thm}\label{thm:main1}
Let $G$ be a countable, discrete, amenable group, let $A$ be a simple,
separable, unital, 
\cstar-algebra with $QT(A) =T(A)\neq\emptyset$, which has strict comparison and 
for which all ranks are
realized. Let $\alpha \colon G \to \mathrm{Aut}(A)$ be an outer 
action 
such that $(A,\alpha)$ has the McDuff property with respect to invariant traces. 
Then $A \rtimes_\alpha G$ is tracially $\mathcal{Z}$-stable.
If $A$ is moreover nuclear, then $A\rtimes_\alpha G$ is
$\mathcal{Z}$-stable. 
\end{thm}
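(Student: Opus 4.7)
The plan is to verify tracial $\mathcal{Z}$-stability of $B\coloneqq A\rtimes_\alpha G$ directly from \autoref{def:trZst}; when $A$ is nuclear, so is $B$ (as $G$ is amenable), and $\mathcal{Z}$-stability then follows from \cite[Theorem~4.1]{HO}, using that $B$ is simple (by outerness of $\alpha$ and Kishimoto's theorem from \cite{KishOut}), separable, unital, and infinite-dimensional.

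Fix $d\in\N$, a finite set $F\subseteq B$, $\ep>0$, and a nonzero positive $a\in B$. A standard approximation argument reduces the problem to showing that for suitably chosen finite sets $F_0\subseteq A$, $K\subseteq G$, and tolerance $\ep'>0$ (depending on $F$ and $\ep$), there is a cpc order zero map $\rho\colon M_d\to A\subseteq B$ such that $1-\rho(1)\precsim a$ in $B$, $\|[\rho(x),y]\|<\ep'\|x\|$ for $y\in F_0$, and $\|\alpha_k(\rho(x))-\rho(x)\|<\ep'\|x\|$ for $k\in K$. Indeed, for $c\in F$ approximated by $\sum_{g\in K}a_g\lambda_g$ with $a_g\in F_0$, the identity
\[
[\rho(x),a_g\lambda_g]=[\rho(x),a_g]\lambda_g+a_g\bigl(\rho(x)-\alpha_g(\rho(x))\bigr)\lambda_g
\]
together with the triangle inequality controls $\|[\rho(x),c]\|$ in terms of $\ep'$, $|K|$, and the norms of the $a_g$'s.

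To construct $\rho$, I first apply \autoref{lma:Kishnoncom} to extract a nonzero positive $a'\in A$ with $a'\precsim a$ in $B$. Since $A$ is simple and $a'\neq 0$, the rank function $\tau\mapsto d_\tau(a')$ is strictly positive on $T(A)$, and by its lower semicontinuity and the compactness of $T(A)$, the infimum $\delta\coloneqq\inf_{\tau\in T(A)}d_\tau(a')$ is strictly positive. I then invoke \autoref{prop:avgord0} with data $(F_0,K,\ep'')$, where $\ep''<\min\{\ep',\delta\}$, obtaining a cpc order zero map $\rho\colon M_d\to A$ with the required approximate commutation with $F_0$, approximate $\alpha$-invariance over $K$, and $d_\tau(1-\rho(1))<\ep''<\delta$ for all $\tau\in T(A)^\alpha$.

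The trace estimate then yields
\[
\sup_{\tau\in T(A)^\alpha}d_\tau(1-\rho(1))<\delta\le\inf_{\tau\in T(A)^\alpha}d_\tau(a'),
\]
so \autoref{thm:wComp} applies and gives $1-\rho(1)\precsim_\alpha a'$; by \autoref{rem: DyncomVScom} this implies $1-\rho(1)\precsim a'\precsim a$ in $B$, completing the verification. I expect the main conceptual difficulty to lie in the mismatch between the data available in $A$ (trace smallness, approximate centrality and $\alpha$-invariance) and the Cuntz comparison that is required inside the crossed product $B$; this gap is bridged precisely by weak dynamical comparison, which is the reason the McDuff property need only be imposed with respect to invariant traces rather than with respect to all traces.
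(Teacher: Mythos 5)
Your proposal is correct and follows essentially the same route as the paper: reduce to tracial $\mathcal{Z}$-stability via \cite[Theorem 4.1]{HO}, use \autoref{lma:Kishnoncom} to pass to a nonzero positive element of $A$, produce the order zero map via \autoref{prop:avgord0} with tolerance below the (strictly positive) infimum of the rank function, and close the Cuntz comparison gap with \autoref{thm:wComp} and \autoref{rem: DyncomVScom}. The only cosmetic differences are that you spell out the commutator estimate for general elements of $F$ (which the paper absorbs into a ``without loss of generality'') and take the infimum of $d_\tau(a')$ over all of $T(A)$ rather than just $T(A)^\alpha$, which is harmless.
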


\begin{proof}
The assumptions of the theorem, together with outerness of
$\alpha$, imply that $A\rtimes_\alpha G$ is a simple, separable, unital $C^*$-algebra (see \cite[Theorem~3.1]{KishOut}), which is also nuclear when $A$ is nuclear. Thus, by \cite[Theorem~4.1]{HO}, it is sufficient to prove that 
$A \rtimes_\alpha G$ is tracially $\mathcal{Z}$-stable.

Let $d\in\mathbb{N}$, let $F\subseteq A\rtimes_\alpha G$ be a finite subset, let $\ep>0$, and let $b\in \left(A\rtimes_\alpha G\right)_+\setminus\{0\}$. 
Without loss of generality, we may assume that there 
exist finite subsets $F_0\subseteq A$ and $K\subseteq G$ such that
$F=F_0\cup \{u_k\}_{k\in K}$. By \autoref{lma:Kishnoncom}, there exists $a\in A_+\setminus\{0\}$ such that $a\precsim b$ in $A\rtimes_\alpha G$.

Note that $T(A)^\alpha\neq \emptyset$ because $G$ is amenable.
Set $\delta\coloneqq \inf_{\tau\in T(A)^{\alpha}} d_\tau(a)$.
Since the rank function $\mathrm{rk}(a)\colon T(A)\to [0,\infty]$ is lower semicontinuous, it attains its minimum on compact subsets. Simplicity of $A$ therefore implies that $\delta>0$. 
Set $\eta= \min\{\ep,\delta/2\}$.
    
Apply \autoref{prop:avgord0} to find a completely positive contractive order zero map $\rho\colon M_d\to A$ satisfying
		\begin{enumerate}
		\item $\|\rho(x)y-y\rho(x)\|<\eta\|x\|$ for all $x\in M_d$ and all $y\in F_0$,
		\item $\|\alpha_k(\rho(x))-\rho(x)\|<\eta\|x\|$ for all $x\in M_d$ and all $k\in K$,
		\item $d_\tau(1-\rho(1))<\eta$ for all $\tau\in T(A)^\alpha$.
	\end{enumerate}
	It follows from (3) and \autoref{thm:wComp} that $1-\rho(1)\precsim_\alpha a$,
	and hence $1-\rho(1)\precsim b$ in $A\rtimes_\alpha G$ by \autoref{rem: DyncomVScom}.
Since properties (1) and (2) imply that 
\[\|\rho(x)c-c\rho(x)\|<\ep\|x\|\] 
for all $x\in M_d$ and all $c\in F$,
we conclude that 
$A\rtimes_\alpha G$ is tracially $\mathcal{Z}$-stable, as desired. 	
\end{proof}

We isolate the following useful corollary.

\begin{cor}\label{thm:main2}
Let $G$ be a countable, discrete, amenable group, let $A$ be a simple, separable, stably finite, unital, nuclear, $\mathcal{Z}$-stable
\cstar-algebra, and 
let $\alpha \colon G \to \mathrm{Aut}(A)$ be an outer 
action such that $(A,\alpha)$ has the McDuff property with respect to invariant traces. 
Then $A \rtimes_\alpha G$ is a simple,
separable, unital, nuclear, $\mathcal{Z}$-stable
$C^*$-algebra. 
\end{cor}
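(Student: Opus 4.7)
The plan is to verify the hypotheses of \autoref{thm:main1} and then invoke it directly. The corollary is a specialization of that theorem to the setting where $A$ is additionally assumed to be nuclear, stably finite, and $\mathcal{Z}$-stable (in place of the more abstract conditions of strict comparison, all ranks realized, and $QT(A)=T(A)\neq\emptyset$ appearing in \autoref{thm:main1}). So the task reduces to checking that these three latter conditions follow automatically from the assumptions of the corollary.

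First, since $A$ is unital, nuclear, and stably finite, the Blackadar--Handelman theorem guarantees that $T(A)\neq\emptyset$, and Haagerup's theorem (nuclearity implies exactness, which implies that every quasitrace is a trace) gives $QT(A)=T(A)$. Second, strict comparison for $A$ follows from simplicity, unitality, exactness, and $\mathcal{Z}$-stability by \cite[Theorem~4.5]{Ror_2004} (this is the same reasoning already used in the proof of \autoref{cor:thmB}). Third, to verify that all ranks are realized, note that by \cite[Theorem~6.7]{Ror_2004} the hypotheses of the corollary imply that $A$ has stable rank one, and then \cite[Theorem~8.11]{ThielSR1} applies.

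With these three properties in hand, together with outerness of $\alpha$ and the assumed McDuff property with respect to invariant traces, \autoref{thm:main1} applies and yields that $A\rtimes_\alpha G$ is tracially $\mathcal{Z}$-stable, and in fact $\mathcal{Z}$-stable, since $A$ (and hence the crossed product, by amenability of $G$) is nuclear. The remaining structural properties of $A\rtimes_\alpha G$ come for free: separability and unitality are immediate; nuclearity of the crossed product follows from amenability of $G$ and nuclearity of $A$; and simplicity follows from outerness of $\alpha$ together with simplicity of $A$ by Kishimoto's theorem \cite{KishOut}.

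There is no substantial obstacle: all the real work has already been done, both in \autoref{thm:main1} (which in turn relies on \autoref{thm:wComp} and \autoref{prop:avgord0}) and in the structural results of R{\o}rdam, Thiel, Haagerup, Blackadar--Handelman, and Kishimoto cited above. The proof is essentially a matter of collecting these references and checking that the hypotheses match.
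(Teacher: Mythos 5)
Your proposal is correct and matches the paper's intended argument exactly: the paper states this as an immediate corollary of \autoref{thm:main1}, with the hypotheses $QT(A)=T(A)\neq\emptyset$, strict comparison, and realization of all ranks supplied by precisely the results of Haagerup, Blackadar--Handelman, R{\o}rdam, and Thiel that you cite (the same combination already used in the proof of \autoref{cor:thmB}). No gaps.
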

Note that the same corollary holds when $A$ is purely infinite, using \autoref{lma:Kishnoncom}.

\section{The weak Rokhlin property with respect to invariant traces} \label{S.rokhlin}

In this section we show that dynamical systems $(A, \alpha)$ such that $A$ is uniformly McDuff
and such that $\alpha$ satisfies a suitable Rokhlin-type condition, have the McDuff property with respect to invariant traces. We begin by
recalling the definition of uniform McDuffness.

\begin{df}[{\cite[Definition 4.2]{cetw}}] \label{df:McDuff}
Let $A$ be a separable, unital $C^*$-algebra with non-empty trace space. We say that $A$ is \emph{uniform McDuff} if for every $d \in \mathbb{N}$
there exists a unital embedding $M_d \to A^\cU \cap A'$.
\end{df}

We now introduce the notion of \emph{weak Rokhlin property with respect to invariant traces}.
Before doing so, remember that for a countable, discrete group $G$, a finite subset $K \subseteq G$, and $\ep>0$, we say that a finite subset $S\subseteq G$ is \emph{$(K,\ep)$-invariant} if 
	\[\Big\lvert \bigcap_{k\in K\cup\{e\}} kS\Big \rvert\geq (1-\ep)\left|S \right| . \]

\begin{df}\label{df:iwRp}
Let $\alpha\colon G\to\Aut(A)$ be an action of
a discrete, amenable group $G$ on a unital $C^*$-algebra $A$. 
We say that $\alpha$ has the \emph{weak Rokhlin property with respect to invariant traces}
if for every finite subset $K\subseteq G$ and every $\ep>0$, there
exist $(K,\ep)$-invariant finite subsets $S_1,\ldots,S_m\subseteq G$
and pairwise orthogonal positive contractions $a_{g,i}\in A^\mathcal{U}\cap A'$,
for $i=1,\ldots,m$ and $g\in S_i$, such that
\begin{enumerate}
\item \label{item:wrp1} $\alpha_g^{\mathcal{U}}(a_{h,i})=a_{gh,i}$, for all $i=1,\ldots,m$, $g\in G$ and $h\in S_i$ for which $gh\in S_i$,
\item \label{item:wrp2} $\tau\big(1-\sum_{i=1}^m\sum_{g\in S_i}a_{g,i}\big)<\ep$, for all $\tau\in T(A)^\alpha$.
\end{enumerate}
\end{df}

The weak Rokhlin property with respect to invariant traces differs from the weak (tracial)
Rokhlin property from \cite{GHV} or \cite{HO}, in that the 
remainder is only assumed to be small with respect to 
\emph{invariant} traces, and not all traces. This 
is not a minor difference, and actions with the
weak Rokhlin property with respect to invariant traces are easier to construct (see \autoref{prop:uRpiwRp}). The results in this section, and in 
particular $\mathcal{Z}$-stability of the crossed products (see \autoref{cor:iwRp}),
are new even for actions which have the Rokhlin properties considered in \cite{GHV} and \cite{HO}.

Observe that actions with the weak Rokhlin property with respect to invariant traces are outer.

\begin{lma}\label{lma:iwRpOuter}
	Let $G$ be a discrete, amenable group,
	let $A$ be a unital $C^*$-algebra, and let 
	$\alpha\colon G\to\Aut(A)$ be an action with the 
	weak Rokhlin property with respect to invariant traces. Then $\alpha$ is outer.
\end{lma}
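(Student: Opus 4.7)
The plan is to argue by contradiction: suppose there exist $g \in G \setminus \{e\}$ and a unitary $u \in A$ with $\alpha_g = \mathrm{Ad}(u)$. The crucial observation will be that $\alpha_g^{\mathcal{U}}$ acts trivially on $A^{\mathcal{U}} \cap A'$, since any central element commutes with the constant sequence representing $u$, and hence is fixed by $\mathrm{Ad}(u) = \alpha_g^{\mathcal{U}}$. I will apply the weak Rokhlin property with respect to invariant traces to $K = \{g\}$ and any $\epsilon \in (0, 1)$, extract a Rokhlin system, and then force it to be identically zero, thereby contradicting condition (2) of \autoref{df:iwRp}.

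\textbf{Step 1.} First, I would show that within each tower at least one Rokhlin element must vanish. Fix $i \in \{1, \ldots, m\}$. The $(\{g\}, \epsilon)$-invariance of $S_i$ with $\epsilon < 1$ gives $|S_i \cap g^{-1}S_i| = |S_i \cap gS_i| \geq (1-\epsilon)|S_i| > 0$, so some $h \in S_i$ satisfies $gh \in S_i$. Condition (1) of \autoref{df:iwRp} then yields $a_{gh, i} = \alpha_g^{\mathcal{U}}(a_{h, i})$, while the centrality of $a_{h,i}$ together with $\alpha_g = \mathrm{Ad}(u)$ gives $\alpha_g^{\mathcal{U}}(a_{h, i}) = a_{h, i}$. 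Combining these, $a_{h, i} = a_{gh, i}$; but these carry distinct labels because $g \neq e$, so pairwise orthogonality within $\{a_{k, i}\}_{k \in S_i}$ yields $a_{h, i}^2 = 0$, and hence $a_{h, i} = 0$.

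\textbf{Step 2.} Next, I would propagate this vanishing across the entire tower. For any $h' \in S_i$, I set $g' = h' h^{-1}$ so that $g'h = h' \in S_i$; condition (1) then gives $a_{h', i} = \alpha_{g'}^{\mathcal{U}}(a_{h, i}) = 0$. Iterating over all $i$, every $a_{h, i}$ vanishes, so $\sum_{i, h} a_{h, i} = 0$. Since $G$ is amenable, $T(A)^\alpha$ is non-empty; evaluating any such $\tau$ on $1 - 0 = 1$ yields $\tau(1) = 1 > \epsilon$, in direct contradiction to condition (2).

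\textbf{Main obstacle.} I expect no substantial difficulty here: the argument becomes short once one exploits that the equivariance in condition (1) holds \emph{universally} in the group parameter (not only for the particular $g$ with $\alpha_g$ inner), so that killing a single Rokhlin element in a tower via left-translation by $h' h^{-1}$ automatically kills the whole tower. Were condition (1) restricted to only a specific group element, one would be forced into a much more delicate analysis of the Rokhlin boundary, where the vanishing is only partial.
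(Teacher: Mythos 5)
Your proof is correct and follows essentially the same route as the paper's: both arguments exploit that $\alpha_g^{\mathcal U}$ fixes $A^{\mathcal U}\cap A'$ when $\alpha_g=\mathrm{Ad}(u)$, use $(\{g\},\ep)$-invariance to find $h\in S_i$ with $gh\in S_i$ and conclude $a_{h,i}=a_{gh,i}=0$ by orthogonality, and use the translation property in condition (1) to propagate vanishing through a whole tower. The only difference is the order of the two steps (the paper first locates a tower with all elements nonzero via condition (2) and then kills one element to get a contradiction, whereas you kill everything and then contradict condition (2)), which is immaterial.
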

\begin{proof}
	Let $k\in G\setminus\{e\}$ and assume by contradiction that $\alpha_k=\mathrm{Ad}(u)$, for some unitary $u\in A$. Find $(\{k\},1/2)$-invariant finite subsets $S_1,\ldots,S_m\subseteq G$
	and pairwise orthogonal positive contractions $a_{g,i}\in A^\mathcal{U}\cap A'$,
	for $i=1,\ldots,m$ and $g\in S_i$, such that
	\begin{enumerate}
		\item $\alpha_g^{\mathcal{U}}(a_{h,i})=a_{gh,i}$, for all $i=1,\ldots,m$, $g\in G$ and $h\in S_i$ for which $gh\in S_i$,
		\item $\tau\big(1-\sum_{i=1}^m\sum_{g\in S_i}a_{g,i}\big)<\ep$, for all $\tau\in T(A)^\alpha$.
	\end{enumerate}
	Note that condition~(1) implies that if $a_{s,i}=0$ for some $s\in S_i$ and $i\in \{1,\ldots,m\}$, then $a_{g,i}=0$, for all $g\in S_i$. Condition~(2) therefore forces the existence of $i\in\{1,\ldots,m\}$ for which $a_{g,i}\neq 0$, for every $g\in S_i$. By $(\{k\},1/2)$-invariance of $S_i$, there exists $s\in S_i$ such that $ks\in S_i$.
	Since $\alpha_k=\mathrm{Ad}(u)$ and $a_{s,i}\in A^\cU \cap A'$, condition~(1) implies that
	$a_{s,i}=a_{ks,i}$.
	Pairwise orthogonality implies in turn that $a_{s,i}=0$, contradicting the choice of $i$.
	
\end{proof}

The following proposition is inspired by Kerr's proof of \cite[Theorem~9.4]{Ker_dimension_2020} and isolates the most technical part of the averaging arguments needed in the main result of this section,
\autoref{lemma:cpcoz}.

\begin{prop}\label{lma:WeddingCake}
Let $G$ be a discrete, amenable group, let $A$ be a unital $C^*$-algebra,
let $\alpha\colon G\to\Aut(A)$ be an action with the weak Rokhlin property with respect to invariant traces, let $\ep>0$ and let $K\subseteq G$ be a finite subset.
Then there exist finite subsets $T_1,\ldots,T_m\subseteq G$, and pairwise orthogonal positive contractions $a_{g,i}\in A^\mathcal{U}\cap A'$, for 
$i=1,\ldots,m$ and $g \in T_i$, satisfying the following conditions:
\begin{enumerate}
\item \label{WClemma:i3} $\| \alpha_k^\mathcal{U} (a_{g,i}) - a_{kg,i}\| < \ep$, for all $i=1,\dots,m$, all $k \in K$ and all $g \in T_i\cap k^{-1}T_i$,
\item \label{WClemma:i4} $ \|  a_{g,i} \| < \ep$, for all $i=1,\dots,m$, and $g \in T_i$ such that $Kg\nsubseteq T_i$,
\item \label{WClemma:i1} $\Big\|1-\sum_{i=1}^m\sum_{g\in T_i}a_{g,i}\Big\|_{2,\tau}<\ep$, for all $\tau\in T(A)^\alpha$.
\end{enumerate}
\end{prop}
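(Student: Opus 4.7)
The approach is a ``wedding cake'' / Lipschitz smoothing argument inspired by Kerr's proof of \cite[Theorem~9.4]{Ker_dimension_2020}: I build the $a_{g,i}$'s as scalar multiples of the towers provided by the weak Rokhlin property with respect to invariant traces, using coefficients $f_i(g) \in [0,1]$ that interpolate from $0$ near the boundary of each tower to $1$ deep in its interior. The point is that the resulting maps are almost invariant (because the coefficients vary slowly along the $K$-orbit direction) and vanish on the boundary, at the cost of losing only a controlled amount of tracial mass.

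Concretely, I would symmetrize $K$ to $K' = K \cup K^{-1} \cup \{e\}$, fix $N \in \mathbb{N}$ with $N > 1/\ep$ and $\delta > 0$ with $2\delta < \ep^2$, and apply \autoref{df:iwRp} with parameters $\bigl((K')^N, \delta\bigr)$. This yields $\bigl((K')^N, \delta\bigr)$-invariant finite sets $\hat S_1,\dots,\hat S_m \subseteq G$ and pairwise orthogonal positive contractions $\hat a_{g,i} \in A^\cU \cap A'$ with the exact equivariance $\alpha_g^\cU(\hat a_{h,i}) = \hat a_{gh,i}$ whenever $h, gh \in \hat S_i$, and with $\tau\bigl(1 - \sum_{i,g}\hat a_{g,i}\bigr) < \delta$ for every $\tau \in T(A)^\alpha$. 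Then I define the depth
\[
d_i(g) = \min\bigl\{N,\ \max\{j \geq 0 : (K')^j g \subseteq \hat S_i\}\bigr\}
\]
for $g \in \hat S_i$, put $f_i(g) = d_i(g)/N$, and finally set $T_i = \hat S_i$ and $a_{g,i} = f_i(g)\,\hat a_{g,i}$.

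The three conditions would then be verified as follows. For \eqref{WClemma:i3}, the exact equivariance of the $\hat a_{g,i}$ reduces the estimate to bounding $|f_i(g) - f_i(kg)|$ for $g, kg \in \hat S_i$ and $k \in K \subseteq K'$; the key Lipschitz observation is that $(K')^j g \subseteq \hat S_i$ forces $(K')^{j-1}(kg) \subseteq (K')^j g \subseteq \hat S_i$, so $d_i(kg) \geq d_i(g) - 1$, and symmetry of $K'$ yields the reverse, giving $|f_i(g) - f_i(kg)| \leq 1/N < \ep$. For \eqref{WClemma:i4}, if $g \in T_i$ and $Kg \not\subseteq T_i$, then $(K')^1 g \not\subseteq \hat S_i$, hence $d_i(g) = 0$ and $a_{g,i} = 0$. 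For \eqref{WClemma:i1}, I would write $X = 1 - \sum_{i,g} a_{g,i} = (1 - \sum_{i,g} \hat a_{g,i}) + \sum_{i,g}(1 - f_i(g))\hat a_{g,i}$, which is a positive contraction, and then use pairwise orthogonality plus the fact that $\alpha$-invariance of $\tau$ forces $\tau(\hat a_{g,i})$ to depend only on $i$; the boundary bound $|\{g \in \hat S_i : d_i(g) < N\}| \leq \delta |\hat S_i|$ (provided by $\bigl((K')^N, \delta\bigr)$-invariance) then gives $\tau(X) < 2\delta$, so $\|X\|_{2,\tau}^2 \leq \tau(X) < \ep^2$.

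The main obstacle, and the point where care is needed, is balancing the three competing parameters: the depth $N$ must be large enough for the Lipschitz constant $1/N$ to beat the desired equivariance error, the invariance parameter must be measured with respect to the enlarged set $(K')^N$ (not just $K$), and the tracial error $\delta$ has to accommodate both the original tracial coverage loss and the additional loss from the scalar smoothing near the tower boundary. The symmetric choice of $K'$ is precisely what produces the clean two-sided Lipschitz estimate for $d_i$, without which condition~\eqref{WClemma:i3} would fail.
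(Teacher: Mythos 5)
Your proposal is correct and follows essentially the same route as the paper's proof: both apply the weak Rokhlin property to the enlarged window $(K')^N$ (the paper uses $K^n$ after symmetrizing $K$) and then rescale the tower elements by a $1/N$-Lipschitz ramp function that vanishes near the boundary of each shape, so that exact equivariance of the tower plus the Lipschitz bound gives condition (1), the boundary vanishing gives (2), and the F\o lner-type count of the non-full levels gives (3). The only cosmetic difference is that you parametrize the ramp by depth measured inward from the boundary of $\hat S_i$, whereas the paper partitions $S_i'$ into shells $T^{(i)}_j$ around the core $\bigcap_{g\in K^n} gS_i'$ and uses the coefficient $1-j/n$; the estimates are the same.
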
 

\begin{proof}
We may assume that $\ep<1$. Choose $n\in\mathbb{N}$ with $n > 1/\ep$.
Without loss of generality, we 
assume that $K$ is symmetric and contains the identity of $G$.
Let $0<\delta<1-(1-\ep^2)^{1/2}$. By definition, any $(K^n, \delta)$-invariant non-empty finite set $S \subseteq G$ satisfies
	\[
	\Big \lvert \bigcap_{g \in K^n} g S \Big \rvert > (1 - \ep^2)^{1/2} \lvert S \rvert.
	\]

Use the weak Rokhlin property with respect to invariant traces to find 
$(K^n,\delta)$-invariant finite subsets $S'_1,\ldots,S'_m\subseteq G$
and pairwise orthogonal positive contractions $a'_{g,i}\in A^\mathcal{U}\cap A'$,
for $i=1,\ldots,m$ and $g\in S'_i$, such that
\begin{enumerate}[label=(\alph*)]
\item \label{item:aa} $\alpha_g^{\mathcal{U}}(a'_{h,i})=a'_{gh,i}$, 
for all $i=1,\ldots,m$, $g\in G$, and $h\in S'_i$ for which $gh\in S'_i$.
\item  \label{item:bb} $\tau\Big(1-\sum_{i=1}^m\sum_{g\in S'_i}a'_{g,i}\Big)<\delta$, for all $\tau\in T(A)^\alpha$.
\end{enumerate}
Upon translating the sets $S'_1,\ldots,S'_m$, we may without loss of generality 
assume that they all contain the identity $e$ of $G$. 

For every $i = 1, \dots, m$, we set $T^{(i)}_0 = \bigcap_{g \in K^n} gS_i'$. 
For $j=0 ,\ldots, n-1$, we inductively define
\[
T^{(i)}_{j+1} := K^{j+1} T^{(i)}_0 \setminus K^j T^{(i)}_0.
\]
By our choice of $\delta > 0$, we have
\begin{equation} \label{eq:Ti}
\big| T^{(i)}_0 \big| > (1-\ep^2)^{1/2}\big| S^\prime_i \big|.
\end{equation}
Moreover, given $i=1,\ldots,m$ and $k \in K$, we have
\begin{align} \label{eq:kTi}
kT^{(i)}_0 \subseteq T^{(i)}_1\sqcup T^{(i)}_0 \ \mbox{ and } \
kT^{(i)}_j \subseteq T^{(i)}_{j+1} \sqcup T^{(i)}_j \sqcup T^{(i)}_{j-1}=K^{j+1}T^{(i)}_0\setminus K^{j-2}T^{(i)}_0,
\end{align}
for $j=1,\ldots,n-1$. This is the case since for every $g \in T^{(i)}_j$ we have that $kg \in K^{j+1}T^{(i)}_0$, while if we had $kg \in K^{j-2}T^{(i)}_0$ then $g$ would belong to $K^{j-1}T^{(i)}_0$, as $K$ is symmetric, contradicting the membership of $g$ in $T^{(i)}_j$.

For $i=1,\ldots,m$, set $S_i =\bigsqcup_{j=0}^n T^{(i)}_j$. Then $S_i=K^nT^{(i)}_0$, and thus $S_i\subseteq S_i'$. Moreover, we set $T_i =\bigsqcup_{j=0}^{n-1} T^{(i)}_j$, and 
note that $KT_i\subseteq S_i$ by \eqref{eq:kTi}. 
For $j=0,\ldots,n-1$ and $g \in T^{(i)}_j$, define
\[
a_{g,i} = \Big(1 - \frac{j}{n}\Big)a'_{g,i}\in A^\mathcal{U}\cap A',
\]
and note that these are pairwise orthogonal positive contractions.
We claim that these elements satisfy the conditions in the statement. 

To verify \eqref{WClemma:i3}, let $i=1,\ldots,m$, let $k \in K$, and let 
$g\in T_i\cap k^{-1}T_i$. Suppose that $g \in T^{(i)}_j$ for some 
$j=0,\ldots, n-1$. By \eqref{eq:kTi}, we have 
\[kg \in \begin{cases} 
          T^{(i)}_j \sqcup T^{(i)}_{j+1} & \mathrm{ if } \ j=0,\\
          T^{(i)}_{j-1} \sqcup T^{(i)}_j \sqcup T^{(i)}_{j+1} & \mathrm{ if } \ j=1,\ldots,n-2,\\
          T^{(i)}_{j-1} \sqcup T^{(i)}_{j} & \mathrm{ if } \ j=n-1.
         \end{cases}\]
Thus, using condition \ref{item:aa} in the definition of weak Rokhlin property with respect to invariant traces, and the fact that $g,kg\in S'_i$, we get
	\begin{equation}\label{eq:fkg}
	\big\|\alpha_k^\mathcal{U}(a_{g,i}) - a_{kg,i} \big\| \le \frac{1}{n} < \ep,
	\end{equation}
as desired.
To check item \eqref{WClemma:i4}, let $i=1,\ldots,m$, 
let $g \in T_i$ and choose $k \in K$ with $kg \notin T_i$. Then $kg \in S_i\setminus T_i= T^{(i)}_n$, and by \eqref{eq:kTi} we deduce that $g\in T^{(i)}_{n-1}$. 
Thus, $\|a_{g,i}\|\leq \frac{1}{n}<\ep$ by construction.

Finally, to check \eqref{WClemma:i1}, let $\tau \in T(A)^\alpha$. 
Set $a=\sum_{i=1}^m \sum_{g\in T_i}a_{g,i}$ and $a'=\sum_{i=1}^m \sum_{g\in S'_i}a'_{g,i}$. 
Denote $\tau_i=\tau(a'_{e,i})$, and note that $\tau(a'_{g,i})=\tau(\alpha_g^{\mathcal{U}}(a'_{e,i}))=\tau_i$, for $i=1,\ldots,m$ and $g\in S'_i$, using condition \ref{item:aa} and the fact that $e\in S_i'$.
We have 
\begin{align*}\label{eqn:tauff'}
\tau(a)\ &=\ \sum_{j=0}^{n-1}\sum_{i=1}^m \big(1-\frac{j}{n}\big)|T_j^{(i)}|\tau_i\\
&\geq \ \sum_{i=1}^m |T_0^{(i)}|\tau_i\\
&\stackrel{\mathclap{\eqref{eq:Ti}}}{>} \ \sum_{i=1}^m (1-\ep^2)^{1/2} |S_i'|\tau_i \\
&= \ (1-\ep^2)^{1/2}\tau(a').
\end{align*}
Therefore,
\begin{align*}
\|1-a\|^2_{2,\tau}&=\tau\big((1-a)^2\big)\leq \tau(1-a)\leq 1-(1-\ep^2)^{1/2}\tau(a')\\
&\stackrel{\mathclap{\ref{item:bb}}}{\leq} 1-(1-\ep^2)^{1/2}(1-\delta)\leq \ep^2, 
\end{align*}
as desired.
\end{proof}

In the next result, we use the 
weak Rokhlin property with respect to invariant traces 
to suitably average order zero 
maps $M_d\to A_{\mathcal{U}}\cap A'$ coming from uniform McDuffness
of $A$, in order to conclude McDuffness with respect to invariant traces of the 
action. This averaging process causes some loss of
information in terms of tracial smallness: 
if the original order zero map $\psi$
satisfies $1-\psi(1)\in J_{T(A)}$, then the averaged
order zero map $\varphi$ will only satisfy $1-\varphi(1)\in J_{T(A)^\alpha}$. By \autoref{thm:main1}, this is often
good enough to obtain $\mathcal{Z}$-stability of the 
crossed product.

\begin{thm} \label{lemma:cpcoz}
Let $G$ be a countable, discrete, amenable group, let 
$A$ be a separable, unital
\cstar-algebra, and let $\alpha\colon G\to \mathrm{Aut}(A)$ be an action
with the weak Rokhlin property with respect to invariant traces. 
Suppose that $T(A)\neq \emptyset$, and that $A$ is uniform McDuff. Then $(A,\alpha)$
has the McDuff property with respect to invariant traces.
\end{thm}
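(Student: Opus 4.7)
The plan is to produce, for each $d \in \N$, a unital homomorphism $M_d \to (A_\U \cap A')^{\alpha_\U}/J_{T(A)^\alpha}$. By projectivity of the cone $C_0((0,1]) \otimes M_d$ (which lifts any such unital homomorphism to a c.p.c.\ order zero map into $(A_\U \cap A')^{\alpha_\U}$ whose unital defect lies in $J_{T(A)^\alpha}$), combined with a standard reindexing argument exploiting separability of $A$ and countability of $G$, the task reduces to the following sequence-level claim: for every $d \in \N$, every $\ep > 0$, every finite $F \subseteq A$, and every finite symmetric $K \subseteq G$ containing $e$, there is a c.p.c.\ order zero map $\varphi \colon M_d \to A$ satisfying (i) $\|[\varphi(x), f]\| < \ep \|x\|$ for $x \in M_d$ and $f \in F$, (ii) $\|\alpha_k(\varphi(x)) - \varphi(x)\| < \ep \|x\|$ for $x \in M_d$ and $k \in K$, and (iii) $\|1 - \varphi(1)\|_{2, \tau} < \ep$ for all $\tau \in T(A)^\alpha$.

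Such $\varphi$ is produced by averaging a uniformly McDuff embedding along a Rokhlin-type tower. Fix a small parameter $\eta > 0$ to be calibrated after the tower is chosen. Apply \autoref{lma:WeddingCake} to $K$ and $\eta$, and lift the resulting data from $A^\U \cap A'$ to $A$: this yields finite subsets $T_1, \ldots, T_m \subseteq G$ together with positive contractions $a_{g,i} \in A$ (for $i = 1, \ldots, m$ and $g \in T_i$) that are $\eta$-pairwise orthogonal, $\eta$-commute with $F$, satisfy $\|\alpha_k(a_{g,i}) - a_{kg,i}\| < \eta$ on the interior $T_i \cap k^{-1}T_i$, have $\|a_{g,i}\| < \eta$ on the boundary $\{g \in T_i : Kg \not\subseteq T_i\}$, and satisfy $\|1 - \sum_{i,g} a_{g,i}\|_{2, \tau} < \eta$ for every $\tau \in T(A)^\alpha$. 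Enlarge $F$ to the finite set
\[
F' = F \cup \{a_{g,i}\}_{i,g} \cup \{\alpha_{g^{-1}}(f) : f \in F,\ g \in T_1\cup\cdots\cup T_m\} \cup \{\alpha_{g^{-1}}(a_{g,i})\}_{i, g \in T_i},
\]
and use uniform McDuffness of $A$ to find a c.p.c.\ order zero map $\psi \colon M_d \to A$ that $\eta$-commutes with $F'$ and satisfies $\|1 - \psi(1)\|_{2, \tau} < \eta$ for every $\tau \in T(A)$. Finally, set
\[
\varphi(x) = \sum_{i=1}^m \sum_{g \in T_i} a_{g,i}^{1/2} \, \alpha_g(\psi(x)) \, a_{g,i}^{1/2}, \qquad x \in M_d.
\]

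The verification of (i)--(iii) proceeds as follows. The map $\varphi$ is approximately c.p.c.\ order zero: for $xy = 0$ we have $\psi(x)\psi(y) = 0$, and expanding $\varphi(x)\varphi(y)$ using the $\eta$-orthogonality of the $a_{g,i}$ and the approximate commutation of $\psi(x)$ with each $\alpha_{g^{-1}}(a_{g,i})$ (which is the reason these are in $F'$) makes every term vanish up to $O(\eta)$-errors; a perturbation via functional calculus for order zero maps (\autoref{df:FuncCalcOz}) then produces an actual c.p.c.\ order zero map. Condition (i) follows because $\alpha_g(\psi(x))$ approximately commutes with $f$ (since $\psi$ commutes with $\alpha_{g^{-1}}(f) \in F'$ and $\alpha_g$ is isometric), while the $a_{g,i}$ themselves commute with $f$. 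For (ii), split $T_i = (T_i \cap k^{-1}T_i) \sqcup (T_i \setminus k^{-1}T_i)$: on the interior, property (1) of \autoref{lma:WeddingCake} combined with the reindexing $h = kg$ matches the interior portion of $\alpha_k(\varphi(x))$ with that of $\varphi(x)$, and the two boundary sums are controlled in norm by $\max\{\|a_{g,i}\| : Kg \not\subseteq T_i\} \cdot \|x\| < \eta \|x\|$ thanks to pairwise orthogonality and property (2). For (iii), $\psi(1) \approx 1$ in every seminorm $\|\cdot\|_{2, \tau}$ gives $\varphi(1) \approx \sum_{i,g} a_{g,i}$ in $\|\cdot\|_{2, \tau}$ for every invariant $\tau$, and property (3) then yields $\|1 - \varphi(1)\|_{2, \tau} < O(\eta)$.

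The main technical hurdle is the calibration of $\eta$: once the tower sizes $|T_i|$ and the number of towers $m$ are fixed, $\eta$ must be chosen small enough that all accumulated errors (from boundary contributions scaling with $|T_i|$, from the $\eta$-defect of orthogonality, and from commutator defects) collectively stay below $\ep$. This is routine bookkeeping, entirely parallel to the Matui--Sato-style averaging arguments used in the related results on Rokhlin-type properties and $\mathcal{Z}$-stability.
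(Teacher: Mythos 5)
Your core strategy --- averaging a uniform McDuff embedding over the tower produced by \autoref{lma:WeddingCake} and checking approximate invariance, approximate centrality, and tracial largeness of the unit defect --- is exactly the paper's strategy, and your verification of conditions (i)--(iii) mirrors the paper's almost line by line. The difference is purely one of execution level, and it matters. The paper never leaves the ultrapower: it takes a unital homomorphism $\lambda\colon M_d\to A^\cU\cap A'$, keeps the tower elements $a_{g,i}$ in $A^\cU\cap A'$, and uses a reindexation argument to arrange that the $a_{g,i}$ \emph{exactly} commute with $\bigcup_{g}\alpha_g^\cU(\lambda(M_d))$ and are \emph{exactly} pairwise orthogonal. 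Consequently $\psi(x)=\sum_{i,g}a_{g,i}\,\alpha_g^\cU(\lambda(x))$ (no square-root sandwiching needed) is an honest c.p.c.\ order zero map, and the only lifting step is the final application of projectivity of the cone together with \autoref{lemma:surjective}. By lifting the tower to $A$ first, you trade these exact relations for $\eta$-approximate ones, and this creates the two weak points of your write-up. First, your perturbation of the approximately order zero map $\varphi$ to an exact one is not delivered by \autoref{df:FuncCalcOz}, which only provides functional calculus for maps that are \emph{already} order zero; the correct tool is weak stability of the order zero relations for $M_d$ (equivalently, projectivity of $C_0((0,1])\otimes M_d$), which is standard but should be invoked by name, and one must also note that approximate orthogonality of the $a_{g,i}$ only yields approximate orthogonality of the $a_{g,i}^{1/2}$ after a polynomial-approximation estimate. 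Second, your calibration of $\eta$ is circular as stated: you apply \autoref{lma:WeddingCake} with parameter $\eta$ and then propose to choose $\eta$ after seeing $m$ and $|T_i|$. This is repairable (the errors from conditions (1)--(3) of the lemma do not accumulate over the tower thanks to orthogonality, so they can be fixed at $\ep/3$ in advance; only the lifting errors, which can be made arbitrarily small after the tower is fixed, scale with $\sum_i|T_i|$), but the repair is precisely the bookkeeping that the ultrapower formulation renders unnecessary. In short: same proof, but the paper's ordering of quantifiers --- build everything in $A^\cU\cap A'$, lift once at the very end --- is what lets it avoid both the perturbation lemma and the calibration issue.
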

\begin{proof}
Let $d\in\mathbb{N}$, let $\ep>0$, and let $K\subseteq G$ be a finite symmetric subset. 
We will prove that there exists a completely positive 
contractive order zero map $\psi \colon M_d \to A^\cU \cap A'$ such that
	\begin{enumerate}
	\item \label{claim1:a} $\| \alpha_k^\cU(\psi(x)) - \psi(x) \| < \ep \| x \|$, for all $x \in M_d$ and $k \in K$,
	\item \label{claim1:b} $\| 1 - \psi(1) \|_{2, \tau} < \ep$, for every $\tau \in T(A)^\alpha$.
	\end{enumerate}
With $\kappa\colon A_\cU \to A^\cU$ denoting the canonical
quotient map,
a standard diagonalization argument then yields a completely positive contractive order zero map 
$\Psi \colon M_d \to \left(A^\cU \cap A'\right)^{\alpha^\cU}$ such that $1 - \Psi(1) \in \kappa\left(J_{T(A)^\alpha}\right)$. 
Once $\Psi$ is constructed, we use \cite[Proposition~1.2.4]{Win_covii} and \autoref{lemma:surjective} in order to obtain a completely positive contractive order zero map $\varphi\colon M_d\to \left(A_\cU \cap A'\right)^{\alpha_\cU}$ making the following diagram commute

\begin{equation*}
\xymatrix{
 && \left(A_\cU \cap A'\right)^{\alpha_\cU} \ar[d]^-{\kappa} \\%
M_d \ar[rr]_-{\Psi} \ar[urr]^-{\varphi} && \left(A^\cU \cap A'\right)^{\alpha^\cU}.
}
\end{equation*}
One readily checks that $\varphi$ satisfies $1-\varphi(1)\in J_{T(A)^\alpha}$.
		
We now construct the map $\psi$. To start, it follows from uniform McDuffness of $A$ that 
there exists a unital $*$-homomorphism $\lambda \colon M_d \to A^\cU \cap A'$ (see \cite[Definition~4.2]{cetw}).
Applying \autoref{lma:WeddingCake}, find finite subsets $T_1,\ldots,T_m\subseteq G$, 
and pairwise orthogonal positive contractions $a_{g,i}\in A^\mathcal{U}\cap A'$, 
for $i=1,\ldots,m$ and $g \in T_i$,
such that 
\begin{enumerate}[label=(\roman*)]
\item \label{claim1:i} $\| \alpha_k^\mathcal{U} (a_{g,i}) - a_{kg,i}\| < \frac\ep 3$, for all $i=1,\dots,m$, all $k \in K$ and all $g \in T_i\cap k^{-1}T_i$,
\item \label{claim1:ii} $ \|  a_{g,i} \| < \frac \ep 3$, for all $i=1,\dots,m$, and $g \in T_i$ such that $Kg\nsubseteq T_i$,
\item \label{claim1:iii} $\Big\|1-\sum_{i=1}^m\sum_{g\in T_i}a_{g,i}\Big\|_{2,\tau}<\ep$, for all $\tau\in T(A)^\alpha$.
\end{enumerate}
Moreover, letting $M =\bigcup_{i=1}^m\bigcup_{g \in T_i\cup KT_i} \alpha_g^\cU(\lambda(M_d))$, we may assume that $a_{g,i}\in A^{\mathcal{U}}\cap M'$, for $i=1,\ldots,m$ and $g\in T_i$, using a standard reindexation argument.
 
Define $\psi\colon M_d\to A^\cU\cap A'$ by
	\[
	\psi(x)  = \sum_{i=1}^m \sum_{g \in T_i} a_{g,i} \alpha_g^\cU(\lambda(x)),
	\]
for all $x\in M_d$. Using that the elements $a_{g,i}$, for 
$i=1,\ldots ,m$ and $g \in T_i$, are pairwise orthogonal positive contractions which commute with elements of $M$, one 
easily sees that $\psi$ is a completely positive contractive order zero map.
	
We now verify the inequality in item \eqref{claim1:a}. Fix $x \in M_d$ and $k \in K$. Without loss of generality, we assume that $\|x\|\leq 1$. 
Using at the third step the fact that all summands are mutually orthogonal (to see that, recall that the $a_{g,i}$ commute with $M$), we get
	\begin{align*}
 \alpha_k^\cU(\psi(x))&=
 \sum_{i=1}^m \sum_{g \in T_i}\alpha_k^\mathcal{U}(a_{g,i}) \alpha_{kg}^\cU(\lambda(x))\\
 &\stackrel{\ref{claim1:ii}}{\approx_{\frac{\varepsilon}{3}}} \sum_{i=1}^m\sum_{g \in T_i\cap k^{-1}T_i } \alpha_k^\mathcal{U}(a_{g,i})\alpha_{kg}^\cU(\lambda(x))\\
&\stackrel{{\ref{claim1:i}}}{\approx_{\frac{\varepsilon}{3}}}
\sum_{i=1}^m\sum_{g \in T_i\cap k^{-1}T_i} a_{kg,i}\alpha_{kg}^\cU(\lambda(x))\\
&\stackrel{{\ref{claim1:ii}}}{\approx_{\frac{\varepsilon}{3}}} \psi(x).
\end{align*}

	Finally, the inequality in \eqref{claim1:b} is an immediate consequence of condition~\ref{claim1:iii}. 
\end{proof}

Combining the results of this section with those proved in the previous ones,
we obtain the following corollary.

\begin{cor}\label{cor:iwRp}
Let $G$ be a countable, discrete, amenable group, let $A$ be a simple, separable, stably finite, unital, nuclear, $\mathcal{Z}$-stable
\cstar-algebra, and 
let $\alpha \colon G \to \mathrm{Aut}(A)$ be an  
action with the weak Rokhlin property with respect to invariant traces. 
Then $A \rtimes_\alpha G$ is a simple,
separable, unital, nuclear, $\mathcal{Z}$-stable
$C^*$-algebra. 
\end{cor}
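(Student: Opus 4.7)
The proof is essentially an assembly of the three main technical results of the paper, so the plan is to trace through which hypotheses feed into which theorem and then check that the chain closes.

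First, I would verify that $A$ itself is uniform McDuff in the sense of \autoref{df:McDuff}. The $\mathcal{Z}$-stability of $A$ provides, for every $d \in \mathbb{N}$, a unital \hm{} $\mathcal{Z} \to A_{\cU} \cap A'$ (via the identification $A \cong A \otimes \mathcal{Z}^{\otimes \infty}$ and a standard reindexing argument). Composing with any unital \hm{} $M_d \to \mathcal{Z}$ (existing because $\mathcal{Z}$ absorbs $M_d$ asymptotically) produces a unital \hm{} $M_d \to A_{\cU} \cap A'$. Projecting along $\kappa\colon A_{\cU} \to A^{\cU}$ and using that $\|1\|_{2,\tau} = 1$ for any $\tau \in T(A)$ (non-empty because $A$ is stably finite unital nuclear), the induced map $M_d \to A^{\cU} \cap A'$ is still unital, giving uniform McDuffness of $A$.

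Next, since $\alpha$ has the weak Rokhlin property with respect to invariant traces and $A$ is uniform McDuff, \autoref{lemma:cpcoz} applies directly to conclude that $(A,\alpha)$ has the McDuff property with respect to invariant traces. Simultaneously, \autoref{lma:iwRpOuter} ensures that $\alpha$ is outer, since the weak Rokhlin property with respect to invariant traces forces outerness (by the pigeonhole/orthogonality argument proved there).

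Finally, having established outerness of $\alpha$ and McDuffness of $(A, \alpha)$ with respect to invariant traces, all hypotheses of \autoref{thm:main2} are met: $G$ is countable, discrete, amenable; $A$ is simple, separable, stably finite, unital, nuclear, and $\mathcal{Z}$-stable. That corollary then delivers the conclusion that $A \rtimes_\alpha G$ is simple, separable, unital, nuclear, and $\mathcal{Z}$-stable.

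The proof has no real obstacle beyond checking these hypotheses match, which is the point of the setup in \autoref{S.4} and \autoref{S.rokhlin}. The only substantive verification is the first step (uniform McDuffness of $A$), and even that is standard for simple, separable, unital, nuclear, $\mathcal{Z}$-stable \cas{} with a trace; it could alternatively be extracted from \cite[Proposition~4.5]{cetw} or deduced from the fact that $A \cong A \otimes \mathcal{Z}$ has McDuff-like behaviour in its central sequence algebra. Once this is in hand, the corollary is a one-line application of \autoref{lemma:cpcoz} followed by \autoref{thm:main2}.
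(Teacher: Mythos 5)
Your overall route is exactly the paper's: deduce uniform McDuffness of $A$ from $\mathcal{Z}$-stability, invoke \autoref{lma:iwRpOuter} for outerness of $\alpha$, apply \autoref{lemma:cpcoz} to get the McDuff property with respect to invariant traces, and conclude with \autoref{thm:main2}. That assembly is correct and is precisely how the paper argues (it quotes the proof of (i) $\Rightarrow$ (ii) in \cite[Proposition~5.12]{KirchRor_centr} for the first step).

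However, the explicit argument you give for the first step would fail. There is no unital homomorphism $M_d \to \mathcal{Z}$ for $d \ge 2$: the Jiang--Su algebra is projectionless, so it cannot contain $d$ mutually orthogonal nonzero projections summing to the unit. (It is not true that ``$\mathcal{Z}$ absorbs $M_d$ asymptotically''; what is true is that $\mathcal{Z}$ unitally contains the dimension-drop algebras $Z_{d,d+1}$, equivalently admits completely positive contractive order zero maps $M_d \to \mathcal{Z}$ with $\tau(1-\varphi(1))$ small.) As a consequence, the intermediate object you build --- a \emph{unital} homomorphism $M_d \to A_{\cU} \cap A'$ into the norm central sequence algebra --- does not exist in general: already for $A = \mathcal{Z}$, every projection in $A_{\cU}$ lifts to a sequence of projections in $\mathcal{Z}$, each of which is $0$ or $1$, so $A_{\cU} \cap A'$ contains no unital copy of $M_d$. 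The passage to the uniform tracial ultrapower $A^{\cU}$ is therefore not a cosmetic final step but the essential point of \autoref{df:McDuff}: only after quotienting by $J_{T(A)}$ does the image of $\mathcal{Z}$ acquire honest matrix units, because its weak closure in any tracial representation is the hyperfinite II$_1$ factor, which is McDuff. This is exactly what the references you mention in your fallback (\cite[Proposition~4.5]{cetw}, or \cite[Proposition~5.12]{KirchRor_centr}) supply. Since you flagged that the fact can be quoted from the literature, the proof is repairable by simply citing it; but as written, the construction via a unital $M_d \to \mathcal{Z}$ is wrong and should be removed.
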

\begin{proof} This follows from the fact that $\mathcal{Z}$-stability implies uniform McDuffness (see for example  the proof of (i) $\Rightarrow$ (ii) in \cite[Proposition 5.12]{KirchRor_centr}), together with \autoref{lma:iwRpOuter}, 
 \autoref{lemma:cpcoz} and  \autoref{thm:main2}.
\end{proof}

\section{Uniform Rokhlin property and finitely many extreme invariant traces} \label{s.URP}

In this section we employ our results, particularly \autoref{cor:iwRp} and \autoref{thm:main2}, to expand the current knowledge on Question
\ref{qstA}, and provide a positive answer for various actions which are not covered in the existing literature (particularly in \cite{GHV, Wouters}). As shown in \autoref{cor:uRp},
our main class of examples
comes from dynamical systems $(A, \alpha)$ for which $\partial_e T(A)$ is compact and such that the induced action $G \curvearrowright \partial_e T(A)$
has Niu's \emph{uniform Rokhlin property} (see \cite[Definition~3.1]{Niu_comparison_2019} or \autoref{def:URP}). As already discussed in the introduction, this family of examples includes all actions $(A,\alpha)$ of amenable groups $G$
for which $\partial_e T(A)$ is compact and finite-dimensional, and such that  $G \curvearrowright \partial_e T(A)$ is free (Corollary \ref{cor:E}), as well as some cases
where $\partial_e T(A)$ has infinite covering dimension (Corollary \ref{cor:Z}).

In the second part of this section we show that our methods also entail $\mathcal{Z}$-stability of crossed products as in Question
\ref{qstA} for which the set of invariant traces is a finite-dimensional simplex.

\subsection{Uniform Rokhlin property}
We begin with some definitions.

\begin{df}[{\cite[Definition~8.1]{Ker_dimension_2020}}]
Let $G$ be a countable, discrete group acting on a compact metrizable space $X$. 
An \textit{open castle} is a finite collection $\{\left(S_i,B_i\right)\}_{i=1}^m$, where $S_i\subseteq G$ are finite subsets, called \emph{shapes}, and $B_i\subseteq X$ are open subsets, called \emph{bases}, such that the sets $s_i B_i$, for $s_i\in S_i$ and $i=1,\ldots,m$, are pairwise disjoint.
\end{df}

For an action $G\curvearrowright X$ of a discrete group 
$G$ on a compact space $X$, we denote by $M_G(X)$ the set of $G$-invariant regular Borel probability measures on $X$.

\begin{df} [{\cite[Definition~3.1]{Niu_comparison_2019}}] \label{def:URP}
Let $G$ be a countable, discrete, amenable group and let $X$ be a compact metrizable space. An action
$G \curvearrowright X$ is said to have the \emph{uniform Rokhlin property} if for every $\ep>0$ and every finite subset $K \subseteq G$, there exists an open castle 
$\{(S_i,B_i)\}_{i=1}^m$ with $(K,\ep)$-invariant shapes such that 
\[\sup_{\mu\in M_G(X)}\mu\Big(X\setminus\bigsqcup_{i=1}^m \bigsqcup_{g \in S_i} g B_i\Big) <\ep.\]
\end{df}



In \autoref{def:URP} we may equivalently require that the castle $\{(S_i, B_i)\}_{i=1}^m$ is such that also
the closed sets $g \overline B_i$ are pairwise disjoint for all $i =1,\dots, m$ and $g \in S_i$. We make this explicit 
in the following lemma.


\begin{lma} \label{lemma:URPclosed}
Let $G$ be a countable, discrete, amenable group acting on a compact metrizable space $X$, let $K\subseteq G$ be a finite subset, let 
$\varepsilon>0$, and let
$\{ (S_i, B_i) \}_{i=1}^m$ be an open castle with $(K,\ep)$ invariant
shapes such that 
	\[
	\sup_{\mu\in M_G(X)}\mu\Big(X \setminus \bigsqcup_{i=1}^m \bigsqcup_{g \in S_i} g B_i \Big) < \ep.\]
Then there exist open sets $U_i$ satisfying $\overline{U}_i\subseteq B_i$ for all $i=1,\ldots,m$, such that
	\[
	\sup_{\mu\in M_G(X)}\mu\Big(X \setminus \bigsqcup_{i=1}^m \bigsqcup_{g \in S_i} g U_i \Big) < \ep.\]
In particular, the closed sets $g \overline U_i$ 
are pairwise disjoint for all $i =1,\dots, m$ and 
$g \in S_i$. 
\end{lma}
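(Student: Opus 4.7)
The plan is to shrink each base $B_i$ slightly from inside to obtain open sets $U_i$ satisfying $\overline{U}_i \subseteq B_i$ while preserving the measure estimate. The main subtlety is that the bound must hold uniformly over all invariant measures $\mu \in M_G(X)$, so a pointwise shrinking argument does not suffice; one needs to combine upper semicontinuity of $\mu \mapsto \mu(C)$ for closed $C$ with weak*-compactness of $M_G(X)$.

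First, I would fix a compatible metric $d$ on $X$ and define, for each $i = 1,\ldots, m$ and $n \in \mathbb{N}$,
\[
U_i^{(n)} = \{ x \in B_i : d(x, X \setminus B_i) > 1/n \}.
\]
These are open with $\overline{U_i^{(n)}} \subseteq B_i$, and $U_i^{(n)} \nearrow B_i$ as $n \to \infty$. Consequently, the sets $C_n = X \setminus \bigsqcup_{i=1}^m \bigsqcup_{g \in S_i} g U_i^{(n)}$ are closed, decrease in $n$, and satisfy
\[
\bigcap_{n \in \mathbb{N}} C_n = X \setminus \bigsqcup_{i=1}^m \bigsqcup_{g \in S_i} g B_i.
\]

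Next, I would exploit the fact that for each fixed closed set $C_n$ the map $f_n \colon M_G(X) \to [0,1]$ given by $f_n(\mu) = \mu(C_n)$ is upper semicontinuous with respect to the weak*-topology, and that the sequence $(f_n)_n$ decreases pointwise. The hypothesis combined with monotone convergence ensures that for every $\mu \in M_G(X)$ there is $n(\mu) \in \mathbb{N}$ with $f_{n(\mu)}(\mu) < \varepsilon$. By upper semicontinuity, each such $\mu$ admits a weak*-open neighborhood $V_\mu \subseteq M_G(X)$ on which $f_{n(\mu)}$ stays below $\varepsilon$. Since $M_G(X)$ is weak*-compact (and nonempty by amenability of $G$), finitely many $V_{\mu_1}, \ldots, V_{\mu_k}$ cover it; setting $n = \max_{1 \le j \le k} n(\mu_j)$ and using monotonicity of $f_n$ in $n$ yields $\sup_{\mu \in M_G(X)} \mu(C_n) < \varepsilon$.

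Finally, I would set $U_i = U_i^{(n)}$ for this $n$, giving the desired measure estimate with $\overline{U_i} \subseteq B_i$. Pairwise disjointness of the closed sets $g\overline{U_i}$ (for $i = 1, \ldots, m$ and $g \in S_i$) is immediate from the inclusion $g\overline{U_i} \subseteq g B_i$ and the castle property. The main obstacle is really just the uniformity step, which is handled cleanly by the compactness argument; everything else is elementary.
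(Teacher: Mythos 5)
Your argument is correct, and it reaches the same conclusion as the paper by a self-contained route. The paper disposes of the lemma in two lines: it sets $\delta=\ep-\sup_{\mu}\mu\bigl(X\setminus\bigsqcup_{i}\bigsqcup_{g\in S_i}gB_i\bigr)>0$, applies \cite[Proposition~3.4]{KerSza_almost_2020} to each tower $V_i=\bigsqcup_{s\in S_i}sB_i$ to obtain open sets $W_i$ with $\overline{W_i}\subseteq V_i$ and $\mu(V_i\setminus W_i)$ uniformly small over $M_G(X)$, and then pulls back along the translates via $U_i=\bigcup_{s\in S_i}s^{-1}(W_i\cap sB_i)$. What you have done is essentially to inline the proof of that cited proposition: your exhaustion $U_i^{(n)}\nearrow B_i$ by the sets at distance greater than $1/n$ from $X\setminus B_i$, combined with upper semicontinuity of $\mu\mapsto\mu(C_n)$ for the closed sets $C_n$ and weak$^*$-compactness of $M_G(X)$, is exactly the uniform inner regularity over invariant measures that the citation supplies. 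Your version is arguably cleaner in that you shrink each base $B_i$ directly rather than shrinking the towers and translating back (the latter needs the small extra check that $\overline{W_i\cap sB_i}\subseteq sB_i$, which uses disjointness of the open levels); note also that your argument uses neither invariance of the measures nor amenability of $G$ beyond the weak$^*$-compactness of $M_G(X)$.

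One step to tighten: the covering argument as written gives $\mu(C_n)<\ep$ for every individual $\mu\in M_G(X)$, whereas the lemma asserts the strict inequality for the supremum, and a function that is everywhere strictly below $\ep$ can still have supremum equal to $\ep$. This is easily repaired: the upper semicontinuous function $\mu\mapsto\mu(C_n)$ attains its supremum on the compact set $M_G(X)$, so the supremum is itself one of the values and hence strictly less than $\ep$; alternatively, run the whole argument with the threshold $\ep'=\frac12\bigl(\ep+\sup_\mu\mu\bigl(X\setminus\bigsqcup_{i}\bigsqcup_{g\in S_i}gB_i\bigr)\bigr)<\ep$ in place of $\ep$. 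Either fix is one line, but as stated the final sentence of your compactness step does not quite deliver the claimed inequality for the supremum.
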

\begin{proof}
Set
\[
\delta=\ep -\sup_{\mu\in M_G(X)}\mu\Big(X \setminus \bigsqcup_{i=1}^m \bigsqcup_{g \in S_i} g B_i \Big)>0.
\] 
For each $i=1,\ldots,m$, set $V_i=\bigsqcup_{s\in S_i}sB_i$. Use 
\cite[Proposition~3.4]{KerSza_almost_2020} to find open subsets
$W_i\subseteq V_i$, for $i=1,\ldots,m$, so that the sets
\[U_i = \bigcup_{s\in S_i} s^{-1}(W_i\cap sB_i)\subseteq B_i, \quad i=1,\dots,m\]
satisfy the desired properties; we omit the details.
\end{proof}



We can now relate the uniform Rokhlin property for the induced action
on $\partial_eT(A)$ to the weak Rokhlin property with respect to invariant traces. Compactness of $\partial_e T(A)$ plays a crucial role
in this transfer from commutative dynamics to actions on stably finite, simple $C^*$-algebras, thanks to the results in
\cite{ozawa_dix}. Indeed, for unital $C^*$-algebras $A$ such that $T(A)$ is a non-empty
Bauer simplex, one of the consequences of this seminal work is the existence of a unital embedding
\begin{equation} \label{eq:embedding}
\theta \colon C(\partial_e T(A))\to A^\cU\cap A',
\end{equation}
such that $\tau (\theta(f)) = f(\tau)$ for every $\tau \in \partial_e T(A)$ (see \cite[Section 6]{KirchRor_centr}, particularly Proposition 6.6,
for an explicit proof of the existence of such embedding).

If $\alpha$ is an action on $A$, we denote the induced action on $C(\partial_eT(A))$ by $\alpha^{**}$. Arguing as in the proof of \cite[Proposition 8.3]{GHV}, it is possible
to check that the embedding $\theta$ from \eqref{eq:embedding} is equivariant. We summarize these facts in the following proposition.

\begin{prop}\label{thm:Ozawa}
	Let $\alpha\colon G\to \mathrm{Aut}(A)$ be an action of a countable, discrete group $G$ on a separable, unital $C^*$-algebra such that $T(A)$ is a non-empty Bauer simplex. Then there exists a unital, equivariant embedding 
	\[\theta\colon(C(\partial_e T(A)), \alpha^{**})\to (A^\cU\cap A',\alpha^\cU).\]
\end{prop}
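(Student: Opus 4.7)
The plan is to obtain the embedding from Ozawa's Dixmier-type result, and then verify equivariance through a trace computation combined with the uniqueness built into Ozawa's construction. First, I would apply \cite[Proposition~6.6]{KirchRor_centr} (which packages the relevant content of \cite{ozawa_dix}) to produce a unital $*$-homomorphism $\theta\colon C(\partial_eT(A))\to A^{\cU}\cap A'$ whose defining property is $\tau(\theta(f))=f(\tau)$ for every $\tau\in\partial_eT(A)$ and $f\in C(\partial_eT(A))$. Since $T(A)$ is a Bauer simplex, every $\tau\in T(A)$ admits a unique barycentric measure $\mu_\tau$ supported on $\partial_eT(A)$, and the defining identity extends by integration to $\tau(\theta(f))=\int_{\partial_eT(A)} f\,d\mu_\tau$ for all $\tau\in T(A)$. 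Injectivity of $\theta$ is automatic from the fact that $\partial_eT(A)$ separates points in $C(\partial_eT(A))$.

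To verify equivariance I would fix $g\in G$ and compare, for each $f\in C(\partial_eT(A))$, the two elements $\theta(\alpha^{**}_g(f))$ and $\alpha^{\cU}_g(\theta(f))$ of $A^{\cU}\cap A'$. Using that the dual action of $\alpha_g$ on $T(A)$ preserves $\partial_eT(A)$ and pushes $\mu_\tau$ forward to $\mu_{\tau\circ\alpha_g}$, a direct computation gives, for every $\tau\in T(A)$,
\[
\tau\bigl(\alpha^{\cU}_g(\theta(f))\bigr)=(\tau\circ\alpha_g)(\theta(f))=\int f\,d\mu_{\tau\circ\alpha_g}=\int \alpha^{**}_g(f)\,d\mu_\tau=\tau\bigl(\theta(\alpha^{**}_g(f))\bigr).
\]
Applying the same identity with $|f|^2$ in place of $f$ and using that $\theta$ is a homomorphism from a commutative \ca, the two elements have matching $\|\cdot\|_{2,\tau}$-seminorms for every $\tau\in T(A)$.

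The main obstacle is then upgrading this trace agreement to genuine operator equality in $A^{\cU}\cap A'$, since in general two elements of the central sequence algebra with identical tracial behavior need not coincide. I would settle it by following the strategy of \cite[Proposition~8.3]{GHV}: the construction behind Ozawa's theorem supplies $\theta(f)$ as the class of a bounded sequence $(a_n)$ in $A$ with $\lim_n\|[a_n,b]\|=0$ for every $b\in A$ and $\tau(a_n)\to f(\tau)$ uniformly in $\tau\in T(A)$. Then $(\alpha_g(a_n))$ automatically represents $\alpha^{\cU}_g(\theta(f))$, while the computation above together with the relation $\|[\alpha_g(a_n),b]\|=\|[a_n,\alpha_g^{-1}(b)]\|\to 0$ shows it satisfies the analogous characterizing conditions for $\alpha^{**}_g(f)$. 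The uniqueness modulo $J_{T(A)}$ of the resulting class inside the canonical central commutative subalgebra of $A^{\cU}$ built by Ozawa then forces $\alpha^{\cU}_g(\theta(f))=\theta(\alpha^{**}_g(f))$, yielding equivariance.
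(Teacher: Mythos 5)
Your overall skeleton --- obtain $\theta$ from \cite[Proposition~6.6]{KirchRor_centr} and then verify equivariance by showing that $\alpha_g^\cU(\theta(f))$ satisfies the property that characterizes $\theta(\alpha_g^{**}(f))$ --- is the same route the paper takes (it defers to the proof of \cite[Proposition~8.3]{GHV}). The gap is in the characterizing property you invoke at the final step. Asymptotic centrality together with $\tau(a_n)\to f(\tau)$, even uniformly in $\tau$, does \emph{not} determine the class of $(a_n)$ modulo $J_{T(A)}$: already for a UHF algebra with unique trace $\tau$ and $f=0$, a central sequence of self-adjoint trace-zero symmetries satisfies both conditions yet defines a nonzero element of $A^\cU\cap A'$ of $\|\cdot\|_{2,\tau}$-norm one. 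Your fallback --- uniqueness ``inside the canonical central commutative subalgebra built by Ozawa'' --- is circular, since it could only be applied after knowing that $\alpha_g^\cU(\theta(f))$ lies in the image of $\theta$, which is essentially the equivariance being proved. Consequently the preliminary trace computation, though correct, does not feed into a valid conclusion; as you observe yourself, matching $\|\cdot\|_{2,\tau}$-seminorms of two elements says nothing about the seminorm of their difference.

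What actually closes the gap is the \emph{stronger} property that Ozawa's construction provides: $\theta(f)$ is represented by a central sequence $(a_n)$ with $\lim_n\sup_{\tau\in\partial_eT(A)}\|a_n-f(\tau)1\|_{2,\tau}=0$, i.e.\ $\theta(f)$ is, uniformly over the compact extreme boundary, the scalar $f(\tau)$ in the fibre over $\tau$. This property does pin down the class: if $(a_n)$ and $(a_n')$ both satisfy it, then $\sup_{\tau\in\partial_eT(A)}\|a_n-a_n'\|_{2,\tau}\to 0$, and since $\|a\|_{2,T(A)}=\|a\|_{2,\partial_eT(A)}$ for $a\in A$ by the barycentric decomposition, the two classes coincide in $A^\cU$. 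It is also manifestly preserved by the action: $\|\alpha_g(a_n)-f(\tau\circ\alpha_g)1\|_{2,\tau}=\|a_n-f(\tau\circ\alpha_g)1\|_{2,\tau\circ\alpha_g}$, and since the dual action permutes $\partial_eT(A)$, taking the supremum over $\tau\in\partial_eT(A)$ merely reindexes the supremum; as $(\alpha_g^{**}f)(\tau)=f(\tau\circ\alpha_g)$ (up to the paper's convention for the dual action), this shows that $(\alpha_g(a_n))$ satisfies the characterizing property of $\theta(\alpha_g^{**}(f))$ and hence $\alpha_g^\cU(\theta(f))=\theta(\alpha_g^{**}(f))$. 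This uniform fibrewise statement is the content of the argument in \cite[Proposition~8.3]{GHV} that the paper cites, and your proof needs it in place of the trace-value uniqueness claim.
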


\begin{prop}\label{prop:uRpiwRp}
Let $G$ be a countable, discrete, amenable group, let $A$ be a unital, separable $C^*$-algebra such that
$\partial_eT(A)$ is compact,
and let $\alpha\colon G\to \Aut(A)$ be an action such that the induced action $G\curvearrowright \partial_eT(A)$
has the uniform Rokhlin property. Then $\alpha$ has the weak Rokhlin property with respect to invariant traces. 
\end{prop}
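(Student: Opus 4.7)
The plan is to transfer the open castle coming from the uniform Rokhlin property on $\partial_e T(A)$ into $A^\cU \cap A'$ via Ozawa's equivariant embedding in \autoref{thm:Ozawa}; this is legitimate because compactness of $\partial_e T(A)$ forces the Choquet simplex $T(A)$ to be Bauer. First I would fix a finite subset $K\subseteq G$ and $\ep>0$ and apply the uniform Rokhlin property to $G\curvearrowright\partial_e T(A)$ with parameters $(K,\ep)$. This yields an open castle $\{(S_i,B_i)\}_{i=1}^m$ whose shapes $S_i$ are $(K,\ep)$-invariant, and such that $\mu(\partial_e T(A)\setminus\bigsqcup_{i}\bigsqcup_{g\in S_i} gB_i)<\ep$ for every $\mu\in M_G(\partial_e T(A))$. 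Two successive applications of \autoref{lemma:URPclosed} would produce nested open castles $\{(S_i,V_i)\}$ and $\{(S_i,U_i)\}$ with $\overline{V_i}\subseteq U_i$, $\overline{U_i}\subseteq B_i$, pairwise disjoint closed translates $g\overline{U_i}$, and still $\mu(\partial_e T(A)\setminus\bigsqcup_{i,g} gV_i)<\ep$. Urysohn's lemma then produces continuous functions $f_i\colon\partial_e T(A)\to[0,1]$ with $f_i\equiv 1$ on $\overline{V_i}$ and $\supp(f_i)\subseteq U_i$.

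With $\theta$ denoting the equivariant unital $*$-embedding from \autoref{thm:Ozawa}, I would set $a_{g,i}\coloneqq \alpha_g^\cU(\theta(f_i))$ for $i=1,\dots,m$ and $g\in S_i$. Since $\theta$ takes values in $A^\cU\cap A'$ and this subalgebra is $\alpha^\cU$-invariant (as one checks directly by commuting $\alpha_g^\cU(x)$ past $b\in A$ via $\alpha_g^\cU(\alpha_{g^{-1}}(b)x)$), each $a_{g,i}$ is a positive contraction in $A^\cU\cap A'$. The equivariance $\alpha_{g'}^\cU(a_{h,i})=a_{g'h,i}$ whenever $g'h\in S_i$ is immediate from the definition. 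Pairwise orthogonality of the family $\{a_{g,i}\}$ follows because $\theta$ is a $*$-homomorphism and, by equivariance, $\supp(\alpha_g^{**}(f_i))\subseteq g\overline{U_i}$, and the latter sets are pairwise disjoint for $(g,i)\neq (h,j)$ with $g\in S_i$, $h\in S_j$.

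For the tracial estimate I would use that $T(A)$ being a Bauer simplex makes every $\alpha$-invariant trace $\tau$ the barycenter of a unique $\alpha$-invariant Borel probability measure $\mu_\tau$ on $\partial_e T(A)$. Together with the defining identity $\sigma(\theta(f))=f(\sigma)$ for $\sigma\in\partial_e T(A)$, this yields $\tau(\theta(f))=\int f\,d\mu_\tau$ for all $f\in C(\partial_e T(A))$. Since $\alpha_g^{**}(f_i)\geq \chi_{gV_i}$ pointwise (where $\chi$ denotes the characteristic function), and the sets $gV_i$ are pairwise disjoint, $\alpha^{**}$-invariance of $\mu_\tau$ gives
\[
\tau\Bigl(\sum_{i,g}a_{g,i}\Bigr)
=\int\sum_{i,g}\alpha_g^{**}(f_i)\,d\mu_\tau
\geq \mu_\tau\Bigl(\bigsqcup_{i,g}gV_i\Bigr)>1-\ep,
\]
which is the required bound.

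The subtle point is reconciling two opposing demands on the castle refinement: orthogonality of the $a_{g,i}$ requires pairwise disjointness of the \emph{closed} translates $g\overline{U_i}$, while tracial fullness requires the measure-theoretic bulk to be carried by \emph{open} translates of the levels. The nested sandwich $\overline{V_i}\subseteq U_i\subseteq \overline{U_i}\subseteq B_i$ produced by iterating \autoref{lemma:URPclosed} is precisely what allows a Urysohn bump function $f_i$ with $\chi_{V_i}\leq f_i\leq \chi_{\overline{U_i}}$, so that both disjointness (from the outer closures) and tracial smallness (from the inner sets) can be extracted simultaneously.
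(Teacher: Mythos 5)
Your construction follows the paper's proof almost exactly: pull back an open castle from the uniform Rokhlin property on $\partial_e T(A)$, shrink it using \autoref{lemma:URPclosed}, build Urysohn bump functions, and push their translates into $A^\cU\cap A'$ through the equivariant embedding of \autoref{thm:Ozawa}. One cosmetic remark before the substantive one: the second application of \autoref{lemma:URPclosed} is superfluous. Orthogonality of the $a_{g,i}$ only needs the translated bumps to be supported in the pairwise disjoint \emph{open} sets $gB_i$ (two functions vanishing outside disjoint open sets already multiply to zero), so a single shrinking $\overline{B_i'}\subseteq B_i$ with $f_i\equiv 1$ on $\overline{B_i'}$ and $\supp(f_i)\subseteq B_i$ suffices; this is what the paper does.

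The one genuine gap is in the tracial estimate. The identity $\tau(\theta(f))=\int f\,d\mu_\tau$, with $\mu_\tau$ the barycentric measure of $\tau$, does not follow from the premises you cite. The barycenter formula $\tau(a)=\int_{\partial_e T(A)}\sigma(a)\,d\mu_\tau(\sigma)$ holds for $a\in A$, but $\theta(f)$ lives in $A^\cU$, where $\tau$ is extended by an ultralimit, and pointwise ultralimits of bounded sequences of functions do not in general commute with integration (the integrand $\sigma\mapsto\lim_{n\to\cU}\sigma(a_n)$ need not even be measurable); so knowing $\sigma(\theta(f))=f(\sigma)$ for extreme $\sigma$ plus uniqueness of representing measures does not yield the formula for non-extreme invariant $\tau$. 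Fortunately you do not need it: let $\mu$ be the Radon probability measure representing the state $\tau\circ\theta$ on $C(\partial_e T(A))$. It is $G$-invariant because the extension of $\tau$ to $A^\cU$ is $\alpha^\cU$-invariant and $\theta$ is equivariant, and the uniform Rokhlin estimate is quantified over \emph{all} invariant measures, so it applies to this $\mu$ whether or not it coincides with $\mu_\tau$. With that substitution your computation $\tau\bigl(\sum_{i,g}a_{g,i}\bigr)=\int\sum_{i,g}(g\cdot f_i)\,d\mu\geq\mu\bigl(\bigsqcup_{i,g}gB_i'\bigr)>1-\ep$ closes, and this is exactly how the paper argues.
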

\begin{proof}
Let $\ep>0$ and let $K\subseteq G$ be a finite subset.
We write $X$ for $\partial_eT(A)$ throughout.
Use the uniform Rokhlin property for $G\curvearrowright X$ to find an open castle $\{(S_i, B_i)\}_{i=1}^m$ with $(K, \ep)$-invariant shapes, such that for every $\mu \in M_G(X)$ we have
\begin{equation*}
\mu\Big( \bigsqcup\limits_{i=1}^m \bigsqcup\limits_{s_i \in S_i} s_iB_i\Big) > 1-\ep.
\end{equation*}
Moreover, by \autoref{lemma:URPclosed}, we can assume that there are open sets $(B'_i)_{i=1}^m$ such that $\overline{B'_i}\subseteq B_i$, and
for all $\mu \in M_G(X)$ we have
\begin{equation} \label{eq:measure}
\mu\Big( \bigsqcup\limits_{i=1}^m \bigsqcup\limits_{g \in S_i} gB_i'\Big) > 1-\ep.
\end{equation}
	
For $i=1,\ldots,m$, let $f_i\colon X\to [0,1]$
be a continuous function such that $f_i \equiv 1$ on $\overline{B'_i}$ and $\mathrm{supp}(f_i) \subseteq B_i$. For $g\in S_i$, set $f_{g,i}=g\cdot f_i$. 
Then $\mathrm{supp}(f_{g,i})\subseteq g B_i$ and therefore the functions $f_{g,i}$ are positive contractions with pairwise 
disjoint supports.
By \autoref{thm:Ozawa} there exists a
unital, equivariant embedding 
$\theta\colon C(\partial_eT(A))\to A^\U\cap A'$.
We claim that the positive contractions $\theta(f_{g,i})$ satisfy the conditions in \autoref{df:iwRp}. 

Condition \eqref{item:wrp1} is satisfied by construction. In order to 
check \eqref{item:wrp2}, let $\tau\in T(A)^\alpha$ and denote by 
$\mu\in M_G(X)$ the measure that $\tau$ induces on 
$C(\partial_eT(A))$ via the equivariant embedding $\theta$.
Set $f=\sum_{i=1}^m\sum_{g\in S_i}f_{g,i}\in C(\partial_eT(A))$. 
Then 
\begin{align*}
\tau(\theta(f))\geq\mu(\{x\in X\colon f(x)=1\}) &\ge \mu\Big(\bigsqcup_{i=1}^m \bigsqcup_{g \in S_i} g B_i'\Big) \stackrel{\mathclap{\eqref{eq:measure}}}{>} \ 1 - \ep.
\end{align*}
This shows that $\tau(1-\theta(f))<\ep$, as desired.	
\end{proof}

Combining the proposition above with \autoref{cor:iwRp}, we obtain:

\begin{cor}\label{cor:uRp}
Let $G$ be a countable, discrete, amenable group, let $A$ be a simple,
separable, unital, nuclear, $\mathcal{Z}$-stable
\cstar-algebra such that $T(A)$ is a non-empty Bauer simplex, and 
let $\alpha \colon G \to \mathrm{Aut}(A)$ be an action such
that $G \curvearrowright \partial_e T(A)$ has the uniform Rokhlin property. Then $A \rtimes_\alpha G$ is a simple,
separable, unital, nuclear, $\mathcal{Z}$-stable
$C^*$-algebra. 
\end{cor}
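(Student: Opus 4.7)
The plan is to obtain this corollary as a direct combination of \autoref{prop:uRpiwRp} and \autoref{cor:iwRp}, both of which have been established earlier. In essence, the uniform Rokhlin property on the extreme boundary transfers into the weak Rokhlin property with respect to invariant traces on the $C^*$-algebra level, and we have already shown that the latter, together with $\mathcal{Z}$-stability of $A$, forces $\mathcal{Z}$-stability of the crossed product.

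More precisely, I would first observe that since $T(A)$ is a Bauer simplex, the extreme boundary $\partial_e T(A)$ is compact, which is exactly the standing hypothesis in \autoref{prop:uRpiwRp}. Applying that proposition to the induced action $G \curvearrowright \partial_e T(A)$, which has the uniform Rokhlin property by assumption, yields that $\alpha$ has the weak Rokhlin property with respect to invariant traces. This is the only nontrivial step in the argument, and the work has already been carried out in \autoref{prop:uRpiwRp} using Ozawa's equivariant embedding $C(\partial_e T(A)) \hookrightarrow A^\cU \cap A'$ provided by \autoref{thm:Ozawa}.

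Next, I would verify the remaining hypotheses of \autoref{cor:iwRp}. The algebra $A$ is simple, separable, unital, nuclear, and $\mathcal{Z}$-stable by assumption, and it is stably finite since $T(A)$ is non-empty and $A$ is unital. Hence \autoref{cor:iwRp} applies and gives that $A \rtimes_\alpha G$ is a simple, separable, unital, nuclear, $\mathcal{Z}$-stable $C^*$-algebra, as required. I do not foresee any obstacle beyond assembling these ingredients, since the heavy lifting (the averaging argument that passes from uniform Rokhlin on $\partial_e T(A)$ to McDuffness with respect to invariant traces, and then to $\mathcal{Z}$-stability of the crossed product via weak dynamical comparison) is already packaged into \autoref{prop:uRpiwRp}, \autoref{lemma:cpcoz}, and \autoref{thm:main2}.
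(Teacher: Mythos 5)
Your proof is correct and follows exactly the paper's route: the paper derives this corollary by combining \autoref{prop:uRpiwRp} with \autoref{cor:iwRp}, precisely as you do. Your additional remarks (compactness of $\partial_e T(A)$ from the Bauer simplex hypothesis, and stable finiteness of $A$ from the existence of a trace) correctly fill in the hypotheses left implicit in the paper's one-line proof.
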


\subsection{Finitely many extreme invariant traces}
In this part we consider the last class of examples covered by \autoref{thm:main2}, namely those dynamical systems as in Question
\ref{qstA} for which the set of invariant traces is a finite-dimensional simplex. The interesting part of \autoref{thm:main2.}, when compared to the results in \cite{GHV, Wouters},
is the lack of topological assumptions on $T(A)$, which are compensated by the strong requirements on $T(A)^\alpha$.
\begin{cor}\label{thm:main2.}
	Let $G$ be a countable, discrete, amenable group, let $A$ be a simple, separable, unital, nuclear $\mathcal{Z}$-stable
	\cstar-algebra, and let $\alpha \colon G \to\mathrm{Aut}(A)$ be an action such that the set of invariant traces is a finite-dimensional simplex. Then $(A,\alpha)$ has the McDuff property with respect to invariant traces. In particular,
	if $\alpha$ is also outer, then 
	$A \rtimes_\alpha G$ is $\mathcal{Z}$-stable.
\end{cor}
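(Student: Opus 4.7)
\emph{Plan.} The \emph{in particular} statement about $\mathcal{Z}$-stability of $A\rtimes_\alpha G$ is immediate from the McDuff property with respect to invariant traces: it combines with outerness of $\alpha$ and the remaining standing hypotheses to yield $\mathcal{Z}$-stability via \autoref{thm:main2}. So the real content is the verification that $(A,\alpha)$ has the McDuff property with respect to invariant traces.

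Enumerate the extreme points of the finite-dimensional simplex $T(A)^\alpha$ as $\tau_1,\dots,\tau_n$. Every invariant trace is a convex combination of these, so $\|\cdot\|_{2,T(A)^\alpha}$ is equivalent to $\max_i\|\cdot\|_{2,\tau_i}$ and therefore $J_{T(A)^\alpha} = \bigcap_{i=1}^n J_{\tau_i}$. Producing a unital $*$-homomorphism $M_d\to(A_\cU\cap A')^{\alpha_\cU}/J_{T(A)^\alpha}$ then reduces to constructing a completely positive contractive order zero map $\varphi\colon M_d\to(A_\cU\cap A')^{\alpha_\cU}$ with $1-\varphi(1)\in J_{T(A)^\alpha}$, since any such map that becomes unital in a $C^*$-algebraic quotient is automatically a $*$-homomorphism there (via the standard structure $\varphi(a)=\varphi(1)\pi(a)$).

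I would construct $\varphi$ by working at each extreme invariant trace separately. For fixed $i$, consider the finite tracial von Neumann algebra $M_i=\pi_{\tau_i}(A)''$ with the induced trace-preserving action $\alpha^{(i)}$ of $G$. Two properties follow from the assumptions. First, $\mathcal{Z}$-stability of $A$ makes $M_i$ McDuff: the embedding $\mathcal{Z}\hookrightarrow A^\cU\cap A'$ coming from $A\cong A\otimes\mathcal{Z}$ descends, via $\pi_{\tau_i}$, to an embedding of the hyperfinite $\mathrm{II}_1$ factor $R$ (the GNS completion of $\mathcal{Z}$ at its unique trace) into $M_i^\omega\cap M_i'$. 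Second, extremality of $\tau_i$ in $T(A)^\alpha$ forces $\alpha^{(i)}$ to act ergodically on the center $Z(M_i)$, as any non-trivial $G$-invariant central projection would split $\tau_i$ into a non-trivial convex combination of distinct invariant traces. Classical Ocneanu--Connes-type results on amenable group actions on McDuff finite tracial von Neumann algebras whose center is ergodic then provide, for every $d$, a unital $*$-homomorphism $\rho_i\colon M_d\to M_i^\omega\cap M_i'$ whose image lies in the fixed points of the induced $G$-action.

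The final step assembles the $\rho_i$ into one $C^*$-algebraic map. Each $\rho_i$ lifts to a sequence of approximately $A$-central, approximately $\alpha$-invariant completely positive contractive order zero maps $M_d\to A$ whose units tend to $1$ in $\|\cdot\|_{2,\tau_i}$, using standard projectivity of cones and \autoref{lemma:surjective} to pass through the fixed-point quotient of the ultrapower. A diagonal argument across the finite index set $\{1,\dots,n\}$ extracts a single sequence simultaneously good at every $\tau_i$, producing $\varphi$ with $1-\varphi(1)\in\bigcap_i J_{\tau_i}=J_{T(A)^\alpha}$. The main technical obstacle is precisely this gluing step: it requires delicate compatibility between the vN-level embeddings at the different $\tau_i$, and the essential input is the finiteness of $n$, which bounds the number of tracial conditions to be met simultaneously and makes the diagonalization possible.
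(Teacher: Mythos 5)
Your overall strategy --- pass to the GNS von Neumann algebras of invariant traces, use the classification of amenable group actions on hyperfinite finite von Neumann algebras to produce equivariant unital embeddings of $M_d$ into central sequence algebras, and pull these back to the $C^*$-level --- is the same as the paper's. The preliminary reductions are fine: $\|\cdot\|_{2,T(A)^\alpha}=\max_i\|\cdot\|_{2,\tau_i}$, hence $J_{T(A)^\alpha}=\bigcap_i J_{\tau_i}$, and an order zero map that becomes unital in a quotient is automatically multiplicative there. Two smaller imprecisions: the input needed for the von Neumann--level step is hyperfiniteness of $\pi_{\tau_i}(A)''$, which comes from nuclearity of $A$ rather than from McDuffness together with ergodicity on the center (the classical Connes--Ocneanu/Sutherland--Takesaki machinery is stated for injective algebras); and the ergodicity observation, while correct, is not needed to absorb the \emph{trivial} action on $\mathcal{R}$.

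The genuine gap is the assembly step. Having produced, for each $i$, a sequence of approximately central, approximately invariant completely positive contractive order zero maps $M_d\to A$ whose units tend to $1$ only in $\|\cdot\|_{2,\tau_i}$, a ``diagonal argument across the finite index set'' cannot merge these $n$ different sequences into one sequence whose units tend to $1$ in every $\|\cdot\|_{2,\tau_i}$ simultaneously: diagonalization selects terms from given sequences, it does not combine maps, and a map whose unit is close to $1$ at $\tau_1$ may be far from $1$ at $\tau_2$. What is needed is a patching mechanism using approximately central, invariant elements that separate the $\tau_i$; you correctly flag this as the main obstacle, but you do not supply it. The paper sidesteps the problem entirely: it sets $\sigma=\frac1n\sum_i\tau_i$, observes that $J_{T(A)^\alpha}=J_\sigma$, and identifies $A_\cU/J_{T(A)^\alpha}$ with the ultrapower $\mathcal{N}^\cU$ of the single hyperfinite finite von Neumann algebra $\mathcal{N}=\pi_\sigma(A)''$; the Sutherland--Takesaki classification then gives that $\alpha_\sigma$ absorbs the identity action on $\mathcal{R}$, and \cite[Proposition~5.3]{GHV} converts this into a unital embedding $M_d\to(\mathcal{N}^\cU\cap\mathcal{N}')^{\alpha_\sigma^\cU}$ in one stroke, with no gluing over the extreme points. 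If you insist on your trace-by-trace setup, the correct gluing takes place at the von Neumann level, where one would argue that $\mathcal{N}\cong\bigoplus_i\pi_{\tau_i}(A)''$ (distinct extreme invariant traces having mutually singular central measures) and take the direct sum of the embeddings $\rho_i$ against the central projections of $\mathcal{N}$ --- another point that would need justification.
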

\begin{proof}
Suppose that $\partial_e(T(A)^\alpha) = \{\tau_1,\ldots,\tau_n\}$, and let $\sigma=\frac{1}{n}\sum\limits_{i=1}^{n}\tau_i$. Similar argument as in \cite[Proposition~2.3]{GHV} shows that $J_{T(A)^{\alpha}}= J_\sigma$, and thus the quotient $A_\cU / J_{T(A)^\alpha}$ is isomorphic to the ultrapower $\mathcal{N}^\cU$ of the tracial von Neumann algebra $\mathcal{N} := \pi_\sigma(A)''$, where $\pi_\sigma$ is the GNS representation
associated to $\sigma$. The action $\alpha$ canonically extends to an action $\alpha_\sigma$ on $\mathcal{N}$, which
is an hyperfinite type II$_1$ von Neumann algebra, since $A$ is nuclear. By the classification of actions of discrete amenable groups on semifinite, hyperfinite von Neumann algebras in \cite{ST} (see also \cite[Section 3]{sut} and \cite[Theorem 5.4]{GHV}), the action
$\alpha_\sigma$ absorbs the identity action on the hyperfinite II$_1$ factor $\mathcal{R}$. By \cite[Proposition 5.3]{GHV},
this is enough in order to grant the existence of a unital embedding $M_d \to (\mathcal{N}^\cU \cap \mathcal{N}')^{\alpha_\sigma^\cU}$, for every $d\geq 2$.
In other words, the dynamical system $(A,\alpha)$ has the uniform McDuff property with respect to invariant traces. 
\end{proof}

\providecommand{\bysame}{\leavevmode\hbox to3em{\hrulefill}\thinspace}
\providecommand{\MR}{\relax\ifhmode\unskip\space\fi MR }
\providecommand{\MRhref}[2]{%
  \href{http://www.ams.org/mathscinet-getitem?mr=#1}{#2}
}
\providecommand{\href}[2]{#2}

\end{document}